\newtheorem{thm}{Theorem}[section]
\newtheorem{cor}[thm]{Corollary}
\newtheorem{lem}[thm]{Lemma}
\newtheorem{prop}[thm]{Proposition}
\newtheorem{rem}[thm]{Remark}
\newtheorem{exam}[thm]{Example}
\numberwithin{equation}{section}
\begin{document}
\title{\bf \Large A characterization of Conway-Coxeter friezes of zigzag type by rational links}
\author{
{Takeyoshi Kogiso}\\
{\footnotesize  Department of Mathematics, Josai University, }\\
{\footnotesize  1-1, Keyakidai Sakado, Saitama, 350-0295, Japan}\\
{\footnotesize  E-mail address: kogiso@josai.ac.jp}\\  
\\ 
{Michihisa Wakui}\\ 
{\footnotesize  Department of Mathematics, Faculty of Engineering Science,}\\ 
{\footnotesize  Kansai University, Suita-shi, Osaka 564-8680, Japan}\\
{\footnotesize  E-mail address: wakui@kansai-u.ac.jp}\\ 
}
%\date{August 21, 2020}

\maketitle 
\begin{abstract}
The present paper show that Conway-Coxeter friezes of zigzag type are characterized by (unoriented) rational links. 
As an application of this characterization Jones polynomial can be defined for Conway-Coxeter friezes of zigzag type. 
This gives a new method for computing the Jones polynomial for oriented rational links. 
\end{abstract}

\baselineskip 16pt 
In the precedent research \cite{Kogiso-Wakui} the authors found that some beautiful relation between frieze patterns due to Conway and Coxeter \cite{Coxeter, CoCo1, CoCo2} and rational link diagrams. 
In that paper it is shown that Conway-Coxeter friezes of zigzag type are determined by $4$-tuple of rational numbers which are related by $\frac{p}{M}, \frac{q}{M}, \frac{r}{M}, \frac{s}{M}$, where $p, q, r, s, M$ are positive integers such that $p+q=M=r+s$. 
It is noteworthy that the numerators $p, q, r, s$ are located around $M$ in the corresponding Conway-Coxeter frieze to $\frac{p}{M}$.  
Moreover, it is shown in \cite{Kogiso-Wakui} that the Kauffman bracket polynomials of the rational link diagrams corresponding to $\frac{p}{M}, \frac{q}{M}, \frac{r}{M}, \frac{s}{M}$ coincide up to replacing $A$ with $A^{-1}$. 
This implies that the Kauffman bracket polynomials have meaning for Conway-Coxeter friezes of zigzag type. 
In the present paper we develop this consideration, and show that Conway-Coxeter friezes of zigzag type are characterized by (unoriented) rational links. 
We derive this characterization by reformulating the classification result on (unoriented) rational links by Schubert \cite{Schubert} 
in terms of the operations $i(\frac{p}{M})=\frac{q}{M},\ r(\frac{p}{M})=\frac{r}{M},\ (ir)(\frac{p}{M})=\frac{s}{M}$, which are introduced in \cite{Kogiso-Wakui}. 
\par 
Any non-zero rational number is classified into three types such as $\frac{0}{1}, \frac{1}{0}, \frac{1}{1}$, that are determined by the parities of its numerator and denominator. 
We study in detail on types of non-zero rational numbers by language of Farey sums and continued fraction expansions. 
By using this result and applying the above characterization of Conway-Coxeter friezes of zigzag type, 
we introduce Jones polynomials for the Conway-Coxeter friezes of zigzag type. 
The key is to know difference between modified writhes of the four rational link diagrams corresponding to $\frac{p}{M}, \frac{q}{M}, \frac{r}{M}, \frac{s}{M}$. 
Recently, Nagai and Terashima \cite{Nagai_Terashima} found a combinatorial formula to compute the writhes for oriented rational link diagrams  in terms of continued fraction expansion.  
Their formula is described by some sign sequence determined from ancestor triangles of rational numbers, 
which are introduced by Hatcher and Ortel~\cite{HO} or S. Yamada~\cite{Yamada-Proceeding} in different backgrounds. 
We give a recursive formula for computing the sign sequence without geometric picture. 
Thus, the Jones polynomials for the Conway-Coxeter friezes of zigzag type can be computed from continued fraction expansions of rational numbers in a completely combinatorial way. 
\par
This also gives a new method for computing the Jones polynomial for oriented rational links. 
Kyungyon Lee and Ralf Schiffler \cite{LeeSchiffler} gave an interesting formula to express Jones polynomials for rational links as specializations for cluster variables by using snake graphs. 
On the other hand, Sophie Morier-Genoud and Valentin Ovsienko~\cite{M-GO2} introduced a notion of $q$-deformed rational numbers and $q$-deformed continued fractions and applied to calculate normalized $q$-Jones polynomials as other approach of Lee and Schiffler's result.
Our formula on the Jones polynomials for the Conway-Coxeter friezes of zigzag type also gives a new method for computing the Jones polynomial for oriented rational links. 
\par 
The present paper is organized as follows. 
In Section 1 we recall some notations of continued fraction expansions for rational numbers, Farey sums and $LR$ words. 
Three important operations $i, r, ir$ on $LR$ words or equivalently on the rational numbers in the open interval $(0,1)$ are introduced. 
In Section 2 we briefly explain the definition of the Conway-Coxeter friezes of zigzag type, and effect of the operations $i,r, ir$ on such friezes. 
In Section 3 we show that any Conway-Coxeter frieze of zigzag type can be regarded as an unoriented rational link. 
In Section 4, as an application of the result in Section 3, ``Jones polynomial" for the Conway-Coxeter friezes of zigzag type can be defined. 
Furthermore, it is observed that 
each pair of rational knots such that their Jones polynomials are the same up to replacing $t$ with $t^{-1}$ 
has some common characteristic, which would be considered as a new phenomena. 
In the final section we derive a recurrence formula for computing the sign sequence which is used in the writhe formula of a rational link diagram due to Nagai and Terashima~\cite{Nagai_Terashima}. 
\par 
Throughout of the present paper, $\mathbb{N}$ denotes the set of positive integers. 
On (rational) tangles, knots and links and their diagrams we refer the reader to Cromwell's Book~\cite{Cromwell} and Murasugi's Book~\cite{Murasugi}. 

\section{Continued fraction expansions, Farey sums and $LR$ words}
\par 
In the present paper, the denominator $q$ of any irreducible fraction $\frac{p}{q}$ is always assumed to be $q\geq 0$, and if $q=0$, then $p=1$. 
\par 
Two irreducible fractions $\frac{p}{q}$ and $\frac{r}{s}$ are said to be \textit{Farey neighbors} if they satisfy $qr-ps=1$. 
Then $\frac{p}{q}<\frac{r}{s}$ holds, 
$\frac{p}{q}\sharp \frac{r}{s}:=\frac{p+r}{q+s}$ 
is also irreducible, and both $\frac{p}{q}, \frac{p}{q}\sharp \frac{r}{s}$ and $\frac{p}{q}\sharp \frac{r}{s}, \frac{r}{s}$ are Farey neighbors, again. 
It is well-known that $\frac{p}{q}\sharp \frac{r}{s}$ is the unique fraction that the absolute values of the numerator and the denominator are minimum between the numerators and the denominators of irreducible fractions in the open interval $\bigl(\frac{p}{q}, \frac{r}{s}\bigr)$, respectively. 
It can be also verified that for any nonzero rational number $\alpha $, there is a unique pair $\bigl(\frac{p}{q}, \frac{r}{s}\bigr)$ of Farey neighbors which satisfies $\alpha =\frac{p}{q}\sharp \frac{r}{s}$. 
The pair $\bigl(\frac{p}{q}, \frac{r}{s}\bigr)$ is called the parents of $\alpha $, and $\alpha $ is called the mediant of $\bigl(\frac{p}{q}, \frac{r}{s}\bigr)$. 
\par 
There is a one-to-one correspondence between the rational numbers in the open interval $(0,1)$ and the $LR$ words as explained below. 
We denote the corresponding $LR$ word by $w(\alpha )$ for a rational number $\alpha \in (0,1)$. 
Then, the function $w(\alpha )$ is given by the following recurrence formula \cite[Lemma 3.2]{Kogiso-Wakui}: 
\begin{enumerate}
\item[$\bullet$] $w\bigl(\frac{1}{2}\bigr)=\emptyset $, 
\item[$\bullet$] $w\bigl(\frac{p}{q}\sharp \frac{r}{s}\bigr)=\begin{cases}
Lw\bigl(\frac{r}{s}\bigr) & \text{if}\ q<s, \\ 
Rw\bigl(\frac{p}{q}\bigr) & \text{if}\ q>s. 
\end{cases}$
\end{enumerate}

\par \smallskip 
\begin{exam}
\begin{enumerate}
\item[$(1)$] $w\bigl(\frac{1}{3}\bigr)
=w\bigl(\frac{0}{1}\sharp \frac{1}{2}\bigr)
=Lw\bigl(\frac{1}{2}\bigr) =L$ and 
$w\bigl(\frac{1}{4}\bigr)
=w\bigl(\frac{0}{1}\sharp \frac{1}{3}\bigr)
=Lw\bigl(\frac{1}{3}\bigr) =L^2$. 
In general, $w(\frac{1}{n})
=L^{n-2}$ for an integer $n \geq 3$. 
\par 
\item[$(2)$] $w\bigl(\frac{2}{3}\bigr)
=w\bigl(\frac{1}{2}\sharp \frac{1}{1}\bigr)
=Rw\bigl(\frac{1}{2}\bigr) =R$ and 
$w\bigl(\frac{3}{4}\bigr)
=w\bigl(\frac{1}{2}\sharp \frac{2}{3}\bigr)
=Rw\bigl(\frac{2}{3}\bigr) =R^2$. 
In general, $w\bigl(\frac{n-1}{n}\bigr) 
=R^{n-2}$ for an integer $n \geq 3$. 
\end{enumerate}
\end{exam}

\begin{rem}
As explained in \cite{Kogiso-Wakui}, the above correspondence can be visualized by using the Stern-Brocot tree as follows. 
Let $\alpha \in \mathbb{Q}\cap (0,1)$. 
In the Stern-Brocot tree, starting from the vertex $\frac{1}{2}$ we record $L$ or $R$ according to the left down or right down until reaching 
the vertex $\alpha $ along edges, and arrange the sequence of $L$ and $R$ in the direction from right to left. 
This sequence coincides with $w(\alpha )$. 
\end{rem} 

\par \smallskip 
For an $LR$ word $w$ we denote by $i(w)$ the word obtained by changing $L$ with $R$, and by $r(w)$ the word obtained by reversing the order. We also consider the word $(ir)(w)$ obtained by composing $i$ and $r$. 
Then we have the following. 

\par \smallskip 
\begin{lem}\label{1-2}
Let $\alpha =\frac{p}{q}\in \mathbb{Q}\cap (0,1)$ be an irreducible fraction, and $\bigl( \frac{x}{r}, \frac{y}{s}\bigr)$ be the pair of parents of $\alpha $. 
If $w=w\bigl( \frac{p}{q}\bigr)$, 
then 
\begin{enumerate}
\item[$(1)$] $i(w)= w\bigl( \frac{q-p}{q}\bigr)$, 
\item[$(2)$] $r(w)= w\bigl( \frac{r}{q}\bigr)$, 
\item[$(3)$] $(ir)(w)= w\bigl( \frac{s}{q}\bigr)$. 
\end{enumerate}

Based on the above result, $i(\alpha ), r(\alpha ), (ir)(\alpha )$ are defined by 
$i(\alpha )=\frac{q-p}{q}, 
r(\alpha )=\frac{r}{q}, 
(ir)(\alpha )=\frac{s}{q}$, respectively. 
\end{lem}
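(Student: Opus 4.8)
The plan is to linearize the $LR$-calculus with $2\times2$ integer matrices, under which $i$, $r$, $ir$ become elementary matrix manipulations, and then simply read off the resulting mediants.

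First I set up a dictionary. To each $\alpha\in\mathbb{Q}\cap(0,1)$ with parents $\bigl(\tfrac{x}{r},\tfrac{y}{s}\bigr)$ I attach the matrix $M(\alpha)=\begin{pmatrix} x & y\\ r & s\end{pmatrix}$, so that $\det M(\alpha)=xs-yr=-1$ (the Farey-neighbor condition), the two columns are the parents, and $\alpha=\tfrac{x+y}{r+s}=\tfrac{p}{q}$ is recovered as the ratio of the two row sums. Writing $U=\begin{pmatrix}1&1\\0&1\end{pmatrix}$, $V=\begin{pmatrix}1&0\\1&1\end{pmatrix}$ and $M_0=\begin{pmatrix}0&1\\1&1\end{pmatrix}$ (the matrix of $\tfrac12$, whose parents are $\tfrac01,\tfrac11$), a one-step induction on the Stern--Brocot tree shows that passing to the left (resp.\ right) child corresponds to right multiplication by $U$ (resp.\ $V$). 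Hence, if the traversal from $\tfrac12$ to $\alpha$ spells $c_1\cdots c_n\in\{L,R\}^n$, then $M(\alpha)=M_0X_1\cdots X_n$ with $X_k=U$ for $c_k=L$ and $X_k=V$ for $c_k=R$; by the Remark, $w(\alpha)=c_n\cdots c_1$ is the reverse of this traversal.

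Now each operation becomes algebra. Since $i$ interchanges $L,R$ without reversing, and $V=JUJ$, $U=JVJ$ for the swap $J=\begin{pmatrix}0&1\\1&0\end{pmatrix}$, the rational $\delta$ with $w(\delta)=i(w(\alpha))$ satisfies $M(\delta)=M_0J(X_1\cdots X_n)J=(M_0JM_0^{-1})\,M(\alpha)\,J$; a direct $2\times2$ computation gives $M(\delta)=\begin{pmatrix} s-y & r-x\\ s & r\end{pmatrix}$, whose mediant is $\tfrac{(r+s)-(x+y)}{r+s}=\tfrac{q-p}{q}$, proving (1). For (3) note $U^{\mathsf T}=V$, so applying $ir$ (reverse and interchange) replaces $X_1\cdots X_n$ by its transpose; using $M_0^{\mathsf T}=M_0$ I get $M(\beta)=M_0\,(X_1\cdots X_n)^{\mathsf T}=M_0\,M(\alpha)^{\mathsf T}M_0^{-1}=\begin{pmatrix} s-y & y\\ q-p & p\end{pmatrix}$, with mediant $\tfrac{s}{q}$, which is (3). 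Finally (2) is immediate: as word operations $r=i\circ(ir)$, so $r(w)=i\bigl((ir)(w)\bigr)=i\bigl(w(\tfrac{s}{q})\bigr)=w(1-\tfrac{s}{q})=w(\tfrac{q-s}{q})=w(\tfrac{r}{q})$, using $r+s=q$ and part (1).

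The main obstacle is not any single computation but fixing the dictionary so that the bookkeeping is airtight: one must verify that $L,R$ correspond to right multiplication by $U,V$ in the stated orientation, that $w(\alpha)$ really is the reverse of the traversal word, that $M_0$ is symmetric (this is what makes the transpose in (3) clean), and---crucially---that every matrix of the form $M_0X_{\sigma(1)}\cdots X_{\sigma(n)}$ arising here is again a genuine parent matrix of some rational in $(0,1)$, so that reading its mediant off the row sums is legitimate. Once this is in place the three identities reduce to the three displayed $2\times2$ products. Alternatively, each part can be proved by induction on $|w|$ straight from the defining recurrence, tracking how the parents of $\alpha$ transform under $\alpha\mapsto 1-\alpha$ and under passage to a Farey parent; this is more elementary but requires the same care about which parent carries the larger denominator at each step.
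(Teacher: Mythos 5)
Your argument is correct, but note that the paper does not actually prove this lemma at all: it defers entirely to Lemmas 3.5 and 3.8 of the cited companion paper \cite{Kogiso-Wakui}, where the statements are established by induction from the defining recurrence for $w(\alpha)$ (equivalently, via the correspondence with triangulated polygons). Your transfer-matrix proof is therefore a genuinely different, self-contained route. The dictionary is set up correctly: with the left parent in the first column one has $\det M(\alpha)=xs-yr=-1$, left/right descent is right multiplication by $U$/$V$, and the conjugation identities $V=JUJ$, $U^{\mathsf T}=V$ together with the symmetry of $M_0$ do reduce $(1)$ and $(3)$ to the two displayed products, whose row sums give $\tfrac{q-p}{q}$ and $\tfrac{s}{q}$ as claimed; I checked both matrix computations. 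The worry you raise about "genuine parent matrices" is in fact automatic here, since $J X_1\cdots X_n J$ and $(X_1\cdots X_n)^{\mathsf T}$ are again words in $U,V$, hence literally the matrices of the vertices reached by the modified traversals, so reading the mediant off the row sums is legitimate without further argument. The deduction of $(2)$ from $(1)$ and $(3)$ via $r=i\circ(ir)$ and $q-s=r$ is also fine, provided one notes (as is easily checked) that $\tfrac{s}{q}$ is an irreducible fraction in $(0,1)$ so that part $(1)$ applies to it. What your approach buys is a uniform mechanism in which all three operations are conjugations or transpositions of a single matrix word; what the inductive approach of \cite{Kogiso-Wakui} buys is independence from the Stern--Brocot visualization, which in the present paper is only stated as a remark. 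If you want your proof to stand entirely on the paper's definitions, you should supply the short induction identifying the reversed traversal word with $w(\alpha)$ from the recurrence $w\bigl(\tfrac{p}{q}\sharp\tfrac{r}{s}\bigr)=Lw\bigl(\tfrac{r}{s}\bigr)$ or $Rw\bigl(\tfrac{p}{q}\bigr)$, rather than citing the remark.
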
 

\par \smallskip 
For the proof of the above lemma see \cite[Lemmas 3.5 and 3.8]{Kogiso-Wakui}. 
\par 
Let $\alpha $ be a rational number, and 
expand it as a continued fraction  
\begin{equation}\label{eqw1-10}
\begin{aligned}
\alpha =a_0+\dfrac{1}{\vbox to 18pt{ }a_1+\dfrac{1}{\vbox to 18pt{ }a_2+\dfrac{1}{\vbox to 18pt{ }\ddots +\dfrac{1}{\vbox to 18pt{ }a_{n-1}+\dfrac{1}{a_n}}}}}, 
\end{aligned}
\end{equation}

\noindent 
where $a_0\in \mathbb{Z},\ a_1, a_2, \ldots , a_n\in \mathbb{N}$. 
We denote the right-hand side of \eqref{eqw1-10} by $[a_0, a_1, a_2, \ldots , a_n]$. 
Note that the expansion \eqref{eqw1-10} is unique if the parity of $n$ is specified. 

\par \smallskip 
\begin{lem}\label{1-4}
For a rational number $\alpha =[0, a_1, a_2, \ldots , a_n]$ in the open interval $(0,1)$ 
\begin{enumerate}
\item[$(1)$] $i(\alpha )=[0, 1, a_1-1, a_2, \ldots , a_n]$. 
\item[$(2)$] If $n$ is even, then $r(\alpha )=[0, 1, a_n-1, a_{n-1}, \ldots , a_2, a_1]$ and $(ir)(\alpha )=[0, a_n, \ldots , a_2, a_1]$. 
\end{enumerate}
\end{lem}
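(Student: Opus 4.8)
The plan is to deduce everything from the explicit descriptions $i(\alpha)=\tfrac{q-p}{q}$, $r(\alpha)=\tfrac{r}{q}$, $(ir)(\alpha)=\tfrac{s}{q}$ furnished by Lemma~\ref{1-2}, combined with two classical facts about the convergents of a continued fraction. Throughout I write $\alpha=[0,a_1,\ldots,a_n]=\tfrac{p}{q}$ and let $\tfrac{p_k}{q_k}=[0,a_1,\ldots,a_k]$ be the convergents, so that $\tfrac{p_n}{q_n}=\tfrac{p}{q}$ in lowest terms, $q_k=a_kq_{k-1}+q_{k-2}$, and $p_kq_{k-1}-p_{k-1}q_k=(-1)^{k-1}$.

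For $(1)$ the key observation is that $i(\alpha)=\tfrac{q-p}{q}=1-\alpha$, so the statement becomes a purely continued-fraction identity. Putting $\beta=[a_1,\ldots,a_n]$, so $\alpha=\tfrac1\beta$, I would compute $1-\alpha=\tfrac{\beta-1}{\beta}=\tfrac{1}{\,1+\frac{1}{\beta-1}\,}$ and then use $\beta-1=[a_1-1,a_2,\ldots,a_n]$ to read off $1-\alpha=[0,1,a_1-1,a_2,\ldots,a_n]$. The only point requiring separate attention is $a_1=1$, where the entry $a_1-1=0$ has to be absorbed by the collapsing rule $[\ldots,c,0,d,\ldots]=[\ldots,c+d,\ldots]$; this is routine.

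For $(2)$, fix $n$ even and first locate the parents of $\alpha=\tfrac{p_n}{q_n}$ among its convergents. Since $\tfrac{p_n-p_{n-1}}{q_n-q_{n-1}}=[0,a_1,\ldots,a_{n-1},a_n-1]$ and $\tfrac{p_n}{q_n}$ is the mediant of this fraction with $\tfrac{p_{n-1}}{q_{n-1}}$, these two fractions are precisely the parents of $\alpha$. The determinant identity gives $p_nq_{n-1}-p_{n-1}q_n=(-1)^{n-1}=-1$, so $\tfrac{p_{n-1}}{q_{n-1}}>\alpha$; hence $\tfrac{p_{n-1}}{q_{n-1}}$ is the larger parent $\tfrac{y}{s}$, forcing $s=q_{n-1}$ and $(ir)(\alpha)=\tfrac{s}{q}=\tfrac{q_{n-1}}{q_n}$. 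The second classical fact is the reversal identity $\tfrac{q_n}{q_{n-1}}=[a_n,a_{n-1},\ldots,a_1]$ (most cleanly seen from the palindromy $K(a_1,\ldots,a_n)=K(a_n,\ldots,a_1)$ of continuants), which turns $(ir)(\alpha)=\tfrac{q_{n-1}}{q_n}$ into $[0,a_n,a_{n-1},\ldots,a_1]$, as asserted. Finally, since on $LR$ words $i$ and $r$ are commuting involutions we have $r=i\circ(ir)$, and passing through the bijection $w$ this gives $r(\alpha)=i\bigl((ir)(\alpha)\bigr)$; applying part $(1)$ to $(ir)(\alpha)=[0,a_n,a_{n-1},\ldots,a_1]$ then produces $[0,1,a_n-1,a_{n-1},\ldots,a_1]$, the claimed value of $r(\alpha)$.

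I expect the main obstacle to be the bookkeeping that decides which parent is $\tfrac{x}{r}$ and which is $\tfrac{y}{s}$: this is exactly where the hypothesis that $n$ is even is used, through the sign $(-1)^{n-1}$ in the determinant identity, and keeping the conventions consistent is the delicate part. A clean derivation of the reversal identity $\tfrac{q_n}{q_{n-1}}=[a_n,\ldots,a_1]$ is the other ingredient that must be set up carefully, and continuants make it transparent. Everything else, including the degenerate cases where a partial quotient equals $1$ and a $0$ must be absorbed, is routine continued-fraction algebra.
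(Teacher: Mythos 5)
Your argument is correct. A point of comparison is slightly moot here because the paper does not actually prove Lemma~\ref{1-4}: it defers entirely to \cite[Corollary 3.7 and Lemma 3.11]{Kogiso-Wakui}, where these identities are obtained by converting the continued fraction $[0,a_1,\ldots ,a_n]$ into its $LR$ word and letting the letter-swap $i$ and the reversal $r$ act on that word directly. Your route is genuinely different and more number-theoretic: you take the arithmetic descriptions from Lemma~\ref{1-2} as the starting point and reduce everything to two classical convergent facts, namely the determinant identity $p_kq_{k-1}-p_{k-1}q_k=(-1)^{k-1}$ (which, for $n$ even, correctly places $\frac{p_{n-1}}{q_{n-1}}$ as the larger parent and $\frac{p_n-p_{n-1}}{q_n-q_{n-1}}$ as the smaller, so that $s=q_{n-1}$) and the continuant reversal $\frac{q_n}{q_{n-1}}=[a_n,\ldots ,a_1]$; the observation that $r(\alpha)=i\bigl((ir)(\alpha)\bigr)$, i.e.\ $\frac{q-s}{q}=1-\frac{s}{q}$, then lets you recycle part $(1)$ instead of doing a second reversal computation. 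The word-based proof in the cited source is self-contained at the level of $LR$ words; yours is shorter but leans on Lemma~\ref{1-2} and on standard continued-fraction machinery. Two small things should be made explicit for completeness: the verification that $\frac{p_n-p_{n-1}}{q_n-q_{n-1}}$ and $\frac{p_{n-1}}{q_{n-1}}$ are themselves Farey neighbours (immediate from the same determinant identity), which is what lets you invoke uniqueness of the parent pair, and the collapsing rule for the degenerate entries $a_1=1$ and $a_n=1$, which you already flag as routine.
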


Proofs of the above lemma can be found in \cite[Corollary 3.7 and Lemma 3.11]{Kogiso-Wakui}. 

\par 
The rational numbers are classified into three types as follows. 
Let $\alpha $ be a rational number, and express it as an irreducible fraction $\alpha =\frac{p}{q}$.  
We call $\alpha $ $\frac{1}{1}$-type if $p\equiv 1,\ q\equiv 1\ (\text{mod}\ 2)$. 
Similarly, $\frac{1}{0}$-type and $\frac{0}{1}$-type are defined. 
Let us define $n(\alpha ), d(\alpha )\in \{ 0,1\}$ by the equations $p\equiv n(\alpha ),\ q\equiv d(\alpha )\ (\text{mod}\ 2)$. 
When $\alpha $ is in the open interval $(0,1)$ and is expressed in the continued fraction form $\alpha =[0, a_1, \ldots , a_n]$ with $n\geq 3$, 
we set $\alpha _0:=0,\ \alpha _i=[0, a_1, \ldots , a_i]$ for all $i=1, \ldots , n$. 
Then the following recurrence equations hold for all $i\geq 2$:
\begin{align*}
n(\alpha _i)&=n(\alpha _{i-2})+\dfrac{1-(-1)^{a_i}}{2}n(\alpha _{i-1}), \\ 
d(\alpha _i)&=d(\alpha _{i-2})+\dfrac{1-(-1)^{a_i}}{2}d(\alpha _{i-1}), 
\end{align*}
where these equations are treated in modulo $2$.

\par \bigskip \noindent 
\begin{lem}\label{1-5}
Let $\alpha =\frac{p}{q}$ be an irreducible fraction in $(0,1)$, and let $\bigl( \frac{x}{r}, \frac{y}{s}\bigr)$ be the pair of parents. 
We write in the continued fraction form $\alpha =[0, a_1, \ldots , a_n]$ for an even number $n$, and let 
$N_0(a_1, \ldots , a_n)$ be the number of even integers in $a_1, \ldots , a_n$. 
\begin{enumerate}
\item[$(1)$] If $\alpha $ is $\frac{1}{0}$-type, then 
\begin{enumerate}
\item[$\bullet$] $N_0(a_1, \ldots , a_n)$ is even if and only if $x$ is even, 
\item[$\bullet$] $N_0(a_1, \ldots , a_n)$ is odd  if and only if $y$ is even. 
\end{enumerate}
\item[$(2)$] If $\alpha $ is $\frac{1}{1}$-type, then 
\begin{enumerate}
\item[$\bullet$] $N_0(a_1, \ldots , a_n)$ is even  if and only if $y$ is even, 
\item[$\bullet$] $N_0(a_1, \ldots , a_n)$ is odd if and only if  $x$ is even. 
\end{enumerate}
\item[$(3)$] If $\alpha $ is $\frac{0}{1}$-type, then 
$x, y$ are odd, and 
\begin{enumerate}
\item[$\bullet$] $N_0(a_1, \ldots , a_n)$ is even  if and only if $s$ is even, 
\item[$\bullet$] $N_0(a_1, \ldots , a_n)$ is odd  if and only if $r$ is even. 
\end{enumerate}
\end{enumerate}
\end{lem}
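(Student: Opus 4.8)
The plan is to translate the statement into the parity behaviour of the convergents of the continued fraction and then read it off from a single computation in $\mathrm{GL}_2(\mathbb{F}_2)\cong S_3$. Write the convergents as $\alpha_i=\frac{p_i}{q_i}$, so that $n(\alpha_i)\equiv p_i$ and $d(\alpha_i)\equiv q_i\pmod 2$, with the standard recurrence $p_i=a_ip_{i-1}+p_{i-2}$, $q_i=a_iq_{i-1}+q_{i-2}$ and initial data $(p_0,q_0)=(0,1)$, $(p_{-1},q_{-1})=(1,0)$. Since $n$ is even, the larger parent of $\alpha$ is the penultimate convergent, $\frac{y}{s}=\frac{p_{n-1}}{q_{n-1}}$, while the smaller parent is the semiconvergent $\frac{x}{r}=\frac{(a_n-1)p_{n-1}+p_{n-2}}{(a_n-1)q_{n-1}+q_{n-2}}$; one checks directly that $ry-xs=p_{n-1}q_{n-2}-p_{n-2}q_{n-1}=(-1)^{n-2}=1$, confirming that these are the parents. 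In particular $y\equiv p_{n-1}$ and $s\equiv q_{n-1}\pmod 2$, and the remaining parities are then forced by $p=x+y$, $q=r+s$ together with $ry-xs=1$.

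First I would record the matrix identity $\left(\begin{smallmatrix} p_n & p_{n-1}\\ q_n & q_{n-1}\end{smallmatrix}\right)=\left(\begin{smallmatrix}0&1\\1&0\end{smallmatrix}\right)\prod_{i=1}^{n}\left(\begin{smallmatrix}a_i&1\\1&0\end{smallmatrix}\right)$ and reduce it modulo $2$. Setting $E=\left(\begin{smallmatrix}0&1\\1&0\end{smallmatrix}\right)$ and $O=\left(\begin{smallmatrix}1&1\\1&0\end{smallmatrix}\right)$ in $\mathrm{GL}_2(\mathbb{F}_2)$, each factor becomes $E$ when $a_i$ is even and $O$ when $a_i$ is odd, so $P:=\left(\begin{smallmatrix} p & y\\ q & s\end{smallmatrix}\right)\equiv E\,M_1\cdots M_n\pmod 2$, where exactly $N_0(a_1,\dots ,a_n)$ of the factors $M_i$ equal $E$. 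Under the isomorphism $\mathrm{GL}_2(\mathbb{F}_2)\cong S_3$ given by the action on the three nonzero vectors of $\mathbb{F}_2^2$, the matrix $E$ is a transposition and $O$ a $3$-cycle, so the sign homomorphism yields $\mathrm{sgn}(P)=(-1)^{1+N_0}$. Hence $N_0$ is even precisely when $P$ is one of the three involutions of $\mathrm{GL}_2(\mathbb{F}_2)$, and odd precisely when $P\in\{I,O,O^2\}$.

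It then remains to run the three cases according to the type of $\alpha$, which fixes the first column $(p,q)\bmod 2$ of $P$ to be $(1,0)$, $(1,1)$, $(0,1)$ respectively. In each case invertibility of $P$ (its determinant must equal $1$ in $\mathbb{F}_2$) pins down the admissible second columns $(y,s)\bmod 2$, and comparing with the list of involutions against $\{I,O,O^2\}$ matches the parity of $N_0$ with the parity of $y$ in the $\frac{1}{0}$- and $\frac{1}{1}$-types and with the parity of $s$ in the $\frac{0}{1}$-type; the parities of $x$ and $r$ then follow from $p=x+y$ and $q=r+s$. For instance, in the $\frac{0}{1}$-type the determinant condition forces $y$ odd, hence $x$ odd as well, and $P$ equals $E$ exactly when $s$ is even (so $N_0$ even) and equals $O^2$ exactly when $s$ is odd (so $N_0$ odd), which is the assertion.

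I expect the only real subtlety to lie in the bookkeeping of the first step: correctly identifying $\frac{p_{n-1}}{q_{n-1}}$ as the larger parent, which is precisely where the hypothesis that $n$ is even is used, and deducing the parities of the ``other'' entries $x,r$ from the Farey relation. Once the translation into the $S_3$-picture is in place, the sign homomorphism does the conceptual work and the three type-cases reduce to a finite, mechanical verification.
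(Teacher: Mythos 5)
Your proposal is correct, and it takes a genuinely different route from the paper. The paper proves the lemma by induction on the number of Farey sum operations: assuming the statement for the two parents $\beta,\gamma$ of $\alpha$, it runs a case analysis on the types of $\beta$ and $\gamma$ (splitting further according to $a_n\geq 2$ versus $a_n=1$ and the parity of $N_0$), ruling out the impossible type combinations by deriving contradictions from the explicit Farey decompositions of $\beta$ or $\gamma$. Your argument instead is a closed-form computation: the matrix identity for convergents reduced mod $2$, the observation that $\bigl(\begin{smallmatrix}a_i&1\\1&0\end{smallmatrix}\bigr)$ becomes a transposition or a $3$-cycle in $\mathrm{GL}_2(\mathbb{F}_2)\cong S_3$ according to the parity of $a_i$, and the sign homomorphism, which turns the parity of $N_0$ into the parity of the permutation $P\equiv\bigl(\begin{smallmatrix}p&y\\q&s\end{smallmatrix}\bigr)$. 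I checked the bookkeeping: for $n$ even the larger parent is indeed $\frac{y}{s}=\frac{p_{n-1}}{q_{n-1}}$, the determinant $p_nq_{n-1}-p_{n-1}q_n=(-1)^{n-1}\equiv 1\pmod 2$ guarantees $P\in\mathrm{GL}_2(\mathbb{F}_2)$, and the six admissible columns $(p,q;y,s)\bmod 2$ match the lemma's assertions exactly (e.g.\ for $\frac{0}{1}$-type the invertibility of $P$ forces $y$ odd, recovering the claim that $x,y$ are both odd). What your approach buys is a uniform, non-inductive proof in which the three type-cases reduce to a six-entry table, and it also yields for free the mod-$2$ recurrences for $n(\alpha_i),d(\alpha_i)$ stated in the paper just before the lemma; what the paper's induction buys is that it stays entirely within the elementary language of Farey sums and continued fraction truncations already set up in Section 1, at the cost of a substantially longer case analysis.
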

\begin{proof}
\par 
We show this lemma by induction on the numbers of Farey sum operation $\sharp $ 
since any irreducible fraction in $(0,1)$ can be obtained from $\frac{0}{1}$ and $\frac{1}{1}$ by applying $\sharp $, repeatedly. 
\par 
The rational number $\frac{1}{2}=[0,1,1]$ is written by $\frac{1}{2}=\frac{0}{1}\sharp \frac{1}{1}$, and thus the lemma holds for $\frac{1}{2}$. 
\par 
Next, suppose that the lemma holds for two rational numbers $\beta , \gamma $. 
Let us show the lemma for $\alpha =\beta \sharp \gamma $.  
We write in the form $\alpha =[0, a_1, \ldots , a_n]$ for some even number $n$. 
\par 
(I) Let us consider the case $a_n\geq 2$. 
Then, 
$$\beta =[0, a_1, \ldots , a_n-1]=\dfrac{x}{r},\quad \gamma =[0, a_1, \ldots , a_{n-1}]=\dfrac{y}{s}.$$
\par 
(i) Suppose that $N_0(a_1, \ldots , a_n)$ is even. Then $N_0(a_1, \ldots , a_n-1)$ is odd. 
\par 
If $\beta $ is $\frac{1}{1}$-type and $\gamma $ is $\frac{0}{1}$-type, then $\alpha $ is $\frac{1}{0}$-type. 
By induction hypothesis, $\beta $ is a Farey sum of rational numbers of $\frac{0}{1}$-type and $\frac{1}{0}$-type. 
Since
$$\beta =\begin{cases}
[0, a_1, \ldots , a_n-2]\sharp [0, a_1, \ldots , a_{n-1}] & \text{if}\ a_n\geq 3,\\ 
[0, a_1, \ldots , a_{n-2}]\sharp [0, a_1, \ldots , a_{n-1}]& \text{if}\ a_n=2, 
\end{cases}
$$
$\gamma =[0, a_1, \ldots , a_{n-1}]$ is $\frac{1}{0}$-type. This is a contradiction. 
By the same manner, we have a contradiction when we suppose that 
$\beta $ and $\gamma $ are $\frac{0}{1}$-type and $\frac{1}{0}$-type, respectively, or that 
$\beta $ and $\gamma $ are $\frac{1}{0}$-type and $\frac{1}{1}$-type, respectively. 
Thus, the following three cases only occur. 
\begin{enumerate}
\item[$\bullet$] $\beta $ and $\gamma $ are $\frac{0}{1}$-type and $\frac{1}{1}$-type, respectively. 
In this case $\alpha $ is $\frac{1}{0}$-type, and $x$ is even. 
\item[$\bullet$] $\beta $ and $\gamma $ are  $\frac{1}{0}$-type and $\frac{0}{1}$-type, respectively. 
In this case $\alpha $ is $\frac{1}{1}$-type, and $y$ is even. 
\item[$\bullet$] 
$\beta $ and $\gamma $ are  $\frac{1}{1}$-type and $\frac{1}{0}$-type, respectively. 
In this case $\alpha $ is $\frac{0}{1}$-type, and $s$ is even. 
\end{enumerate}

(ii) Suppose that $N_0(a_1, \ldots , a_n)$ is odd. Then $N_0(a_1, \ldots , a_n-1)$ is even. 
By the same manner in Part (i), one can verify that the following three cases only occur. 
\begin{enumerate}
\item[$\bullet$] $\beta $ and $\gamma $ are  $\frac{1}{1}$-type and $\frac{0}{1}$-type, respectively. 
In this case $\alpha $ is $\frac{1}{0}$-type, and $y$ is even. 
\item[$\bullet$] $\beta $ and $\gamma $ are  $\frac{0}{1}$-type and $\frac{1}{0}$-type, respectively. 
In this case $\alpha $ is $\frac{1}{1}$-type, and $x$ is even. 
\item[$\bullet$] 
$\beta $ and $\gamma $ are  $\frac{1}{0}$-type and $\frac{1}{1}$-type, respectively. 
In this case $\alpha $ is $\frac{0}{1}$-type, and $r$ is even. 
\end{enumerate}

\par 
(II) Let us consider the case $a_n=1$. Then, 
$$\beta =[0, a_1, \ldots , a_{n-2}]=\dfrac{x}{r},\quad \gamma =[0, a_1, \ldots , a_{n-1}]=\dfrac{y}{s}.$$
\par 
Suppose that $N_0(a_1, \ldots , a_n)$ is even. Then $N_0(a_1, \ldots , a_{n-1}-1,1)$ is odd. 
\par 
If $\beta $ is $\frac{1}{1}$-type and $\gamma $ is $\frac{0}{1}$-type, then $\alpha $ is $\frac{1}{0}$-type. 
By induction hypothesis, $\gamma $ is a Farey sum of rational numbers of $\frac{0}{1}$-type and $\frac{1}{0}$-type. 
Since
$$\gamma =\begin{cases}
[0, a_1, \ldots , a_{n-2}]\sharp [0, a_1, \ldots , a_{n-1}-1] & \text{if}\ a_{n-1}\geq 2,\\ 
[0, a_1, \ldots , a_{n-2}]\sharp [0, a_1, \ldots , a_{n-3}]& \text{if}\ a_{n-1}=1, 
\end{cases}
$$
$\beta =[0, a_1, \ldots , a_{n-2}]$ is $\frac{0}{1}$-type. This is a contradiction. 
By the same argument in Part (I), when $N_0(a_1, \ldots , a_n)$ is even, we see that the same result in (i) of (I) holds. 
It can be also verified that when $N_0(a_1, \ldots , a_n)$ is odd,  the same result in (ii) of (I) holds. 
\end{proof}

\section{Conway-Coxeter friezes of zigzag type}
\par 
A Conway-Coxeter frieze (abbreviated by CCF) \cite{CoCo1, CoCo2, Coxeter} is an infinite array of positive integers, displayed on shifted lines such that the top and bottom lines are composed only of $1$s, and 
each unit diamond 
$$\begin{matrix}
 & b & \\ 
a & & d \\ 
 & c & \end{matrix}$$
in the array satisfies the determinant condition $ad-bc=1$. 
\par 
Given an $LR$ word $w$, one can construct a Conway-Coxeter frieze $\varGamma (w)$. 
We will explain this construction by an example. 
Let $w=L^2R^2L$. 
Then, we set six $1$s as in Figure~\ref{fig1} as an initial arrangement. 

\begin{figure}[htbp]
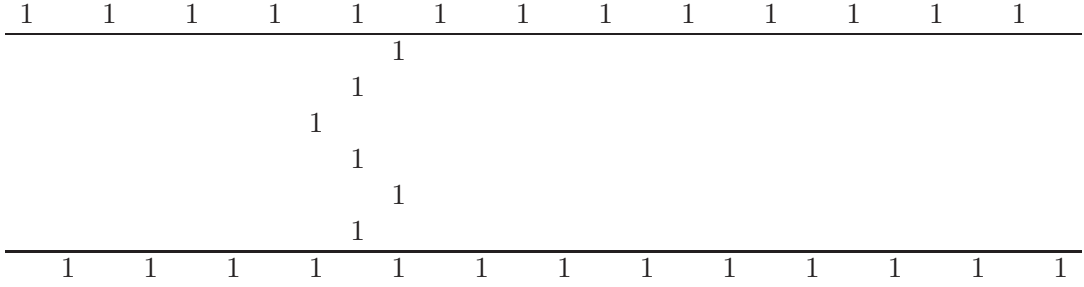

\centering 
$\begin{array}{cccccccccccccccccccccccccc}
1 & & 1 & &1  & & 1 & & 1 & & 1 & & 1 & & 1 & &1  & & 1 &&1&&1&&1& \\\hline 
&  & &  & &  & &  & & 1 & &  & &  & &  & &  & &  &&&&&& \\ 
&&  & &  & &  & & 1 & &   & &  & &  & &  & &  & &  &&&&& \\ 
&&&  & &  & & 1 & &  & &  & &  & &  & &  & &  & &  &&&& \\ 
&&&&  & &  & & 1 & &  & &  & &  & &  & &  & &  & &  &&& \\ 
&&&&&  & &  & & 1 & &  & &  & &  & &  & &  & &  & &  && \\ 
&&&&&&  & & 1 & &  & &  & &  & &  & &  & &  & &  & &  & \\\hline 
&1&&1&&1&&1 & & 1 & & 1 & & 1 & & 1 & & 1 & & 1 & & 1 & & 1 & & 1 \\
\end{array}$
\caption{an initial arrangement of $1$s}\label{fig1}
\end{figure}

The letters \lq\lq $L$" and \lq\lq $R$" correspond to going down to the left and the right in the zigzag path consisting of six $1$s, respectively. 
Applying the rule $ad-bc=1$ repeatedly, we see that 
this initial arrangement generates a Conway-Coxeter frieze, which is given in Figure~\ref{fig2}. 

\begin{figure}[htbp]
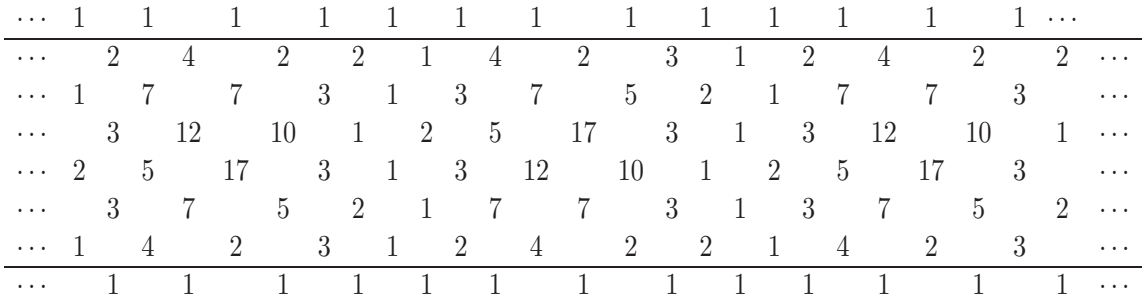

\centering
\scalebox{0.9}[1.05]{
$\begin{array}{c@{\kern0.8em}c@{\kern0.8em}c@{\kern0.8em}c@{\kern0.8em}c@{\kern0.8em}c@{\kern0.8em}c@{\kern0.8em}c@{\kern0.8em}c@{\kern0.8em}c@{\kern0.8em}c@{\kern0.8em}c@{\kern0.8em}c@{\kern0.8em}c@{\kern0.8em}c@{\kern0.8em}c@{\kern0.8em}c@{\kern0.8em}c@{\kern0.8em}c@{\kern0.8em}c@{\kern0.8em}c@{\kern0.8em}c@{\kern0.8em}c@{\kern0.8em}c@{\kern0.8em}c@{\kern0.8em}c@{\kern0.8em}c@{\kern0.8em}c}
\cdots & 1 & & 1 & & 1 & & 1 & & 1 & & 1 & & 1 & & 1 & & 1 & & 1 && 1 && 1 &&1 & \cdots  \\\hline 
\cdots && 2 & & 4 & & 2 & & 2 & & 1 & & 4 & & 2 & & 3 & & 1 & & 2 && 4 && 2 && 2 & \cdots  \\ 
\cdots & 1 && 7 & & 7 & & 3 & & 1 & & 3  & & 7 & & 5 & & 2 & & 1 & & 7 && 7 && 3 && \cdots  \\ 
\cdots && 3 && 12 & & 10 & & 1 & & 2 & & 5 & & 17 & & 3 & & 1 & & 3 & & 12 && 10 && 1 & \cdots  \\ 
\cdots & 2 && 5 && 17 & & 3 & & 1 & & 3 & & 12 & & 10 & & 1 & & 2 & & 5 & & 17 && 3 && \cdots  \\ 
\cdots && 3 &&7 && 5 & & 2 & & 1 & & 7 & & 7 & & 3 & & 1 & & 3 & & 7 & & 5 && 2 & \cdots \\ 
\cdots & 1 && 4 && 2 && 3 & & 1 & & 2 & & 4 & & 2 & & 2 & & 1 & & 4 & & 2 & & 3 && \cdots  \\\hline 
\cdots &&1 &&1 &&1 &&1 & & 1 & & 1 & & 1 & & 1 & & 1 & & 1 & & 1 & & 1 & & 1& \cdots \\
\end{array}$}
\caption{the CCF corresponding to $L^2R^2L$}\label{fig2}
\end{figure}

It is not true that every Conway-Coxeter frieze is constructed from some $LR$ word. 
In fact, there is a Conway-Coxeter frieze such that a $1$-zigzag line connecting the ceiling and the floor does not appear. 
It depends on whether a triangle with only a diagonal line appears in the triangulation of a polygon corresponding to a Conway-Coxeter frieze (see \cite[Remark 3.10]{Kogiso-Wakui}). 
\par 
In the present paper we only consider Conway-Coxeter friezes where $1$-zigzag lines appear. 
Such a CCF is called a Conway-Coxeter frieze of \textit{zigzag type}. 
\par 
Any Conway-Coxeter frieze of zigzag type is constructed from some $LR$ word as explained above. 
This means that the map $w \longmapsto \varGamma (w)$ is a surjection
from the set of $LR$ words to the set of Conway-Coxeter friezes of zigzag type. 
This map is not bijective since $\varGamma ((ir)(w))$ and $\varGamma (w)$ are transformed by a horizontal translation and the reflection with respect to the middle horizontal line each other \cite{Coxeter, Kogiso-Wakui, M-G}. 
We consider that two CCFs are equivalent if they are transformed by the vertical or horizontal reflection or the composition of them. 
\par 
Let $\varGamma $ be a CCF of zigzag type. 
One can find the maximum number, say $q$, in $\varGamma $. 
If $\varGamma =\varGamma (w(\alpha ))$ for some $\alpha \in \mathbb{Q}\cap (0,1)$, then the four numerators 
of $\alpha , i(\alpha ), r(\alpha ), (ir)(\alpha )$ appear around $q$ in $\varGamma $. 
For example, in the case where $\varGamma $ is given by Figure~\ref{fig2}, the maximum number is $17$, and $\alpha =\frac{7}{17},\  
i(\alpha )=\frac{10}{17},\ r(\alpha )=\frac{12}{17},\ (ir)(\alpha )=\frac{5}{17}$, whose numerators appear around $17$ in $\varGamma $. 
In this way, a Conway-Coxeter frieze of zigzag type is determined by the set $\{ w, i(w), r(w), (ir)(w) \}$ for an $LR$ word $w$, or equivalently 
are determined by the set $\{ \alpha , i(\alpha ), r(\alpha ), (ir)(\alpha ) \}$ for a rational number $\alpha $ in the open interval $(0,1)$. 

\section{A characterization of CCFs of zigzag type by rational links}
\par 
Let $\alpha $ be a rational number, and expand it as $\alpha =[a_0, a_1, \ldots , a_n]$ as a continued fraction, where 
$a_0\in \mathbb{Z}$ and $a_1, \ldots , a_n\in \mathbb{N}$. 
Then a $(2, 2)$-tangle diagram $T(\alpha )$ is defined as follows. 
If $n$ is even, then 

\begin{equation}\label{eq3.1}
T(\alpha ) :=\ \raisebox{-1.5cm}{\includegraphics[height=3cm]{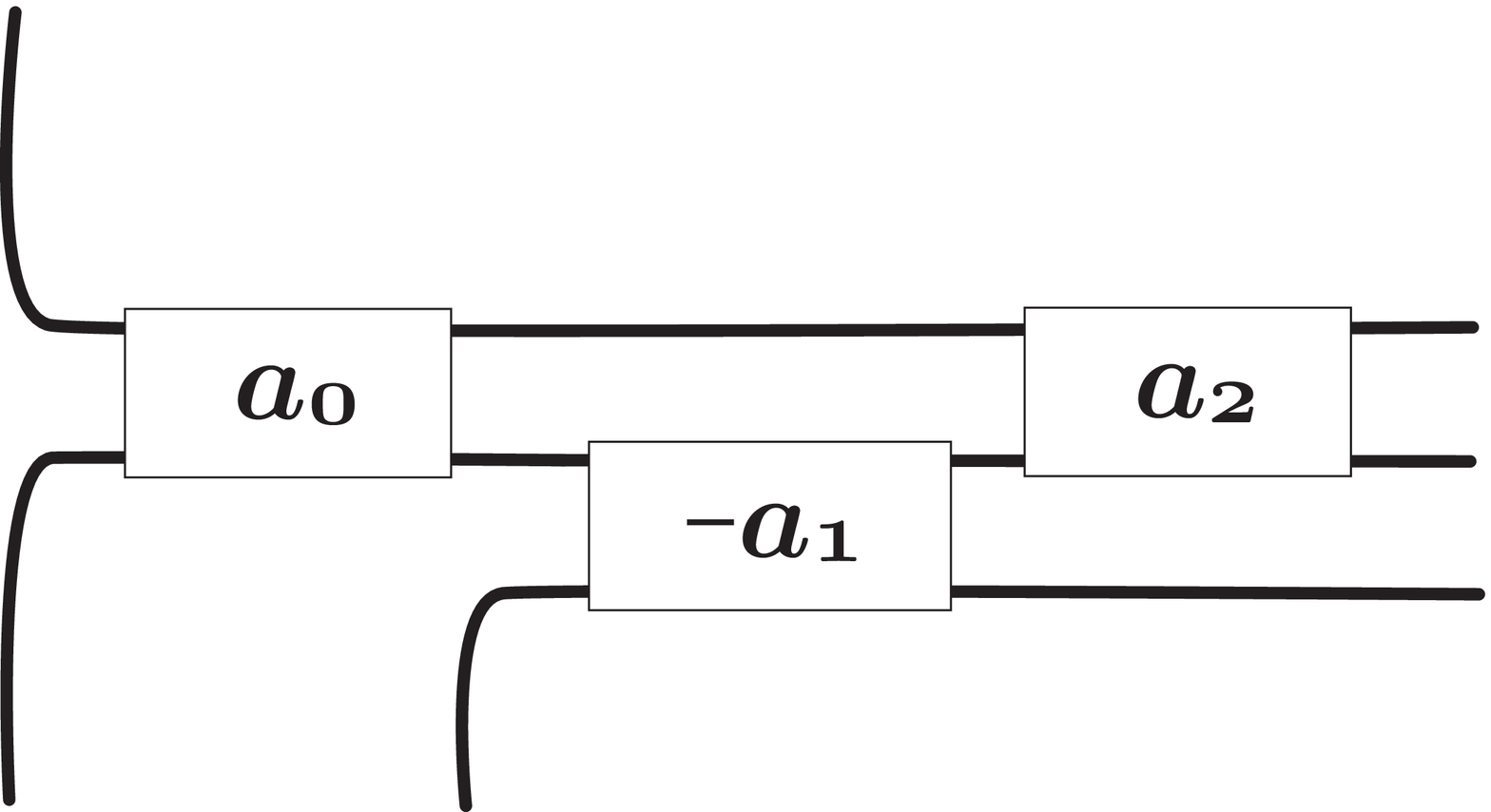}}
\end{equation}

\vspace{0.5cm}
\noindent 
where 
$$\raisebox{-0.3cm}{\includegraphics[height=0.8cm]{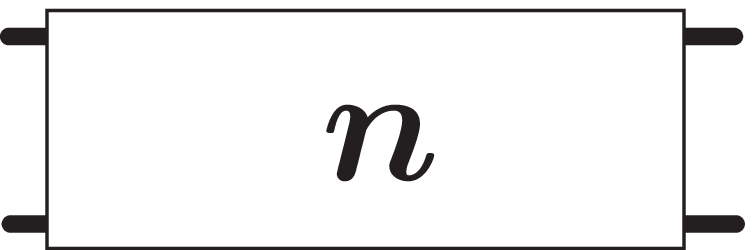}}\ \ =\ 
\begin{cases}
\raisebox{-0.5cm}{\includegraphics[height=1.2cm]{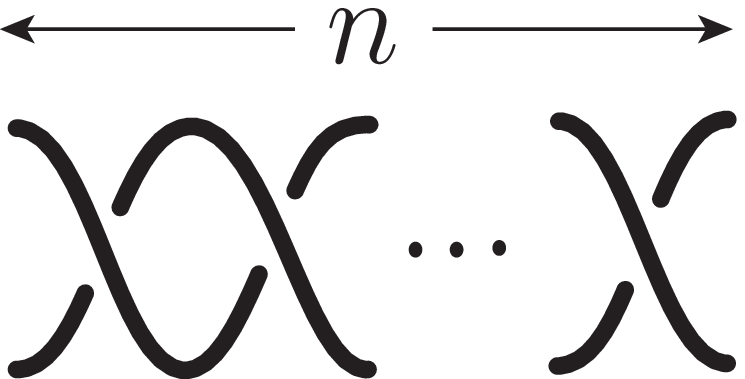}}\ & \raisebox{-0.2cm}{if $n\geq 0$,}\\[0.5cm]  
\raisebox{-0.5cm}{\includegraphics[height=1.2cm]{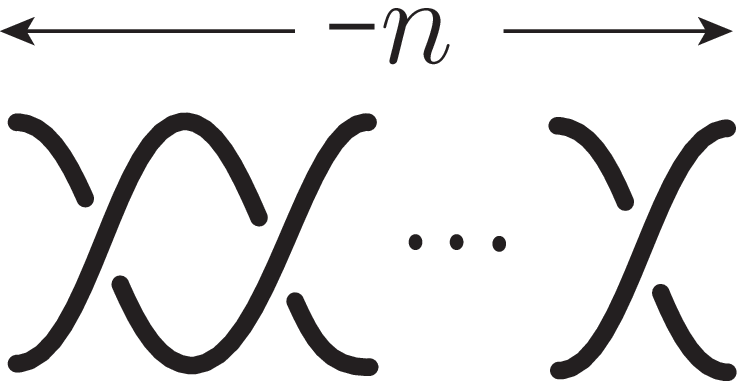}}\ & \raisebox{-0.2cm}{if $n< 0$.}
\end{cases} $$
If $n$ is odd, then 

\begin{equation}\label{eq3.2}
T(\alpha ) :=\ \raisebox{-1.5cm}{\includegraphics[height=3cm]{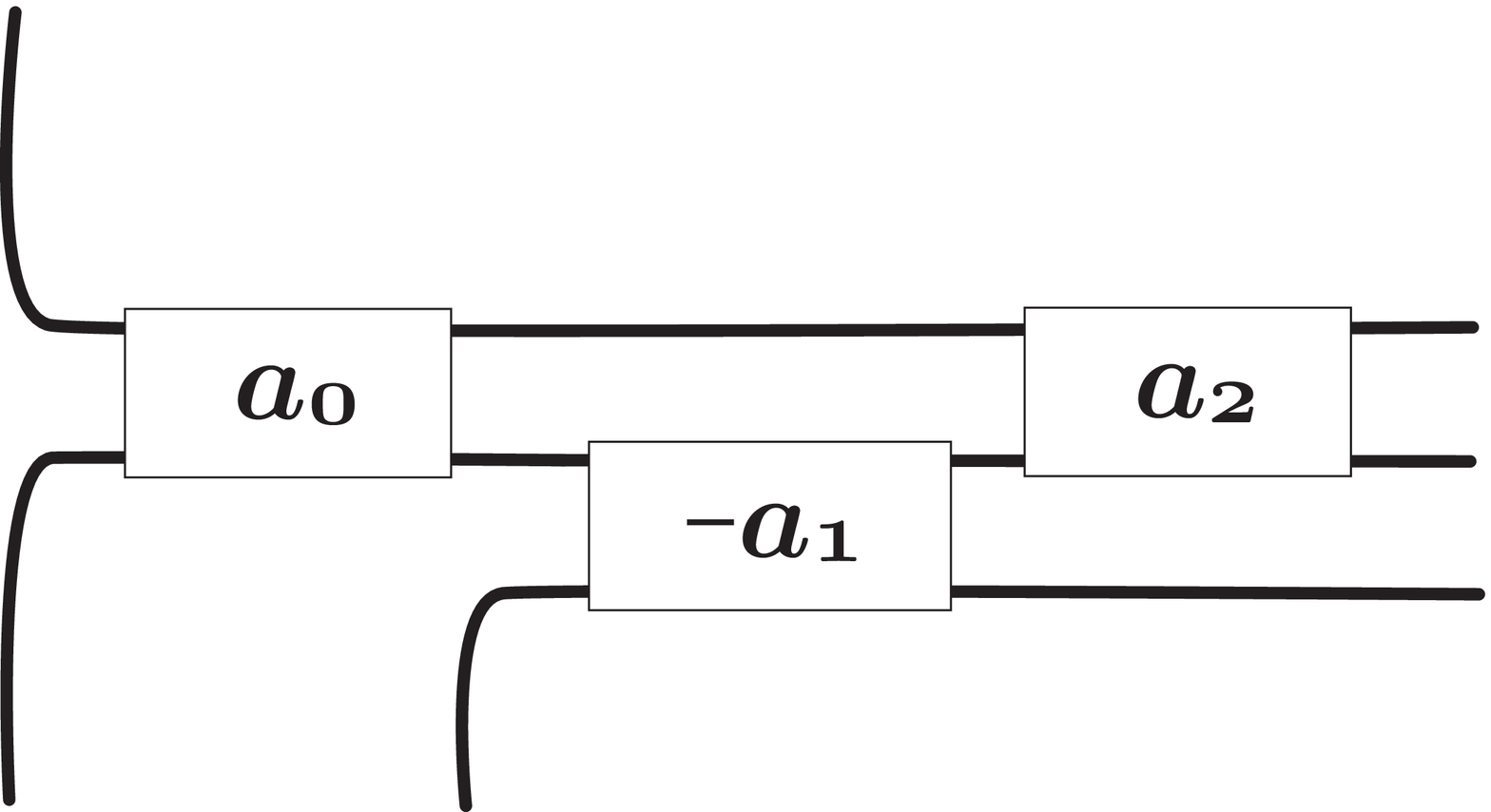}}\ . 
\end{equation}

The definition of $T(\alpha )$ is well-defined since the diagram \eqref{eq3.1} for $a_n\geq 2$ is regular isotopic to the diagram 
$$
\hspace{2cm} \raisebox{-1.5cm}{\includegraphics[height=3cm]{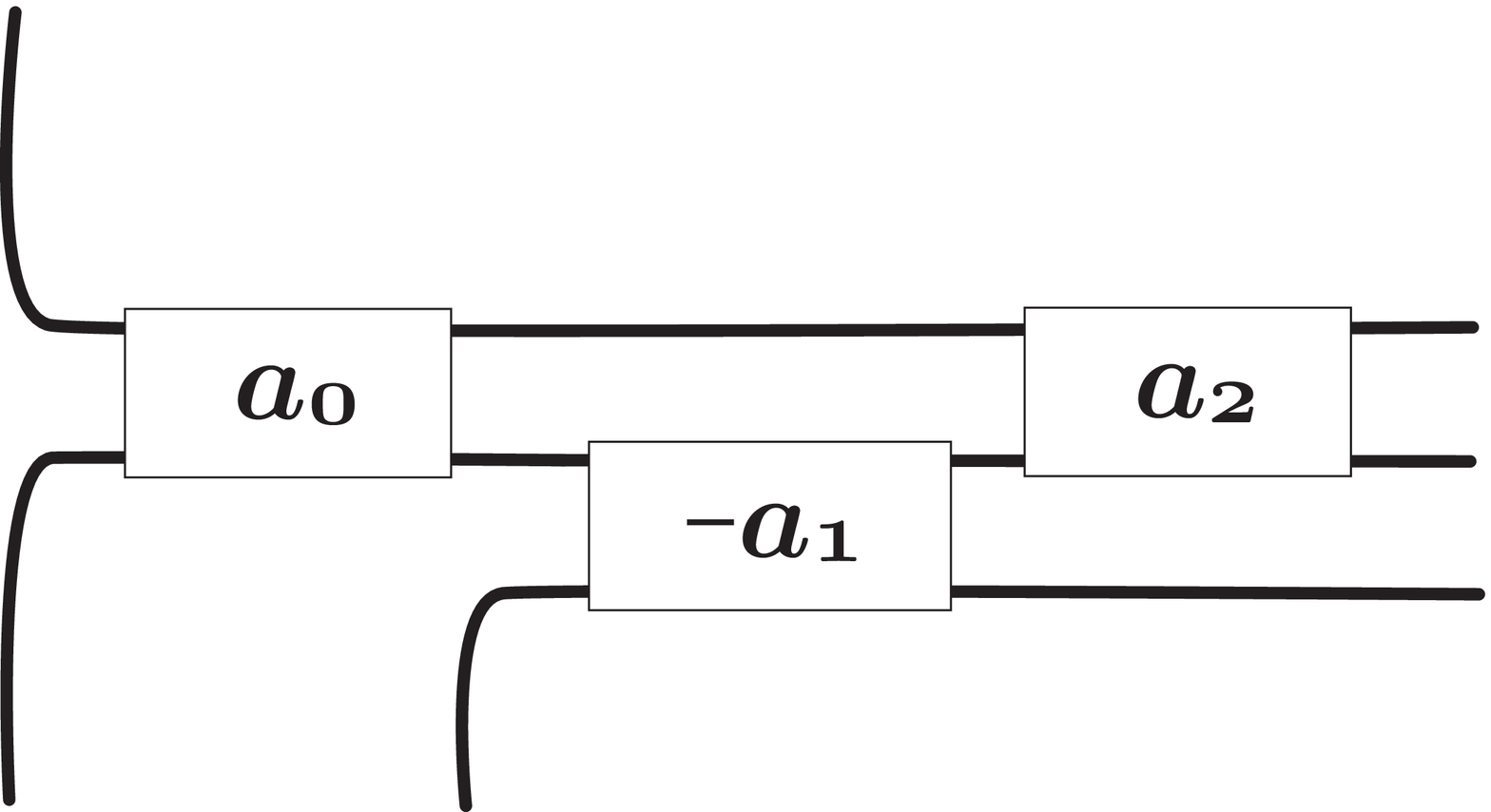}} \ . 
$$

\par \medskip 
For a $(2, 2)$-tangle diagram $T$ the denominator $D(T)$ and the numerator $N(T)$ are links obtained by closing the edge points as Figure~\ref{fig3}. 
We frequently use the same symbols as their diagrams. 

\begin{figure}[hbtp]
\center\includegraphics[height=4cm]{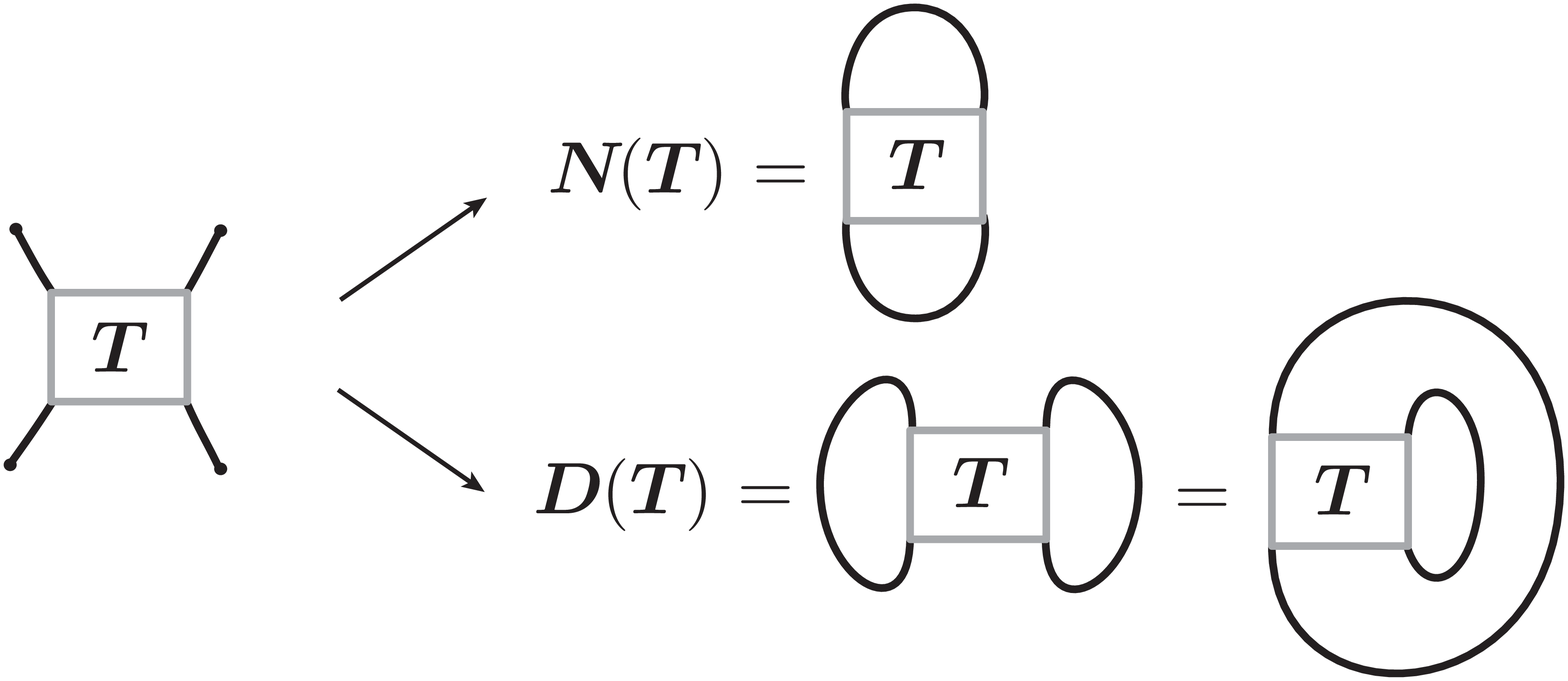}
\caption{the denominator $D(T)$ and the numerator $N(T)$}\label{fig3}
\end{figure}

\par \medskip 
A \textit{rational link} or a \textit{two-bridge link} is a link which is equivalent to the denominator of a rational tangle. 
Such links was classified by Schubert~\cite{Schubert} in 1956. 
The unoriented version of the classification result is as follows. 

\par \medskip 
\begin{thm}[{\bf Schubert}]\label{3-1}
For rational numbers $\alpha =\frac{p}{q}$ and $\beta =\frac{p^{\prime}}{q^{\prime}}$, 
the rational links $D(T(\alpha ))$ and $D(T(\beta ))$ are isotopic as unoriented links if and only if the following two conditions are satisfied: 
\begin{enumerate}
\item[$(1)$] $q=q^{\prime}$, 
\item[$(2)$] $pp^{\prime}\equiv 1\ (\text{mod}\ q)$ or $p\equiv p^{\prime}\ (\text{mod}\ q)$. 
\end{enumerate}
\end{thm}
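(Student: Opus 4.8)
The plan is to split the stated equivalence into its two directions and, for the hard direction, to pass to the double branched cover. Recall the standard fact that the double cover of $S^3$ branched along the rational link $D(T(\frac{p}{q}))$ is the lens space $L(q,p)$: cutting $S^3$ along the Conway sphere that separates the two trivial tangles of a two-bridge presentation, each trivial tangle lifts to a solid torus, and the two solid tori are glued along their common boundary torus by the slope recorded by $\frac{p}{q}$. This reduces the classification of the links downstairs to the homeomorphism classification of lens spaces upstairs, the connecting thread being that an ambient isotopy of $S^3$ lifts to a homeomorphism of branched covers.

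For the \emph{sufficiency} of the two conditions I would argue directly in $S^3$, without leaving it. If $p\equiv p'\pmod q$ with $q=q'$, then after reducing to representatives the two fractions differ only in the integer part $a_0$ of their continued fractions; the extra integral twists this inserts into the tangle are undone by the closure through Reidemeister~I moves, so $D(T(\frac{p}{q}))=D(T(\frac{p'}{q}))$. If instead $pp'\equiv 1\pmod q$, I would recognize $\frac{p'}{q}$ as $r(\frac{p}{q})$ in the sense of Lemma~\ref{1-2}: for the parents $(\frac{x}{r},\frac{y}{s})$ of $\frac{p}{q}$ the Farey relation $ry-xs=1$ gives $pr=xr+yr=xr+(1+xs)=x(r+s)+1=xq+1\equiv 1\pmod q$, so $r(\frac{p}{q})=\frac{r}{q}$ is exactly the numerator inverse to $p$. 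Since the operation $r$ reverses the $LR$ word, it corresponds to rotating the rational tangle in the plane, and a flype argument shows the closure link is unchanged; hence $D(T(\frac{p}{q}))=D(T(r(\frac{p}{q})))$.

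For the \emph{necessity} I would use the branched cover. If $D(T(\frac{p}{q}))$ and $D(T(\frac{p'}{q'}))$ are isotopic as unoriented links in oriented $S^3$, the isotopy lifts to an orientation-preserving homeomorphism $L(q,p)\cong L(q',p')$, and the Reidemeister--Franz classification of lens spaces forces $q=q'$ together with $p'\equiv p^{\pm1}\pmod q$, which is exactly conditions $(1)$ and $(2)$. The subtlety is that an abstract homeomorphism of the covers need not a priori descend to a map of the branched pairs; to close this gap one invokes the fact that the involution realizing $S^3$ as the quotient of $L(q,p)$ is unique up to conjugation, so that the homeomorphism can be arranged to respect the branch loci and thus to push down to a homeomorphism of $S^3$ carrying one link to the other.

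The step I expect to be the genuine obstacle is precisely this last point in the necessity direction: identifying the cover as $L(q,p)$ is bookkeeping and the lens-space classification is classical, but deducing \emph{link} equivalence from \emph{manifold} homeomorphism requires controlling the branching data, i.e. the essentially unique involution of $L(q,p)$ and its quotient. The remaining reconciliation is one of orientations: the lens-space ambiguity $p'\equiv\pm p^{\pm1}$ collapses to $p'\equiv p^{\pm1}$ for links of a fixed chirality, with the two $-$ cases corresponding to the mirror image, equivalently to the operations $i$ and $ir$ of Lemma~\ref{1-2}. Tracking the orientation of $S^3$ through the cover is what separates the ``same link'' numerators $p,r$ (with $pr\equiv1$) from the ``mirror'' numerators $q-p,s$ (with $p(q-p)\equiv -p^2$ and $ps\equiv -1$), and matches the earlier observation that the four Kauffman brackets agree only up to $A\leftrightarrow A^{-1}$.
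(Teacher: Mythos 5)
The paper contains no proof of Theorem~\ref{3-1}: it is quoted as Schubert's 1956 classification of unoriented two-bridge links, with the citation \cite{Schubert}, and everything the paper actually argues (Theorem~\ref{3-2} and Remark~\ref{3-4}) sits downstream of it as a black box. So there is no in-paper argument to compare yours against; what you have written is a reconstruction of the standard modern proof of a classical theorem the authors deliberately import. Within that understanding your sketch is essentially sound. Sufficiency by direct tangle moves is the right choice: for $p\equiv p'\pmod q$ the two tangles differ by integral twists absorbed by the denominator closure, and for $pp'\equiv 1\pmod q$ your Farey computation correctly identifies $\frac{p'}{q}$ with $r(\alpha)=\frac{r}{q}$ (it reproduces the computation inside the paper's proof of Theorem~\ref{3-2}), after which the palindrome/rotation argument is exactly the content of Remark~\ref{3-4}. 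Necessity by lifting an ambient isotopy to the double branched covers $L(q,p)$ and invoking the Reidemeister--Franz classification is the standard route.

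One point needs untangling. The uniqueness of the covering involution (Hodgson--Rubinstein) is required only when one wants to \emph{descend} from an abstract homeomorphism of lens spaces to an equivalence of the branch links, i.e.\ if sufficiency were also being run through the covers. In your architecture necessity starts from a homeomorphism of \emph{pairs} $(S^3,L)\cong(S^3,L')$, which lifts to the branched covers with no equivariance input at all, and sufficiency is carried out entirely in $S^3$; so the step you single out as ``the genuine obstacle'' is in fact not used anywhere in your proof, and the paragraph about arranging the homeomorphism to respect the branch loci should be deleted rather than justified. Your final orientation bookkeeping is correct and worth keeping: an isotopy of $S^3$ lifts to an orientation-preserving homeomorphism of $L(q,p)$, which is why one gets $p'\equiv p^{\pm1}$ rather than $\pm p^{\pm1}$, the excluded sign being precisely the mirror cases $i(\alpha)$ and $(ir)(\alpha)$ that reappear in Theorem~\ref{3-2}.
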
 

\par \medskip 
For a $(2, 2)$-tangle diagram $T$, we denote by $\overline{T}$ the mirror image of $T$, that is obtained by changing over and under at all crossings. 
Theorem~\ref{3-1} can be reformulated in terms of the operations $i, r, ir$ as follows. 

\par \medskip \noindent 
\begin{thm}\label{3-2}
Let $\alpha =\frac{p}{q}\ (q\geq 2)$ be a rational number in $(0,1)$, and $p^{\prime}\in \{ 1, \ldots , q-1\}$. 
We set $\beta =\frac{p^{\prime}}{q}$. 
Then the rational link $D(T(\beta ))$ is isotopic to $D(T(\alpha ))$ or  $D\bigl(\overline{T(\alpha )}\bigr)$ as unoriented links if and only if 
$\beta $ coincides with one of $\alpha , i(\alpha ), r(\alpha ), (ir)(\alpha )$. 
\end{thm}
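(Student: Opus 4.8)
The plan is to reduce the entire statement to congruences modulo $q$ and then invoke Schubert's classification (Theorem~\ref{3-1}). Since $\alpha=\frac{p}{q}$ and $\beta=\frac{p'}{q}$ share the denominator $q$, condition $(1)$ of Theorem~\ref{3-1} is automatic, so the only content is the arithmetic condition $(2)$. First I would record the residues modulo $q$ of the numerators of the four fractions $\alpha, i(\alpha), r(\alpha), (ir)(\alpha)$, which by Lemma~\ref{1-2} are $p,\ q-p,\ r,\ s$. Clearly $p\equiv p$ and $q-p\equiv -p\pmod q$. For the remaining two I would use the parents $\bigl(\frac{x}{r},\frac{y}{s}\bigr)$ of $\alpha$ with $\frac{x}{r}<\frac{y}{s}$: being Farey neighbors they satisfy $ry-xs=1$, and being the parents of $\alpha$ they satisfy $x+y=p$ and $r+s=q$. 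Then
\[
rp=r(x+y)=rx+ry=rx+(1+xs)=x(r+s)+1=xq+1\equiv 1\pmod q,
\]
and similarly $sp=s(x+y)=(ry-1)+sy=y(r+s)-1=yq-1\equiv -1\pmod q$. Hence the four numerators are congruent to $p,\ -p,\ p^{-1},\ -p^{-1}$ modulo $q$, where $p^{-1}$ denotes the inverse of $p$ in $(\mathbb{Z}/q\mathbb{Z})^{\times}$.

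Next I would deal with the mirror image. Taking the mirror of a rational tangle negates its fraction, and for two-bridge links this is the standard fact that the mirror of the link with Schubert parameters $(q,p)$ is the link with parameters $(q,q-p)$; since $i(\alpha)=\frac{q-p}{q}$, this gives $D\bigl(\overline{T(\alpha)}\bigr)=D\bigl(T(i(\alpha))\bigr)$ as unoriented links. Consequently $D(T(\beta))$ is isotopic to $D(T(\alpha))$ or to $D\bigl(\overline{T(\alpha)}\bigr)$ exactly when it is isotopic to $D(T(\alpha))$ or to $D\bigl(T(i(\alpha))\bigr)$. Applying Theorem~\ref{3-1} to each of these two comparisons (both legitimate, as all denominators equal $q$) turns the geometric condition into the single arithmetic statement
\[
p'\equiv \pm p \pmod q \quad\text{or}\quad pp'\equiv \pm 1 \pmod q .
\]

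Finally I would match the two sides. The congruence $p'\equiv \pm p$ says $p'$ is congruent to the numerator of $\alpha$ or of $i(\alpha)$; the congruence $pp'\equiv 1$ says $p'\equiv p^{-1}\equiv r$ and $pp'\equiv -1$ says $p'\equiv -p^{-1}\equiv s$, i.e.\ $p'$ is congruent to the numerator of $r(\alpha)$ or of $(ir)(\alpha)$. Thus the arithmetic condition holds if and only if $p'$ is congruent modulo $q$ to one of $p,\ q-p,\ r,\ s$. Because each of $p',\ p,\ q-p,\ r,\ s$ lies in $\{1,\ldots,q-1\}$ (for $r,s$ this follows from $r+s=q$ together with $\gcd(r,q)=\gcd(s,q)=1$, which the congruences above guarantee), a congruence modulo $q$ between two such numbers forces equality. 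Therefore $\beta=\frac{p'}{q}$ equals one of $\alpha, i(\alpha), r(\alpha), (ir)(\alpha)$, which is exactly the claim.

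The step I expect to be most delicate is the identification $D\bigl(\overline{T(\alpha)}\bigr)=D\bigl(T(i(\alpha))\bigr)$: it hinges on fixing a convention under which $D\bigl(T(\frac{p}{q})\bigr)$ realizes the two-bridge link $\mathfrak{b}(q,p)$ and on the behavior of the mirror operation under the continued-fraction/tangle correspondence, so some care (or an explicit appeal to the tangle calculus) is needed to ensure the parameter transforms as $p\mapsto q-p$ rather than, say, $p\mapsto p^{-1}$. By contrast, the Farey computation giving $rp\equiv 1$ and $sp\equiv -1\pmod q$ is the arithmetic heart of the argument but is entirely elementary once the parent relations $x+y=p$, $r+s=q$, $ry-xs=1$ are in hand.
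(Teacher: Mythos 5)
Your proposal is correct and follows essentially the same route as the paper: both reduce everything to Schubert's theorem via the identity $D\bigl(\overline{T(\alpha)}\bigr)=D\bigl(T\bigl(\tfrac{-p}{q}\bigr)\bigr)$ and the Farey-parent computations $rp\equiv 1$, $sp\equiv -1\pmod q$. The only (cosmetic) difference is in the converse, where the paper reconstructs the parents of $\alpha$ explicitly from $pp'=xq\pm 1$, while you instead observe that the four numerators already realize the residue classes $\pm p,\pm p^{-1}$ and that congruence between elements of $\{1,\ldots,q-1\}$ forces equality.
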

\begin{proof} 
Let $( \frac{x}{r}, \frac{y}{s})$ be the pair of parents of $\alpha $. 
\par 
$\bullet$ If $\beta =i(\alpha )$, then $\beta =\frac{q-p}{q}$ and $q-p\equiv -p\ (\text{mod}\ q)$. 
By Theorem~\ref{3-1},  
$D(T(i(\alpha )))$ is isotopic to $D\bigl( T\bigl(\frac{-p}{q}\bigr)\bigr) =D(T(-\alpha ))=D\bigl(\overline{T(\alpha )}\bigr)$ as unoriented links. 
\par 
$\bullet$ If $\beta =(ir)(\alpha )$, then $\beta =\frac{s}{q}$. 
Since
$$pr=(x+y)s=ry-1+ys=-1+(r+s)y=-1+qy \equiv -1\ \ (\text{mod}\ q),$$
it follows from Theorem~\ref{3-1} that $D\bigl( T\bigl( (ir)(\alpha )\bigr)\bigr)$ is isotopic to $D\bigl( T\bigl(\frac{-p}{q}\bigr)\bigr) =D(T(-\alpha ))=D\bigl(\overline{T(\alpha )}\bigr)$ as unoriented links. 
\par 
$\bullet$ If $\beta =r(\alpha )$, then $\beta =\frac{s}{q}$. 
Since 
$$pr=(x+y)r=xr+1+xs=x(r+s)+1=qx+1 \equiv 1\ \ (\text{mod}\ q)$$
it follows from Theorem~\ref{3-1} that $D\bigl( T\bigl( r(\alpha )\bigr)\bigr)$ is isotopic to $D(T(\alpha ))$ as unoriented links. 
\par 
Conversely, assume that $p^{\prime}\in \{ 1, \ldots , q-1\}$ satisfies $pp^{\prime}\equiv \pm 1\ (\text{mod}\ q)$ or $p\equiv \pm p^{\prime} \ (\text{mod}\ q)$. 
\par 
$\bullet$ If $pp^{\prime}\equiv 1 \ (\text{mod}\ q)$, then $pp^{\prime}=xq+1$ for some $x\in \mathbb{Z}$. 
Since $pp^{\prime}>0$ and $p^{\prime}<q$, it follows that $p>x\geq 0$. 
Furthermore, 
$\frac{p}{q}=\frac{x}{p^{\prime}}\sharp \frac{p-x}{q-p^{\prime}}$
and 
$p^{\prime}(p-x)-x(q-p^{\prime})=p^{\prime}p-xq=1$. 
Thus $( \frac{x}{p^{\prime}}, \frac{p-x}{q-p^{\prime}})$ is the pair of parents of $\alpha $, and 
$r(\alpha )=\frac{q-p^{\prime}}{q},\ (ir)(\alpha )=\frac{p^{\prime}}{q}$. 
\par 
$\bullet$ If $pp^{\prime}\equiv -1\ (\text{mod}\ q)$, then  $pp^{\prime}=yq-1$ for some $y\in \mathbb{Z}$. 
Since $pp^{\prime}>0$ and $p^{\prime}<q$, it follows that $p>y\geq 0$. 
Furthermore, 
$\frac{p}{q}=\frac{p-y}{q-p^{\prime}}\sharp \frac{y}{p^{\prime}}$
and 
$(q-p^{\prime})y-(p-y)p^{\prime}=qy-p^{\prime}p=1$. 
Thus $( \frac{p-y}{q-p^{\prime}}, \frac{y}{p^{\prime}})$ is the pair of parents of $\alpha $, and 
$r(\alpha )=\frac{p^{\prime}}{q},\ (ir)(\alpha )=\frac{q-p^{\prime}}{q}$. 
\par 
$\bullet$ If $p\equiv p^{\prime}\ (\text{mod}\ q)$, then $p=p^{\prime}$ since $p, p^{\prime}\in \{ 1, \ldots , q-1\}$. 
Therefore, $\alpha =\frac{p^{\prime}}{q}$. 
\par 
$\bullet$ If $p\equiv -p^{\prime}\ (\text{mod}\ q)$, then 
$q-p^{\prime}\equiv p \ (\text{mod}\ q)$. 
Since $p^{\prime}\in \{ 1, \ldots , q-1\}$, it follows that $q-p^{\prime}=p$. 
Thus $i(\alpha )=\frac{q-p}{q}=\frac{p^{\prime}}{q}$. 
\end{proof} 

\par \medskip 
As a corollary of Theorem~\ref{3-2} we have: 

\begin{thm}\label{3-3}
There is a one-to-one correspondence between the Conway-Coxeter friezes of zigzag-type and the sets of pairs of unoriented rational links $\{ D(T(\alpha )), D\bigl(\overline{T(\alpha )}\bigr)\} $. 
\end{thm}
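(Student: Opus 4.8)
The plan is to read Theorem~\ref{3-3} as an assembly of the orbit description of CCFs from Section~2 and the equivalence criterion of Theorem~\ref{3-2}. First I would recall that, by the discussion closing Section~2, a CCF of zigzag type — considered up to the equivalence generated by the horizontal and vertical reflections — is determined by, and in fact corresponds bijectively to, the set $\{\alpha, i(\alpha), r(\alpha), (ir)(\alpha)\}$ attached to a rational number $\alpha\in\mathbb{Q}\cap(0,1)$. Since $i$ and $r$ are commuting involutions, these sets are exactly the orbits of the Klein four-group $\langle i,r\rangle$ acting on $\mathbb{Q}\cap(0,1)$. Thus it suffices to prove that the assignment sending the orbit of $\alpha$ to the unordered pair $\{D(T(\alpha)),D\bigl(\overline{T(\alpha)}\bigr)\}$ is a well-defined bijection onto the pairs consisting of an unoriented rational link and its mirror image.

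For well-definedness I would invoke the isotopies recorded in the proof of Theorem~\ref{3-2}: there $D(T(r(\alpha)))$ is shown isotopic to $D(T(\alpha))$, while $D(T(i(\alpha)))$ and $D(T((ir)(\alpha)))$ are both isotopic to $D\bigl(\overline{T(\alpha)}\bigr)$. Hence replacing $\alpha$ by any other member of its orbit leaves the unordered pair unchanged; for instance $D(T(i(\alpha)))=D\bigl(\overline{T(\alpha)}\bigr)$ and $D\bigl(\overline{T(i(\alpha))}\bigr)=D(T(\alpha))$ merely interchange the two entries. Surjectivity is then immediate: every unoriented rational link is $D(T(\alpha))$ for some $\alpha=p/q$ with $0<p<q$ by Schubert's normal form, so its mirror pair is the image of the orbit of $\alpha$, i.e. of the CCF $\varGamma(w(\alpha))$.

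Injectivity is where Theorem~\ref{3-2} carries the weight. Suppose the orbits of $\alpha=\tfrac{p}{q}$ and $\alpha'=\tfrac{p'}{q'}$ produce the same pair, so that $D(T(\alpha'))$ is isotopic to $D(T(\alpha))$ or to $D\bigl(\overline{T(\alpha)}\bigr)$. By Schubert's theorem (Theorem~\ref{3-1}), isotopic rational links share the same denominator, so $q=q'$; this lets me apply Theorem~\ref{3-2} with the common denominator $q$, and it yields $\alpha'\in\{\alpha,i(\alpha),r(\alpha),(ir)(\alpha)\}$. Therefore $\alpha'$ lies in the orbit of $\alpha$, the two orbits coincide, and the corresponding CCFs are equivalent.

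The main obstacle I anticipate is bookkeeping rather than any new idea: one must match the four-element orbit on the rational-number side with the two-element pair (link and mirror) on the link side, keeping careful track of which of $i,r,ir$ preserve the link and which pass to the mirror, and one must explicitly use the same-denominator clause of Theorem~\ref{3-1} in order to legitimately reduce the injectivity step to Theorem~\ref{3-2}, whose hypothesis presupposes a fixed common denominator. A minor point to address cleanly is that the orbit $\{\alpha,i(\alpha),r(\alpha),(ir)(\alpha)\}$ may have fewer than four distinct elements, but this causes no difficulty since the argument only uses membership in the orbit.
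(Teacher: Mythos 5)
Your proposal is correct and follows essentially the same route as the paper: the paper states Theorem~\ref{3-3} simply as a corollary of Theorem~\ref{3-2}, relying implicitly on the Section~2 fact that a CCF of zigzag type is determined by the orbit $\{\alpha, i(\alpha), r(\alpha), (ir)(\alpha)\}$, and your write-up just makes the well-definedness, surjectivity, and injectivity steps of that corollary explicit. Nothing in your argument deviates from or adds to what the paper intends.
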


\begin{rem}\label{3-4}
For a $(2, 2)$-tangle diagram $T$, denote by $T^{\textrm{in}}$ the new tangle diagram which is obtained by turning $\overline{T}$ to $90$ degree. 
If $T=T(\alpha )$ for an $\alpha =[0, a_1, \ldots , a_n]$, where $n$ is even, then 
we see that as unoriented links 
\begin{align}
D\bigl( T\bigl( i( \alpha )\bigr)\bigr)
&\sim D\bigl( (T^{\mathrm{in}}\bowtie [-1])\ast [1]\bigr) 
\sim\ \raisebox{-1cm}{\includegraphics[height=2.7cm]{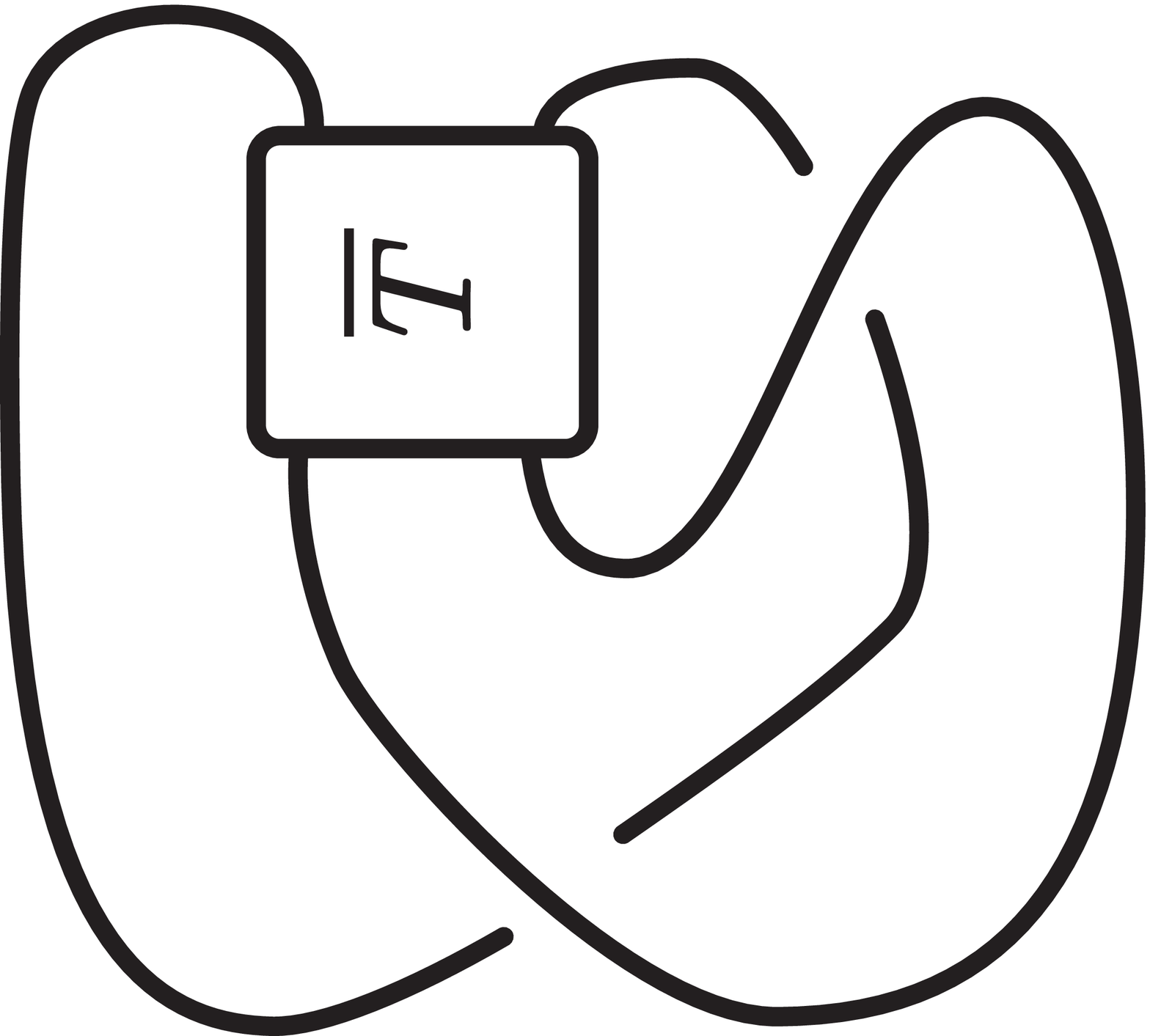}} \label{eq_Rem1}\\[0.3cm]  
&\sim\ \raisebox{-0.6cm}{\includegraphics[height=1.5cm]{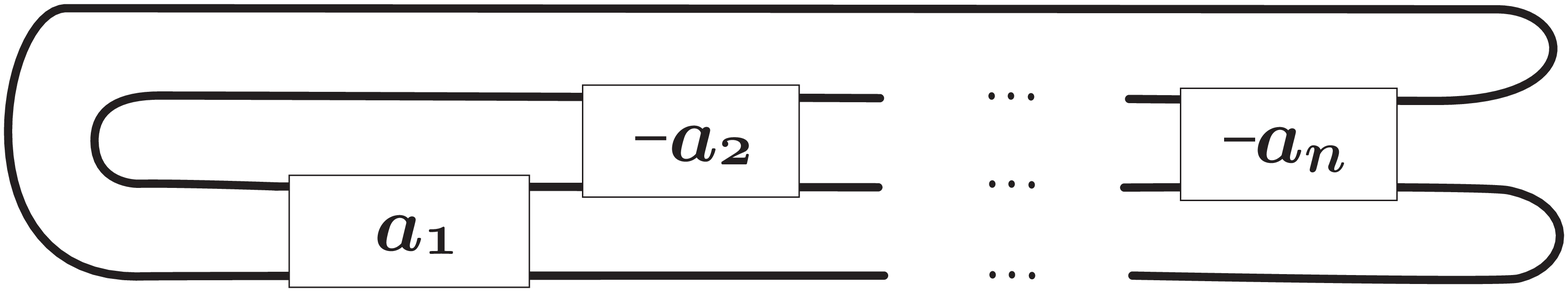}} 
\sim  D\bigl( \overline{T(\alpha )}\bigr), \notag 
\end{align} 
where $\sim$ means regular isotopic on the $2$-dimensional sphere $\mathbb{S}^2$, 
and $\bowtie $ and $\ast$ mean the sum and the product operations on tangle diagrams, respectively. 
The equivalence \eqref{eq_Rem1} follows from Lemma~\ref{1-4}(1) and the Conway's classification result on rational tangles \cite{Conway}.   
\par 
If we set $T(\alpha )^{\mathrm{pal}}=T([0, a_n, \ldots , a_1])$, called the palindrome of $T(\alpha )$, then 
\begin{align}
D\bigl( T\bigl( (ir)( \alpha )\bigr)\bigr) 
&= D\bigl( T(\alpha )^{\mathrm{pal}}\bigr) \label{eq_Rem2}\\[0.3cm]  
&= \ \raisebox{-0.6cm}{\includegraphics[height=1.5cm]{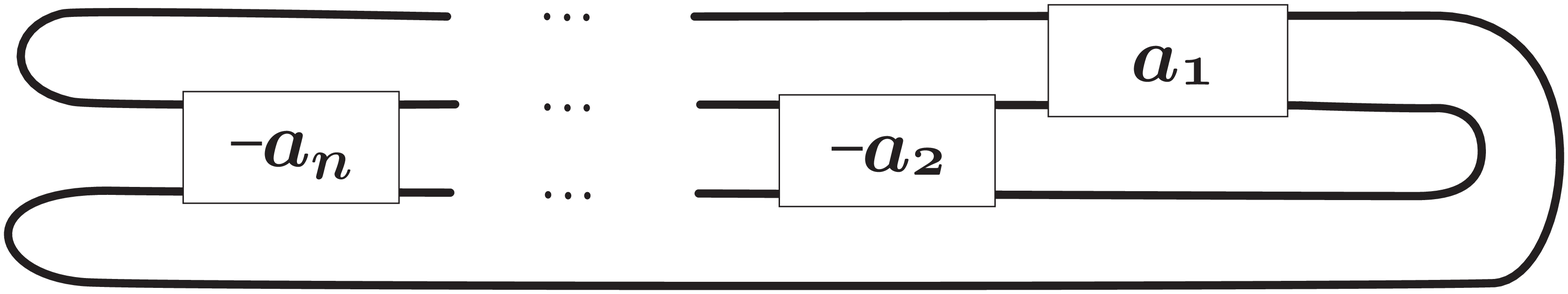}} 
\ =\ D\bigl(\overline{T(\alpha )}\bigr) . \notag 
\end{align} 

See the proof of \cite[Theorem 3.12]{Kogiso-Wakui} for \eqref{eq_Rem2}.  
\par 
From the above observation we also conclude that $D\bigl( T\bigl( r( \alpha )\bigr)\bigr) \sim D\bigl( T(\alpha )\bigr)$. 
\end{rem}

\section{Jones polynomials for the CCFs of zigzag type}
\par 
As an application of Theorem~\ref{3-3}, in this section, we show that ``Jones polynomial" for the CCFs of zigzag type can be defined. 
To describe the formulation we will need to introduce a convention of orientation for the rational links. 
Our convention is the completely same in \cite{LeeSchiffler}. 
\par 
Let $\alpha =\frac{p}{q}$ be an irreducible fraction. 
Note that if $q$ is odd, then the denominator $D( T(\alpha )) $ is a knot, and otherwise it is a two-component link.  
Moreover, we write $\alpha =[0, a_1, \ldots , a_n]$ for some $a_1, \ldots , a_n\in \mathbb{N}$, and 
choose an orientation for $D( T(\alpha )) $ as follows. 
\begin{list}{}{\labelsep=0.1cm \labelwidth=0.8cm \leftmargin=0.8cm}
\item[$\bullet$] If $q$ is odd and $n$ is even, then 
\begin{equation}\label{eq4.1}
D( T(\alpha )) =\ \raisebox{-0.6cm}{\includegraphics[height=1.5cm]{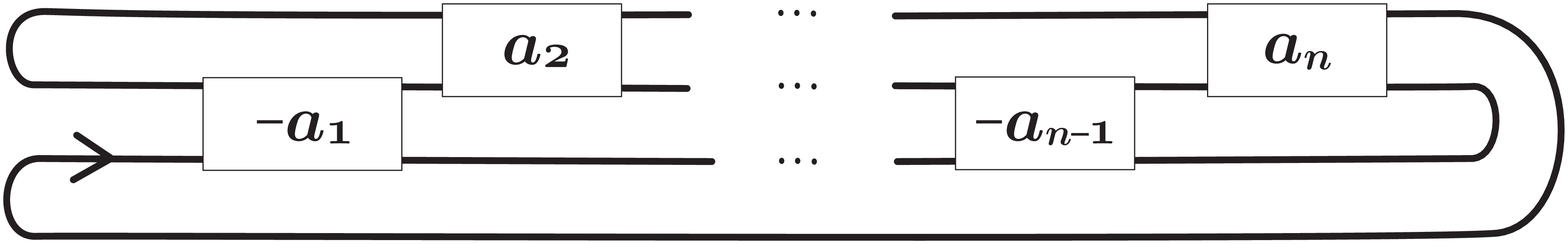}}\ .
\end{equation}
\item[$\bullet$] If $q$ and $n$ are even, then 
\begin{equation}\label{eq4.2}
D( T(\alpha )) =\ \raisebox{-0.6cm}{\includegraphics[height=1.5cm]{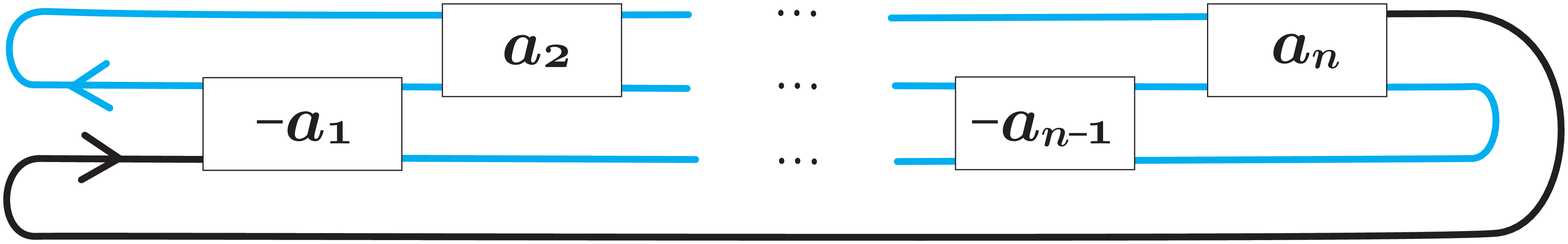}}\ .
\end{equation}
\item[$\bullet$] If $q$ and $n$ are odd, then 
\begin{equation}\label{eq4.3}
D( T(\alpha )) =\ \raisebox{-0.6cm}{\includegraphics[height=1.5cm]{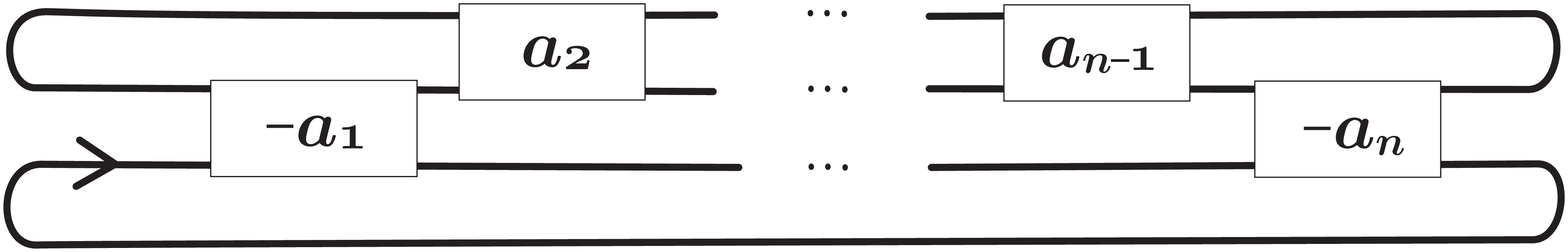}}\ .
\end{equation}
\item[$\bullet$] If $q$ is even and $n$ is odd, then 
\begin{equation}\label{eq4.4}
D( T(\alpha )) =\ \raisebox{-0.6cm}{\includegraphics[height=1.5cm]{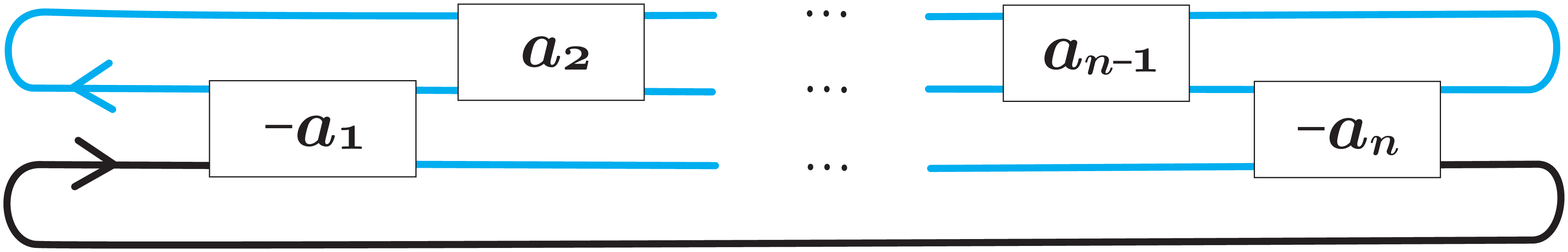}}\ .
\end{equation}
\end{list}

We denote by $\text{wr}(\alpha )$ the writhe of $D\bigl( T(\alpha )\bigr)$ with the above orientation. 

\par 
If $D( T(\alpha )) $ is a two-component link, namely $\alpha $ is $\frac{1}{0}$-type, then we have another orientation for $D( T(\alpha )) $ by changing the given orientation.  
We denote by $D_{+-}(T(\alpha )),\ D_{--}(T(\alpha ))$,\ $D_{-+}(T(\alpha ))$ the obtained links with the following new orientations, respectively. If $n$ is chosen as even, then

\begin{align}
D_{+-}( T( \alpha )) &=\ \raisebox{-0.6cm}{\includegraphics[height=1.5cm]{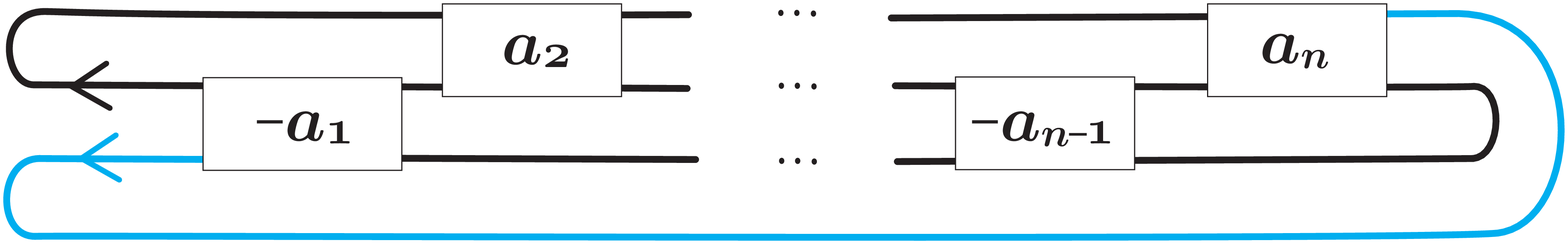}}\ , \\[0.5cm]  
D_{--}( T( \alpha )) &=\ \raisebox{-0.6cm}{\includegraphics[height=1.5cm]{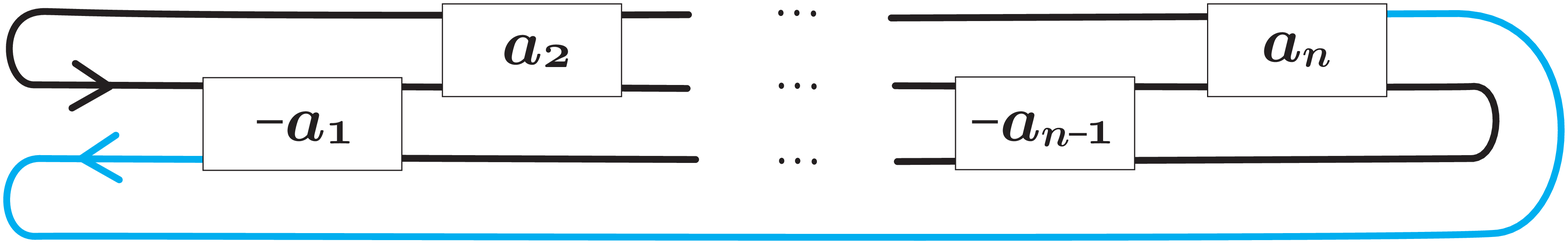}}\ ,  \\[0.5cm]  
D_{-+}( T( \alpha )) &=\ \raisebox{-0.6cm}{\includegraphics[height=1.5cm]{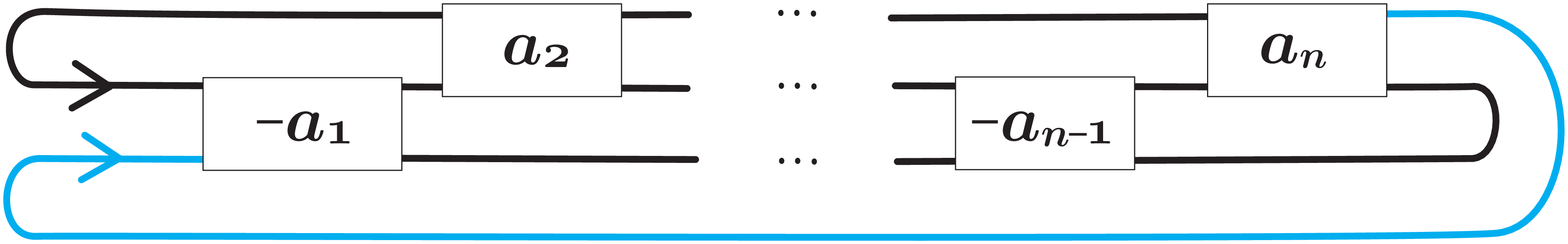}}\ . 
\end{align}

\par \medskip 
In the case where $n$ is chosen as odd, they are defined by similar oriented diagrams. 
\par 
We denote by $\text{wr}_{+-}(\alpha ), \text{wr}_{--}(\alpha ), \text{wr}_{-+}(\alpha )$ the writhes of $D_{+-}(T(\alpha )),\ D_{--}(T(\alpha )),\ D_{-+}(T(\alpha ))$, respectively. 

\par \smallskip 
\begin{lem}\label{4-1}
Let  $\alpha =[0, a_1, a_2, \ldots , a_n]$ be a rational number in $(0,1)$ of type $\frac{1}{0}$. 
\begin{enumerate}
\item[$(1)$] Assume that $n$ is even. 
If $N_0(a_1, \ldots , a_n)$ is even, then 
the oriented diagram $D_{+-}\bigl( T(\alpha )\bigr)$ is regular isotopic to 
$D\bigl( \overline{T\bigl( (ir)(\alpha )\bigr)}\bigr)$. 
If $N_0(a_1, \ldots , a_n)$ is odd, then 
$D_{+-}\bigl( T(\alpha )\bigr)$ is regular isotopic to $D_{-+}\bigl( \overline{T\bigl( (ir)(\alpha )\bigr)}\bigr)$ with orientation. 
Therefore, 
$$\mathrm{wr}_{+-}( \alpha ) 
=\begin{cases}
-\mathrm{wr}\bigl( (ir)(\alpha )\bigr)  & \text{if $N_0(a_1, \ldots , a_n)$ is even},\\ 
-\mathrm{wr}_{+-}\bigl( (ir)(\alpha )\bigr)  & \text{if $N_0(a_1, \ldots , a_n)$ is odd.}
\end{cases}$$
\item[$(2)$] Assume that $n$ is even. 
If $N_0(a_1, \ldots , a_n)$ is even, then 
the oriented diagram $D_{--}\bigl( T(\alpha )\bigr)$ is regular isotopic to 
$D_{-+}\bigl( \overline{T\bigl( (ir)(\alpha )\bigr)}\bigr)$. 
If $N_0(a_1, \ldots , a_n)$ is odd, then $D_{--}\bigl( T(\alpha )\bigr)$ is regular isotopic to 
$D\bigl( \overline{T\bigl( (ir)(\alpha )\bigr)}\bigr)$ with orientation. 
Therefore, 
$$\mathrm{wr}(\alpha )
=\begin{cases}
-\mathrm{wr}_{+-}\bigl( (ir)(\alpha )\bigr)  & \text{if $N_0(a_1, \ldots , a_n)$ is even},\\ 
-\mathrm{wr}\bigl( (ir)(\alpha )\bigr) & \text{if $N_0(a_1, \ldots , a_n)$ is odd.}
\end{cases}$$
\item[$(3)$] The oriented diagram $D\bigl( T(\alpha )\bigr)$ is regular isotopic to 
$D_{-+}\bigl( \overline{T\bigl( i(\alpha )\bigr)}\bigr)$ with orientation, and therefore 
$$\mathrm{wr}(\alpha ) =-\mathrm{wr}_{+-}\bigl( i(\alpha )\bigr) .$$
\end{enumerate} 
\end{lem}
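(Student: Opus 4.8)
The plan is to promote the \emph{unoriented} regular isotopies recorded in Remark~\ref{3-4} to \emph{oriented} ones and then read off the writhes, exploiting that a regular isotopy preserves the writhe together with two elementary facts about a $2$-component diagram $L$: passing to the mirror image reverses every crossing sign, so $\mathrm{wr}(\overline{L})=-\mathrm{wr}(L)$ for matching orientations; and reversing the orientations of both components simultaneously preserves every crossing sign, so $\mathrm{wr}_{++}=\mathrm{wr}_{--}$ and $\mathrm{wr}_{+-}=\mathrm{wr}_{-+}$. Since $\alpha$ is of type $\tfrac{1}{0}$ its denominator is even, so $D(T(\alpha))$ is genuinely a $2$-component link and all four orientation variants $D,D_{+-},D_{--},D_{-+}$ are defined; the same holds for $i(\alpha)=\tfrac{q-p}{q}$ and $(ir)(\alpha)$, which are again of type $\tfrac{1}{0}$.

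I would treat Part~(3) first, since it carries no $N_0$-dependence. Starting from the chain \eqref{eq_Rem1}, namely $D(T(i(\alpha)))\sim\cdots\sim D(\overline{T(\alpha)})$, I take mirror images to obtain $D(\overline{T(i(\alpha))})\sim D(T(\alpha))$ as unoriented diagrams, and then track the orientation of each strand through the $90^{\circ}$-turn $T^{\mathrm{in}}$ and the extra crossings $[\pm 1]$ occurring there, comparing with the explicit oriented pictures \eqref{eq4.1}--\eqref{eq4.4}. This shows the standard orientation on $D(T(\alpha))$ matches the $-+$ orientation on $D(\overline{T(i(\alpha))})$, and the two writhe rules then give $\mathrm{wr}(\alpha)=\mathrm{wr}\bigl(D_{-+}(\overline{T(i(\alpha))})\bigr)=-\mathrm{wr}_{-+}(i(\alpha))=-\mathrm{wr}_{+-}(i(\alpha))$.

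For Parts~(1) and~(2) I would use the palindrome identity $D(T((ir)(\alpha)))=D(T(\alpha)^{\mathrm{pal}})=D(\overline{T(\alpha)})$ from \eqref{eq_Rem2}, realized geometrically by flipping the tangle $T(\alpha)$. The crux is to decide which orientation variant of $D(\overline{T((ir)(\alpha))})$ the specified orientation ($D_{+-}$ in Part~(1), $D_{--}$ in Part~(2)) on $D(T(\alpha))$ matches after the flip. This is exactly where the parity of $N_0(a_1,\dots,a_n)$ enters: each even entry $a_i$ reverses the relative orientation of the two strands emerging from the corresponding twist region, so the relative orientation after the flip, hence the matching component-variant of the target, is governed by $N_0\bmod 2$. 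Invoking Lemma~\ref{1-5}, which ties the parity of $N_0$ to the parities of the parents' numerators for a $\tfrac{1}{0}$-type fraction, I match this geometric count with the combinatorial data and conclude that $D_{+-}(T(\alpha))$ is regular isotopic to $D(\overline{T((ir)(\alpha))})$ when $N_0$ is even and to $D_{-+}(\overline{T((ir)(\alpha))})$ when $N_0$ is odd, and symmetrically for $D_{--}(T(\alpha))$.

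The main obstacle I anticipate is precisely this orientation bookkeeping: verifying rigorously that $N_0\bmod 2$ governs the parallel/anti-parallel toggle, and thus the component matching, under the flip. I would handle it by induction on the number of Farey-sum operations building $\alpha$, mirroring the induction in the proof of Lemma~\ref{1-5}, so that the base case $\alpha=\tfrac{1}{2}$ and the two inductive cases $a_n\geq 2$ and $a_n=1$ align with that lemma and let the orientation data be transported uniformly. Once the correct variant is identified in each case, the two writhe rules convert each oriented regular isotopy into the displayed writhe identity, using $\mathrm{wr}(\alpha)=\mathrm{wr}_{--}(\alpha)$ in Part~(2).
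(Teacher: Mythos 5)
Your plan follows the same route as the paper's proof: both promote the unoriented palindrome/rotation isotopies of Remark~\ref{3-4} to oriented ones, let the parity of $N_0(a_1,\ldots ,a_n)$ decide whether the transported orientation lands on the standard or the $-+$ variant of the target diagram, and then read off the writhe identities from $\mathrm{wr}(\overline{L})=-\mathrm{wr}(L)$ and $\mathrm{wr}_{+-}=\mathrm{wr}_{-+}$. The only substantive difference is in the bookkeeping step you flag as the main obstacle: where you propose an induction on Farey sums parallel to Lemma~\ref{1-5}, the paper encodes the strand connectivity directly in the permutation $\sigma (\alpha )$ of a $3$-braid $B(\alpha )$ built from the twist regions (whose sign is $(-1)^{N_0}$ because $n$ is even), which pins down the component matching without any induction.
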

\begin{proof}
By Lemma~\ref{1-4} 
$(ir)(\alpha )=[0, a_n, \ldots , a_2, a_1]$. 
\par 
For an even integer $n$, let $B(\alpha )$ be the following $3$-braid: 
\begin{equation}
B(\alpha )  =\ \raisebox{-0.6cm}{\includegraphics[height=1.2cm]{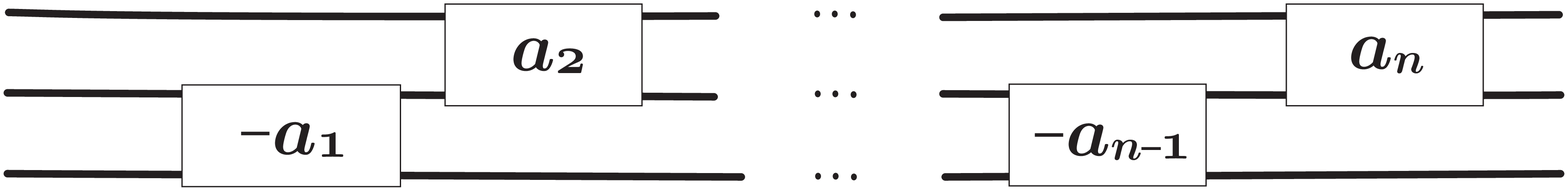}}\ . 
\end{equation}

\vspace{0.2cm} 
Assume that the both three terminal points of $B(\alpha )$ are numbered as $1,2,3$ beginning at the top. 
When the $j$th terminal point on the left is connected with the $i_j$th terminal point on the right by a strand of $B(\alpha )$,  
we set 
$$\sigma (\alpha ) :=\begin{pmatrix}
1 & 2 & 3 \\ 
i_1 & i_2 & i_3 \end{pmatrix}.$$
The diagram $D( T(\alpha ) )$ has two-component if and only if the number $3$ is sent to the number $1$ under this permutation $\sigma (\alpha ) $. 
\par 
(1) If the number $N_0(a_1, \ldots , a_n)$ is even, then $\sigma (\alpha )$ sends $1$ to $2$. 
The oriented diagram $D_{+-}( T(\alpha ))$ can be deformed as 
\par \medskip 
\begin{align*}
D_{+-}\bigl( T(\alpha )\bigr)
&=\ \raisebox{-0.6cm}{\includegraphics[height=1.5cm]{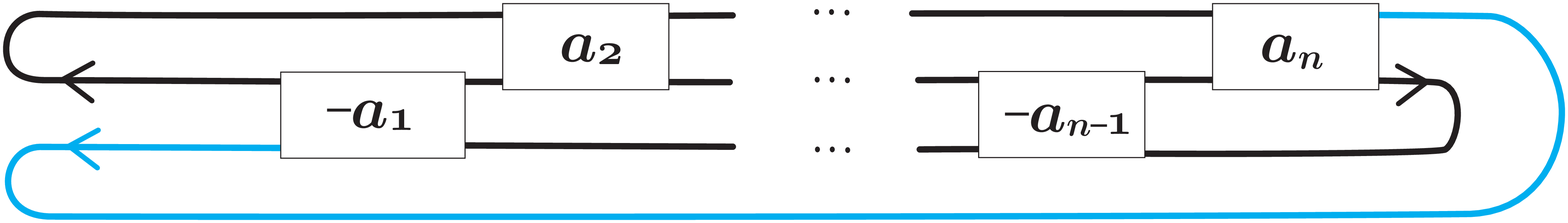}} \\[0.5cm] 
&\sim \ \raisebox{-0.6cm}{\includegraphics[height=1.5cm]{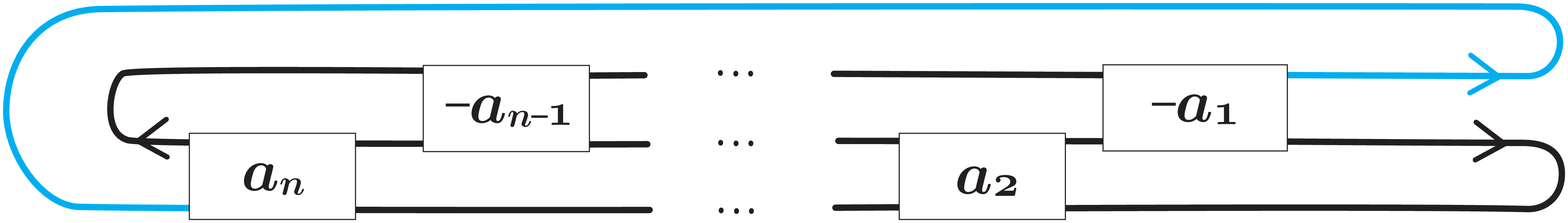}} \\[0.5cm]  
&\sim \ \raisebox{-0.6cm}{\includegraphics[height=1.6cm]{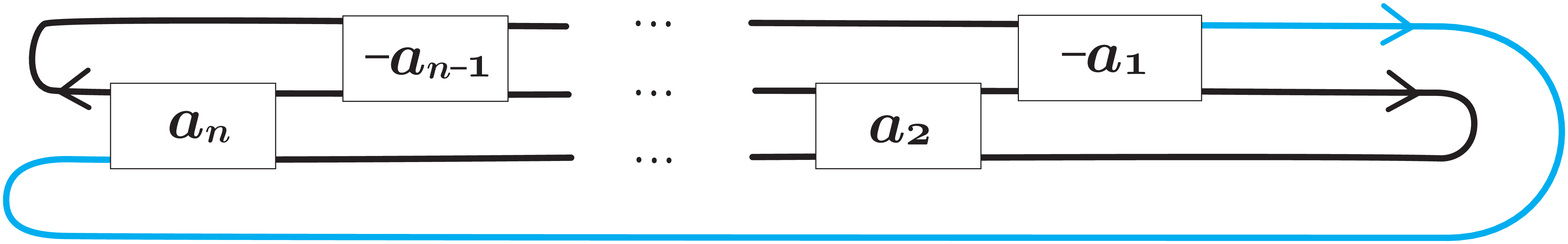}} \\[0.5cm]  
&\sim \ \raisebox{-0.6cm}{\includegraphics[height=1.45cm]{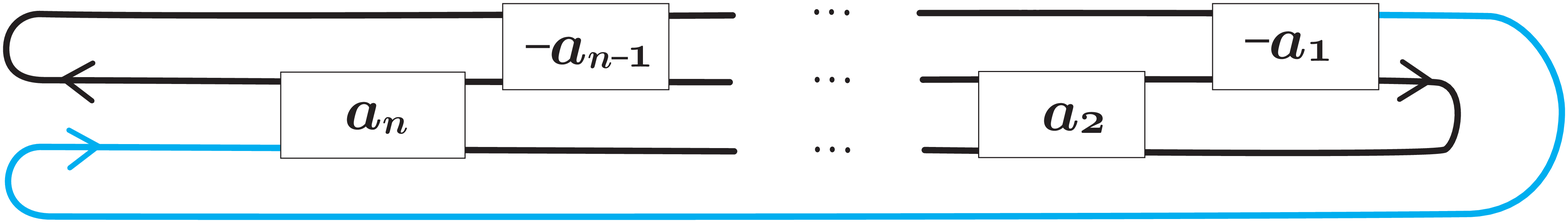}} \\[0.5cm]  
&\sim \ D\bigl(  \overline{T\bigl( (ir)(\alpha )\bigr)}\bigr) . 
\end{align*}
Thus we have 
$$\text{wr}_{+-}(\alpha ) 
=\text{wr}\Bigl( D\bigl( \overline{T\bigl( (ir)(\alpha )\bigr)}\bigr)\Bigr) 
=-\text{wr}\bigl( (ir)(\alpha )\bigr) .$$

If $N_0(a_1, \ldots , a_n)$ is odd, then $\sigma (\alpha )$ sends $1$ to $3$. 
By deforming $D_{+-}(T(\alpha ))$ in the same manner, we have 
\begin{align*}
D_{+-}(T(\alpha ))
&\sim\ 
\raisebox{-0.6cm}{\includegraphics[height=1.5cm]{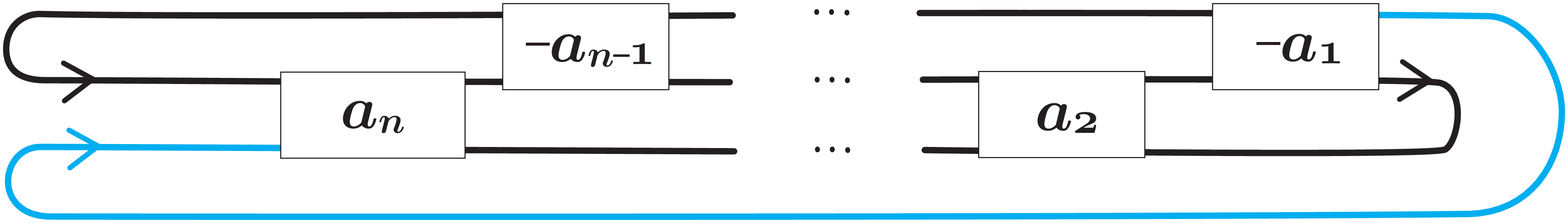}} \\[0.5cm] 
&\sim \ D_{-+}\bigl( \overline{T\bigl( (ir)(\alpha )\bigr)}\bigr) ,  
\end{align*}
and hence 
$$\text{wr}_{+-}(\alpha )
=\text{wr}\Bigl( D_{-+}\bigl( \overline{T\bigl( (ir)(\alpha )\bigr)}\bigr)\Bigr) 
=\text{wr}\Bigl( D_{+-}\bigl( \overline{T\bigl( (ir)(\alpha )\bigr)}\bigr)\Bigr) 
=-\text{wr}_{+-}\bigl( (ir)(\alpha )\bigr) . 
$$
\indent 
(2) By the same method if $N_0(a_1, \ldots , a_n)$ is even, then $D_{--}(T(\alpha ))\sim \ D_{-+}\bigl( \overline{T\bigl( (ir)(\alpha )\bigr)}\bigr)$, and hence 
$$\text{wr}(\alpha )
=\text{wr}_{--}(\alpha )
=\text{wr}\Bigl( D_{-+}\bigl( \overline{T\bigl( (ir)(\alpha )\bigr)}\bigr)\Bigr) 
=-\text{wr}_{+-}\bigl( (ir)(\alpha )\bigr) .$$
If $N_0(a_1, \ldots , a_n)$ is odd, then $D_{--}(T(\alpha ))\sim \ D\bigl( \overline{T\bigl( (ir)(\alpha )\bigr)}\bigr)$, and hence 
$$\text{wr}(\alpha ) 
=\text{wr}_{--}(\alpha ) 
=-\text{wr}\bigl( (ir)(\alpha )\bigr) .$$
\indent 
(3) Assume that $a_n\geq 2$. 
By Lemma~\ref{1-4}, $i(\alpha )=[0, 1, a_1-1, a_2, \ldots , a_n]$. 
\par 
If $n$ is even, then 
\begin{align*}
D_{-+}(T(i(\alpha )))
&=\ \raisebox{-0.6cm}{\includegraphics[height=1.5cm]{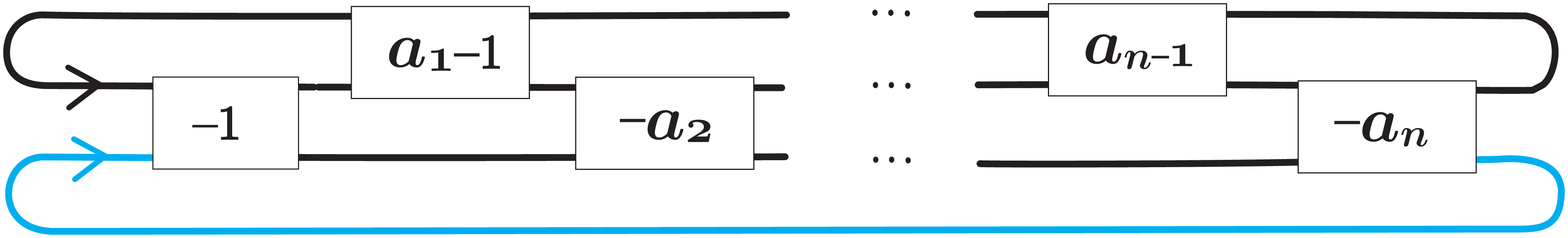}} \\[0.5cm] 
&\sim \ \raisebox{-0.6cm}{\includegraphics[height=1.5cm]{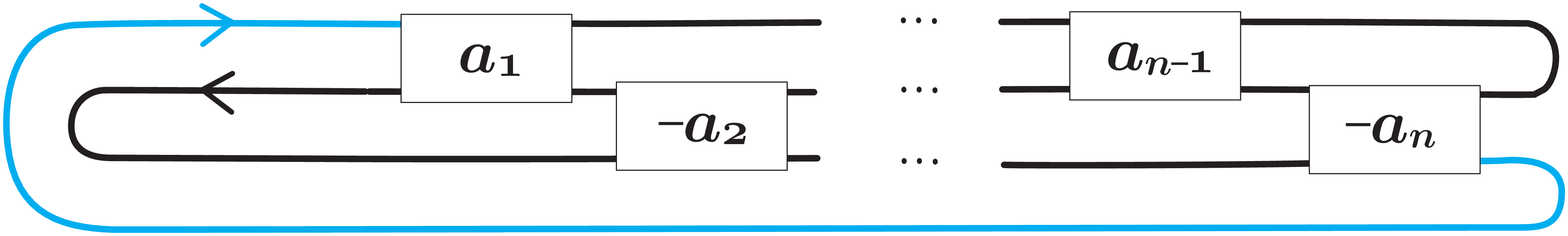}} \\[0.5cm]  
&\sim \ \raisebox{-0.6cm}{\includegraphics[height=1.5cm]{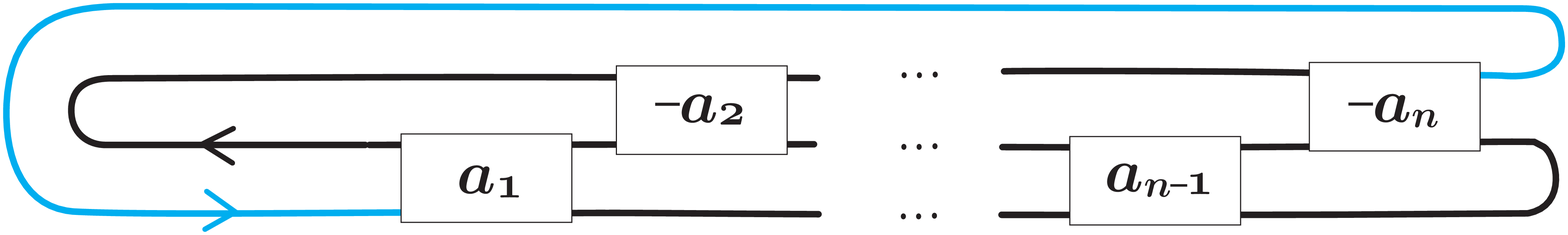}} \\[0.5cm]  
&\sim \ \raisebox{-0.6cm}{\includegraphics[height=1.45cm]{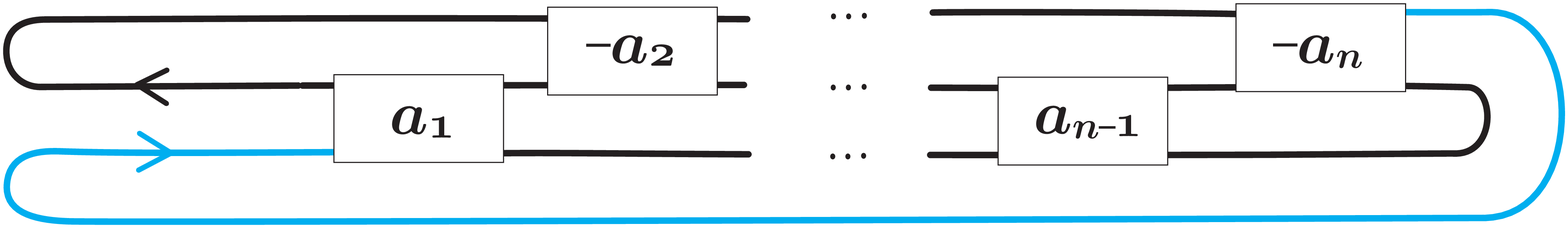}} \\[0.5cm]  
&\sim \ D\bigl( \overline{T(\alpha )}\bigr) .  
\end{align*}
Thus $D_{-+}\bigl(\overline{T(i(\alpha ))}\bigr)\sim \ D\bigl( T(\alpha )\bigr) $, 
and therefore 
$\text{wr}(\alpha ) =-\text{wr}_{-+}\bigl( i(\alpha )\bigr) =-\text{wr}_{+-}\bigl( i(\alpha )\bigr) $. 
\par 
In the case where $n$ is odd, applying the same method we have the same result. 
\end{proof} 

\par \smallskip 
\begin{lem}\label{4-2}
Let  $\alpha =\frac{p}{q}$ be a rational number in $(0,1)$, and assume that $q$ is odd. Then
$$\mathrm{wr}(\alpha )=\mathrm{wr}\bigl( (ir)(\alpha ) \bigr)=-\mathrm{wr}\bigl( i(\alpha )\bigr) .$$
\end{lem}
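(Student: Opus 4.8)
The plan is to exploit that $q$ is odd, so that $D(T(\alpha))$ is a knot rather than a two-component link. Since $i(\alpha)=\frac{q-p}{q}$ and $(ir)(\alpha)=\frac{s}{q}$ share the odd denominator $q$, the diagrams $D(T(i(\alpha)))$ and $D(T((ir)(\alpha)))$ are knot diagrams as well. For a knot the writhe is independent of the chosen orientation, so the distinction between the four orientation conventions \eqref{eq4.1}--\eqref{eq4.4} and the auxiliary writhes $\mathrm{wr}_{+-},\mathrm{wr}_{--},\mathrm{wr}_{-+}$ needed in Lemma~\ref{4-1} disappears: each of $\mathrm{wr}(\alpha),\mathrm{wr}(i(\alpha)),\mathrm{wr}((ir)(\alpha))$ is a single well-defined integer. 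Recording this reduction is the first step, and it is what makes the statement clean.

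For the equality $\mathrm{wr}(\alpha)=-\mathrm{wr}(i(\alpha))$ I would invoke the regular isotopy of Remark~\ref{3-4}. Writing $\alpha=[0,a_1,\ldots,a_n]$ with $n$ even, equation \eqref{eq_Rem1} gives $D(T(i(\alpha)))\sim D\bigl(\overline{T(\alpha)}\bigr)$ on $\mathbb{S}^2$. The writhe is invariant under regular isotopy on $\mathbb{S}^2$, and passing to the mirror image reverses every crossing sign, so $\mathrm{wr}\bigl(D(\overline{T(\alpha)})\bigr)=-\mathrm{wr}(\alpha)$; combining this with orientation-independence for knots yields $\mathrm{wr}(i(\alpha))=-\mathrm{wr}(\alpha)$. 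This half is short and I expect no difficulty.

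The equality $\mathrm{wr}(\alpha)=\mathrm{wr}((ir)(\alpha))$ is the substantive part. By Lemma~\ref{1-4}(2) one has $(ir)(\alpha)=[0,a_n,\ldots,a_1]$, so $D(T((ir)(\alpha)))=D\bigl(T(\alpha)^{\mathrm{pal}}\bigr)$ by definition of the palindrome. Here I would deliberately avoid arguing through the link type: Theorem~\ref{3-2} tells us that $(ir)(\alpha)$ presents the mirror of $D(T(\alpha))$, so any argument that quietly treated the writhe as a link invariant would produce the opposite sign. The key is that the writhe is only a regular-isotopy invariant, not an isotopy invariant, so the two specific diagrams $D(T(\alpha))$ and $D(T(\alpha)^{\mathrm{pal}})$ may carry the \emph{same} writhe even though they represent mirror links. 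I would therefore compute directly, following the deformation of the proof of Lemma~\ref{4-1}: realize $D(T(\alpha))$ from the analogous $3$-braid $B(\alpha)$, and compare the signed crossing contributions of $B(\alpha)$ with those of $B((ir)(\alpha))$, whose twist regions are exactly those of $B(\alpha)$ taken in reversed order. Because $q$ is odd, the permutation $\sigma(\alpha)$ does not send $3$ to $1$, i.e. the closure is connected, and the plan is to show that under the reversal $a_1,\ldots,a_n\mapsto a_n,\ldots,a_1$ each twist region retains its crossing sign, so that the two signed sums coincide and $\mathrm{wr}((ir)(\alpha))=\mathrm{wr}(\alpha)$.

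The main obstacle is precisely this last sign comparison. One must prove that reversing the order of the twist regions preserves, region by region, the sign of the crossings, and that sign depends on the horizontal/vertical alternation together with whether the two strands in the region run parallel or antiparallel. Controlling the parallel/antiparallel pattern is exactly what the parity bookkeeping of Lemma~\ref{1-5} supplies, through $N_0(a_1,\ldots,a_n)$ and the type of $\alpha$; I expect the argument to split, as in Lemma~\ref{4-1}, according to the parity of $N_0(a_1,\ldots,a_n)$, with the oddness of $q$ guaranteeing in each case that the two sign patterns agree under reversal. Checking this matching in every case is the delicate part of the proof.
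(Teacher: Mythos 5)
Your first step and your treatment of $i$ are fine and coincide with the paper: since $q$ is odd everything in sight is a knot diagram, the writhe is orientation-independent, and \eqref{eq_Rem1} gives $D(T(i(\alpha)))\sim D\bigl(\overline{T(\alpha)}\bigr)$, whence $\mathrm{wr}(i(\alpha))=-\mathrm{wr}(\alpha)$.

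The second half of your plan, however, is aimed at the wrong sign. The displayed equality $\mathrm{wr}(\alpha)=\mathrm{wr}\bigl((ir)(\alpha)\bigr)$ is a typo in the statement: the paper's own proof derives $\mathrm{wr}(\alpha)=\mathrm{wr}\bigl(D_{-}(T(\alpha))\bigr)=-\mathrm{wr}\bigl((ir)(\alpha)\bigr)$, and the proof of Lemma~\ref{4-5}(1) explicitly uses $\mathrm{wr}\bigl((ir)(\alpha)\bigr)=-\mathrm{wr}(\alpha)$ in the next step. Your own instinct — that the mirror relation of Theorem~\ref{3-2} and Remark~\ref{3-4} "would produce the opposite sign" — is exactly right, and that opposite sign is the correct one; you then discard it in favor of the printed sign. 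A concrete check: for $\alpha=\tfrac13$ one has $i(\alpha)=(ir)(\alpha)=\tfrac23$, so the printed chain would force $\mathrm{wr}(\tfrac23)=-\mathrm{wr}(\tfrac23)=0$, impossible for a $3$-crossing trefoil diagram. Consequently the delicate twist-region comparison you propose cannot succeed as designed: carried out correctly it must return $\mathrm{wr}\bigl((ir)(\alpha)\bigr)=-\mathrm{wr}(\alpha)$. The paper's actual argument is also much shorter than what you envisage: reverse the orientation of $D(T(\alpha))$ (harmless for a knot) and observe, by the same braid deformation used in Lemma~\ref{4-1}(1), that $D_{-}(T(\alpha))$ is regular isotopic to $D\bigl(\overline{T\bigl((ir)(\alpha)\bigr)}\bigr)$; taking writhes of the mirror gives the minus sign at once, with no case analysis on $N_0(a_1,\ldots,a_n)$ needed.
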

\begin{proof} 
Since $q$ is odd, $D(T(i(\alpha )))$ is a knot diagram. 
So, $D(T(i(\alpha )))\sim D(\overline{T(\alpha )})$ and 
$\text{wr}(\alpha ) =-\text{wr}\bigl( i(\alpha )\bigr)$. 
\par 
Let $D_{-}(T(\alpha ))$ be the oriented diagram $D(T(\alpha ))$ with the opposite orientation. 
By a similar manner in the proof of Lemma~\ref{4-1}(1), 
$D_{-}(T(\alpha ))$ is regular isotopic to $D\bigl( \overline{T\bigl( (ir)(\alpha )\bigr)}\bigr)$. 
It follows that 
$\text{wr}(\alpha )  =\text{wr}\bigl( D_{-}\bigl(T(\alpha )\bigr)\bigr) 
=-\text{wr}\bigl( (ir)(\alpha ) \bigr)$. 
\end{proof} 

Let $V(\alpha )$ be the Jones polynomial of the oriented link given by the diagram $D\bigl( T(\alpha )\bigr)$ with orientation given by \eqref{eq4.1} -- \eqref{eq4.4}. 
Then 
\begin{equation}\label{eq4.9}
V(\alpha )=(-A^3)^{-\text{wr}(\alpha )}\bigl\langle D\bigl( T(\alpha )\bigr) \bigr\rangle ,  
\end{equation}
where the bracket $\langle \ \ \rangle $ means the Kauffman bracket polynomial \cite{Kau-Topology}, which is a Laurent polynomial in variable $A$ with integer coefficient and is defined by the following axioms. 
\begin{enumerate}\itemindent=1cm 
\item[(KB1)] $\langle \ \raisebox{-0.15cm}{\includegraphics[width=0.5cm]{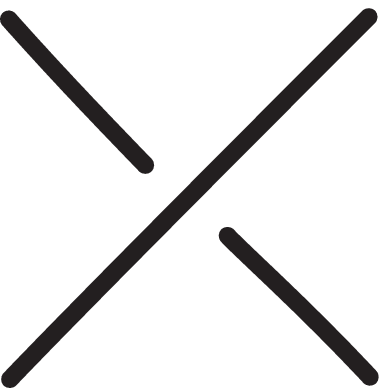}}\ \rangle 
=A\langle \ \raisebox{-0.15cm}{\includegraphics[width=0.5cm]{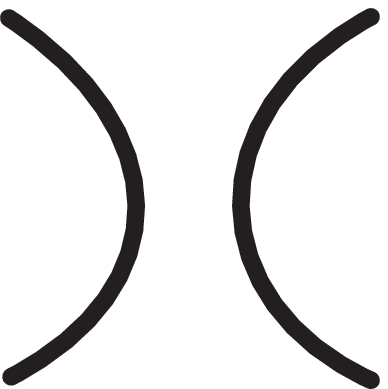}} \ \rangle 
+A^{-1}\langle \ \raisebox{-0.15cm}{\includegraphics[width=0.5cm]{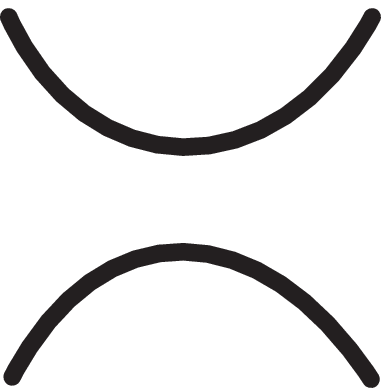}} \ \rangle $ 
\item[(KB2)] $\langle \ D\coprod \raisebox{-0.1cm}{\includegraphics[width=0.4cm]{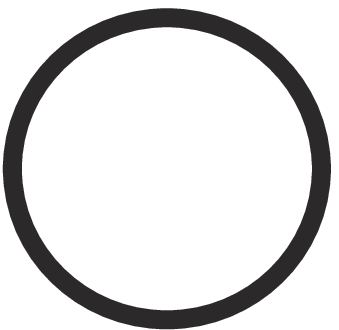}}\ \rangle =\delta \langle  D\rangle $,  where $\delta =-A^2-A^{-2}$.
\item[(KB3)] $\langle \ \raisebox{-0.1cm}{\includegraphics[width=0.4cm]{unoriented_circle.eps}}\ \rangle =1$. 
\item[(KB4)] $\langle D\rangle $ is a regular isotopy invariant of $D$, that is, it is invariant under Reidemeister moves II and III.
\end{enumerate}

\par 
The Kauffman bracket polynomials for Conway-Coxeter friezes of zigzag-type are introduced in \cite{Kogiso-Wakui}. 
To describe it, we use the weight $\text{wt}(\alpha )$ for a positive rational number $\alpha $ which is defined as follows. 
Express $\alpha $ as 
$\alpha =[a_0, a_1, \ldots , a_n]$ such that $a_0$ is a non-negative integer and $a_1, \ldots , a_n$ are positive integers, and set  
\begin{equation}
\text{wt}(\alpha )=\begin{cases}
a_0-a_1+a_2-\cdots +a_n & \text{if $n$ is even}, \\ 
a_0-a_1+a_2-\cdots -a_n+2 & \text{if $n$ is odd}. 
\end{cases}
\end{equation}
\noindent 
It can be easily shown that $\text{wt}(\alpha )$ is well-defined. 
\par 
For the Conway-Coxeter frieze $\varGamma _{\alpha}$ corresponding to a rational number $\alpha $ in $(0,1)$, the Kauffman bracket polynomial $\langle \varGamma _{\alpha }\rangle $ is given by the formula (see \cite[Theorem 2.10, Equation (2.7)]{Kogiso-Wakui}): 
\begin{equation}\label{eq4.11}
\langle \varGamma _{\alpha }\rangle =(-A^3)^{\text{wt}(\alpha )}\bigl\langle D( T( \alpha )) \bigr\rangle .
\end{equation}

Let us introduce the extended weight $\widetilde{\text{wt}}(\alpha )$ by 
\begin{equation}\label{eq4.12}
\widetilde{\text{wt}}(\alpha )=-\text{wr}(\alpha )-\text{wt}(\alpha ) . 
\end{equation}

Combining \eqref{eq4.11} and \eqref{eq4.9} we have: 

\par \bigskip 
\begin{prop}\label{4-3}
For a rational number $\alpha $ in $(0,1)$, 
the Jones polynomial $V(\alpha )$ of the rational link $D(T(\alpha ))$ is given by 
\begin{equation}\label{eq4.13}
V(\alpha )=(-A^3)^{\widetilde{\mathrm{wt}}(\alpha )}\langle \varGamma_{\alpha }\rangle . 
\end{equation}
\end{prop}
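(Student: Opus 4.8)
The plan is to derive \eqref{eq4.13} by straightforward substitution, treating Proposition~\ref{4-3} as a purely formal consequence of the two displayed formulas \eqref{eq4.11} and \eqref{eq4.9} together with the definition \eqref{eq4.12} of the extended weight. Since both formulas relate the Kauffman bracket of the abstract frieze $\langle \varGamma_\alpha\rangle$ and the oriented Jones polynomial $V(\alpha)$ to the same underlying object $\langle D(T(\alpha))\rangle$, the bracket of the unoriented link diagram, the strategy is to eliminate $\langle D(T(\alpha))\rangle$ between them.

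First I would solve \eqref{eq4.11} for the link bracket, writing
\begin{equation*}
\bigl\langle D(T(\alpha))\bigr\rangle = (-A^3)^{-\mathrm{wt}(\alpha)}\,\langle \varGamma_\alpha\rangle ,
\end{equation*}
which is legitimate because $(-A^3)^{\mathrm{wt}(\alpha)}$ is an invertible unit in the ring of Laurent polynomials $\mathbb{Z}[A,A^{-1}]$. Then I would substitute this expression into the defining equation \eqref{eq4.9} for the Jones polynomial, obtaining
\begin{equation*}
V(\alpha) = (-A^3)^{-\mathrm{wr}(\alpha)}\,(-A^3)^{-\mathrm{wt}(\alpha)}\,\langle \varGamma_\alpha\rangle
= (-A^3)^{-\mathrm{wr}(\alpha)-\mathrm{wt}(\alpha)}\,\langle \varGamma_\alpha\rangle .
\end{equation*}
Finally, recognizing the exponent $-\mathrm{wr}(\alpha)-\mathrm{wt}(\alpha)$ as exactly the extended weight $\widetilde{\mathrm{wt}}(\alpha)$ by definition \eqref{eq4.12}, I would rewrite this as $V(\alpha)=(-A^3)^{\widetilde{\mathrm{wt}}(\alpha)}\langle\varGamma_\alpha\rangle$, which is \eqref{eq4.13}.

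There is essentially no obstacle in this argument; it is a one-line computation once the three ingredients are in place. The only point requiring minor care is that the exponents $\mathrm{wt}(\alpha)$ and $\mathrm{wr}(\alpha)$ are integers, so that $(-A^3)$ raised to them combines correctly via the usual law of exponents $(-A^3)^a(-A^3)^b=(-A^3)^{a+b}$; this is immediate since $\mathrm{wt}(\alpha)\in\mathbb{Z}$ and $\mathrm{wr}(\alpha)\in\mathbb{Z}$ are genuine integer quantities and $-A^3$ is a unit. The substantive content of the proposition therefore lies entirely in the earlier formulas \eqref{eq4.11} and \eqref{eq4.9}, which are cited from \cite{Kogiso-Wakui} and from the standard normalization of the Jones polynomial via the Kauffman bracket; the present statement merely repackages them through the bookkeeping device $\widetilde{\mathrm{wt}}$. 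I expect the proof to consist of displaying precisely this chain of substitutions.
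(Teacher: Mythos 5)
Your proposal is correct and coincides with the paper's argument: the paper simply states that the proposition follows by combining \eqref{eq4.11} and \eqref{eq4.9}, which is exactly the elimination of $\langle D(T(\alpha))\rangle$ and the identification of the exponent with $\widetilde{\mathrm{wt}}(\alpha)$ via \eqref{eq4.12} that you carry out. Nothing is missing.
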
 

\begin{rem}\label{4-4}
Nagai and Terashima \cite[Theorem 4.4]{Nagai_Terashima} found a combinatorial formula for the writhe $\mathrm{wr}(\alpha )$. 
If we write in the form $\alpha =[0, a_1, \ldots , a_n]$ by some $a_1, \ldots , a_n\in \mathbb{N}$, then the writhe is given by 
\begin{equation}\label{eq4.14}
-\mathrm{wr}(\alpha ) =\sum\limits_{j=1}^nt_{\alpha }(\Delta _j)a_j.
\end{equation}
Here, $\{ \Delta _1,\ldots , \Delta _n\}$ is the sequence of triangles determined by the continued fraction expansion $\alpha =[0, a_1, \ldots , a_n]$, and $t_{\alpha }(\Delta _j)$ is a sign of $\Delta _j$ determined as follows. 
\begin{align*}
t_{\alpha }(\Delta _1) &=\begin{cases}
1 & \text{if $\alpha $ is $\frac{1}{0}$ or $\frac{0}{1}$-type}, \\ 
-1 & \text{if $\alpha $ is $\frac{1}{1}$-type},
\end{cases},\\ 
t_{\alpha }(\Delta _j) &=\begin{cases}
-t_{\alpha }(\Delta _{j-1})  & \text{if the Seifert path of $\alpha $ goes through between $\Delta _{j-1}$ and $\Delta _j$}, \\ 
t_{\alpha }(\Delta _{j-1})  & \text{otherwise}. 
\end{cases}
\end{align*}

More direct recursive formula is given in the last section. 
Thus, the extended weight $\widetilde{\text{wt}}(\alpha )$ is computable in a purely combinatorial way. 
\end{rem} 

If a rational number $\alpha$ is $\frac{1}{0}$-type, then we define $\widetilde{\mathrm{wt}}_{+-}(\alpha )$ by 
\begin{equation}
\widetilde{\mathrm{wt}}_{+-}(\alpha ):=-\text{wr}_{+-}(\alpha )-\text{wt}(\alpha ) . 
\end{equation}

\par \bigskip \noindent 
\begin{lem}\label{4-5}
Let $p, q$ be coprime integers satisfying with $0<p<q$, and $( \frac{x}{s}, \frac{y}{r})$ be the pair of parents of 
$\alpha =\frac{p}{q}$. 
\begin{enumerate}
\item[$(1)$] If $q$ is odd, then 
$$\widetilde{\mathrm{wt}}\bigl( i(\alpha )\bigr) 
=\widetilde{\mathrm{wt}}\bigl( (ir)(\alpha )\bigr) 
=-\widetilde{\mathrm{wt}}(\alpha ),\quad 
\widetilde{\mathrm{wt}}\bigl( r(\alpha )\bigr) 
=\widetilde{\mathrm{wt}}(\alpha ).$$
\item[$(2)$] If $q$ and $x$ are even, then 
$$\widetilde{\mathrm{wt}}\bigl( i(\alpha )\bigr) 
=\widetilde{\mathrm{wt}}\bigl( (ir)(\alpha )\bigr) 
=-\widetilde{\mathrm{wt}}_{+-}(\alpha ),\quad 
\widetilde{\mathrm{wt}}\bigl( r(\alpha )\bigr) 
=\widetilde{\mathrm{wt}}(\alpha ).$$
\item[$(3)$] If $q$ and $y$ are even, then
$$\widetilde{\mathrm{wt}}\bigl( i(\alpha )\bigr) 
=-\widetilde{\mathrm{wt}}_{+-}(\alpha ),\quad 
\widetilde{\mathrm{wt}}\bigl( (ir)(\alpha )\bigr) 
=-\widetilde{\mathrm{wt}}(\alpha ),\qquad 
\widetilde{\mathrm{wt}}\bigl( r(\alpha )\bigr) 
=\widetilde{\mathrm{wt}}_{+-}(\alpha ).$$
\end{enumerate}
\end{lem}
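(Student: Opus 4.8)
The plan is to split each claimed identity into a ``weight part'' and a ``writhe part'', since by definition $\widetilde{\mathrm{wt}}(\beta)=-\mathrm{wr}(\beta)-\mathrm{wt}(\beta)$ and $\widetilde{\mathrm{wt}}_{+-}(\beta)=-\mathrm{wr}_{+-}(\beta)-\mathrm{wt}(\beta)$. Throughout I write $\alpha=[0,a_1,\ldots,a_n]$ with $n$ even, so that Lemmas~\ref{1-4} and~\ref{4-1} apply. The first, type-independent half is the behaviour of $\mathrm{wt}$. Using the continued-fraction descriptions $i(\alpha)=[0,1,a_1-1,a_2,\ldots,a_n]$ and $(ir)(\alpha)=[0,a_n,\ldots,a_2,a_1]$ from Lemma~\ref{1-4} and the alternating-sum definition of $\mathrm{wt}$ (invoking its independence of the parity of the expansion length to absorb the reduction when $a_1=1$), a direct computation gives
\[
\mathrm{wt}(i(\alpha))=\mathrm{wt}((ir)(\alpha))=-\mathrm{wt}(\alpha),\qquad \mathrm{wt}(r(\alpha))=\mathrm{wt}(\alpha),
\]
for every $\alpha\in(0,1)$, the last equality following from $r=i\circ(ir)$.

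Next I would reduce the case hypotheses to the parity of $N_0:=N_0(a_1,\ldots,a_n)$, which is the form used in Lemma~\ref{4-1}. When $q$ is even, $\alpha$ is of $\frac{1}{0}$-type and so are $i(\alpha),r(\alpha),(ir)(\alpha)$ (all four numerators are odd), so each diagram is a two-component link; Lemma~\ref{1-5}(1) then identifies case~(2), ``$x$ even'', with ``$N_0$ even'', and case~(3), ``$y$ even'', with ``$N_0$ odd''. With this dictionary I compute the writhe parts. In case~(1), where $q$ is odd and all four links are knots, the two isotopies in the proof of Lemma~\ref{4-2} each contribute a single mirror reflection, giving $\mathrm{wr}(i(\alpha))=\mathrm{wr}((ir)(\alpha))=-\mathrm{wr}(\alpha)$, and substituting into $\widetilde{\mathrm{wt}}=-\mathrm{wr}-\mathrm{wt}$ with the weight part yields $\widetilde{\mathrm{wt}}(i(\alpha))=\widetilde{\mathrm{wt}}((ir)(\alpha))=-\widetilde{\mathrm{wt}}(\alpha)$. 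In cases~(2) and~(3), applying Lemma~\ref{4-1}(3) to $i(\alpha)$ (and using $i\circ i=\mathrm{id}$) gives $\mathrm{wr}(i(\alpha))=-\mathrm{wr}_{+-}(\alpha)$ in both, hence $\widetilde{\mathrm{wt}}(i(\alpha))=-\widetilde{\mathrm{wt}}_{+-}(\alpha)$; meanwhile Lemma~\ref{4-1}(1),(2) give $\mathrm{wr}((ir)(\alpha))=-\mathrm{wr}_{+-}(\alpha)$ when $N_0$ is even (case~(2), so $\widetilde{\mathrm{wt}}((ir)(\alpha))=-\widetilde{\mathrm{wt}}_{+-}(\alpha)$) and $\mathrm{wr}((ir)(\alpha))=-\mathrm{wr}(\alpha)$ when $N_0$ is odd (case~(3), so $\widetilde{\mathrm{wt}}((ir)(\alpha))=-\widetilde{\mathrm{wt}}(\alpha)$).

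For the $r$-identity I would exploit $r=i\circ(ir)$ together with the facts that $(ir)(\alpha)=[0,a_n,\ldots,a_1]$ has the same multiset of partial quotients as $\alpha$ (so $N_0((ir)(\alpha))$ has the same parity as $N_0$) and that $(ir)\circ(ir)=\mathrm{id}$. Applying the already-established $i$-relation to $(ir)(\alpha)$ gives $\widetilde{\mathrm{wt}}(r(\alpha))=-\widetilde{\mathrm{wt}}_{+-}((ir)(\alpha))$ in the even-$q$ cases; then Lemma~\ref{4-1}(1) applied to $(ir)(\alpha)$ evaluates $\widetilde{\mathrm{wt}}_{+-}((ir)(\alpha))$ as $-\widetilde{\mathrm{wt}}(\alpha)$ in case~(2) and as $-\widetilde{\mathrm{wt}}_{+-}(\alpha)$ in case~(3), so $\widetilde{\mathrm{wt}}(r(\alpha))$ equals $\widetilde{\mathrm{wt}}(\alpha)$ and $\widetilde{\mathrm{wt}}_{+-}(\alpha)$ respectively. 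For $q$ odd the same composition collapses directly to $\widetilde{\mathrm{wt}}(r(\alpha))=\widetilde{\mathrm{wt}}(\alpha)$.

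I expect the main obstacle to be the orientation bookkeeping in the two-component case: one must keep straight the four oriented writhes $\mathrm{wr},\mathrm{wr}_{+-},\mathrm{wr}_{--},\mathrm{wr}_{-+}$ and ensure that the branch of Lemma~\ref{4-1} invoked for $\alpha$, for $i(\alpha)$, and for $(ir)(\alpha)$ is the one selected by the correct $N_0$-parity, correctly matched through Lemma~\ref{1-5} to the ``$x$ even'' versus ``$y$ even'' dichotomy of the statement. The weight part and the reduction to $N_0$-parity are routine; it is this alignment of the isotopy-driven writhe relations that carries the content.
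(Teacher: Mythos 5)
Your proposal is correct and follows essentially the same route as the paper: split $\widetilde{\mathrm{wt}}$ into the writhe and weight contributions, compute the weight part directly from the continued--fraction forms of Lemma~\ref{1-4}, translate the ``$x$ even''/``$y$ even'' hypotheses into the parity of $N_0(a_1,\ldots,a_n)$ via Lemma~\ref{1-5}, and then feed the writhe relations of Lemmas~\ref{4-1} and~\ref{4-2} (together with $r=i\circ(ir)$ for the $r$-identities) into the definition of $\widetilde{\mathrm{wt}}$. The only cosmetic difference is that you derive $\mathrm{wt}(r(\alpha))=\mathrm{wt}(\alpha)$ by composing the $i$- and $(ir)$-identities while the paper recomputes it from $r(\alpha)=[0,1,a_n-1,a_{n-1},\ldots,a_1]$, and you write out case~(3) which the paper leaves to the reader.
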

\begin{proof} 
Let $\alpha $ express as a continued fraction $\alpha =[0, a_1, \ldots , a_n]$, where $n$ is even.  
\par 
(1) If $q$ is odd, then by Lemma~\ref{4-2}, $\mathrm{wr}\bigl( i(\alpha )\bigr)=-\mathrm{wr}(\alpha )$. 
Since $n$ is even, $i(\alpha )=[0, 1, a_1-1, a_2, \ldots , a_n]$ and 
$$\mathrm{wt}\bigl( i(\alpha )\bigr) 
=-1+(a_1-1)+\sum\limits_{k=2}^n (-1)^k a_k
=-\sum\limits_{k=1}^n (-1)^{k-1}a_k
=-\mathrm{wt}(\alpha ).$$
Thus, we have
$$\widetilde{\mathrm{wt}}\bigl( i(\alpha )\bigr) 
=-\mathrm{wr}\bigl( i(\alpha )\bigr) -\mathrm{wt}\bigl( i(\alpha )\bigr) 
=\mathrm{wr}(\alpha )+\mathrm{wt}(\alpha ) 
=-\widetilde{\mathrm{wt}}(\alpha ).$$
By the same manner, 
since $\mathrm{wr}\bigl( (ir)(\alpha )\bigr)=-\mathrm{wr}(\alpha )$ and 
$\mathrm{wt}\bigl( (ir)(\alpha )\bigr) =-\mathrm{wt}(\alpha )$, it follows that  
$\widetilde{\mathrm{wt}}\bigl( (ir)(\alpha )\bigr) =-\widetilde{\mathrm{wt}}(\alpha )$. 
\par 
If we set $\beta =(ir)(\alpha )$, then $r(\alpha )=i(\beta )$, and hence 
$$\mathrm{wr}\bigl( r(\alpha )\bigr) 
=\mathrm{wr}\bigl( i(\beta )\bigr) 
=-\mathrm{wr}( \beta )
=-\mathrm{wr}\bigl( (ir)(\alpha ) \bigr) 
=\mathrm{wr}( \alpha ).$$ 
Since $n$ is even, $r(\alpha )=[0, 1, a_n-1, a_{n-1}, \ldots , a_1]$ and 
$$\mathrm{wt}\bigl( r(\alpha )\bigr) 
=-1+(a_n-1)+\sum\limits_{k=2}^n (-1)^{k-1}a_{n-k+1}+2
=a_n-\sum\limits_{k=1}^{n-2} (-1)^{n-k+1}a_k
=\sum\limits_{k=1}^n (-1)^k a_k
=\mathrm{wt}(\alpha ).$$
Thus, we have $\widetilde{\mathrm{wt}}\bigl( r(\alpha )\bigr) =\widetilde{\mathrm{wt}}( \alpha )$. 
\par 
(2) Since $q$ is even, 
$\mathrm{wr}\bigl( i(\alpha )\bigr) =-\mathrm{wr}_{+-}( \alpha )$ by Lemma~\ref{4-1}(3). 
Since $n$ is even, as the proof of Part (1) one can show that 
$\mathrm{wt}\bigl( i(\alpha )\bigr) =-\mathrm{wt}(\alpha )$. 
Thus, 
$\widetilde{\mathrm{wt}}\bigl( i(\alpha )\bigr) =-\widetilde{\mathrm{wt}}_{+-}( \alpha )$. 
\par 
Since $n$ is even, as the proof of Part (1) one can show that 
$\mathrm{wt}\bigl( (ir)(\alpha )\bigr) 
=-\mathrm{wt}(\alpha )$. 
\par 
If $x$ is even, then $N_0(a_1, \ldots , a_n)$ is also even by Lemma~\ref{1-5}.  
Thus, $\mathrm{wr}\bigl( (ir)(\alpha )\bigr) =-\mathrm{wr}_{+-}(\alpha )$ by Lemma~\ref{4-1}(1), and 
therefore, 
$$\widetilde{\mathrm{wt}}\bigl( (ir)(\alpha )\bigr)
 =\mathrm{wr}_{+-}(\alpha )+\mathrm{wt}(\alpha )=-\widetilde{\mathrm{wt}}_{+-}( \alpha ).$$
\par 
If we set $\beta =(ir)(\alpha )$, then 
$r(\alpha )=i(\beta )$, and hence
$$\mathrm{wr}\bigl( r(\alpha )\bigr) =\mathrm{wr}\bigl( i(\beta )\bigr) =-\mathrm{wr}_{+-}( \beta )$$ 
by Lemma~\ref{4-1}(3). 
Since $\beta =\frac{s}{q}=[0, a_n, \ldots , a_2, a_1]$ is of type $\frac{1}{0}$ and 
$N_0(a_n, \ldots , a_1)$ is even, it follows that 
$$\mathrm{wr}( \beta )=-\mathrm{wr}_{+-}\bigl( (ir)(\beta )\bigr)=-\mathrm{wr}_{+-}( \alpha )$$
by Lemma~\ref{4-1}(1). 
Thus, $\mathrm{wr}\bigl( r(\alpha )\bigr) =\mathrm{wr}( \alpha )$. 
Since $n$ is even, 
$$\mathrm{wt}\bigl( r(\alpha )\bigr) 
=-1+(a_n-1)+\sum\limits_{k=2}^n (-1)^{k-1}a_{n-k+1}+2
=a_n-\sum\limits_{k=1}^{n-2} (-1)^{n-k+1}a_k
=\sum\limits_{k=1}^n (-1)^k a_k
=\mathrm{wt}(\alpha ),$$
and hence 
$$\widetilde{\mathrm{wt}}\bigl( r(\alpha )\bigr) 
=-\mathrm{wr}(\alpha )-\mathrm{wt}(\alpha )=\widetilde{\mathrm{wt}}( \alpha ). $$
\par 
Part (3) can be shown by the same manner in the proof of Parts (1) and (2). 
\end{proof} 

\par \bigskip \noindent 
\begin{thm}\label{4-6}
Let $p, q$ be coprime integers satisfying with $0<p<q$, and $( \frac{x}{s}, \frac{y}{r})$ be the pair of parents of 
$\alpha =\frac{p}{q}$. 
Then, 
\begin{equation}\label{eq4.16}
\bigl( \langle \varGamma_{\alpha }\rangle ,\ 
\langle \varGamma_{i(\alpha )}\rangle ,\ 
\langle \varGamma_{r(\alpha )}\rangle ,\ 
\langle \varGamma_{(ir)(\alpha )}\rangle \bigr) =
\bigl( \langle \varGamma_{\alpha }\rangle ,\ 
\overline{\langle \varGamma_{\alpha }\rangle },\ 
\langle \varGamma_{\alpha }\rangle ,\ 
\overline{\langle \varGamma_{\alpha }\rangle }\bigr)
\end{equation}
holds, 
where  $\overline{\langle \varGamma_{\alpha }\rangle }$ denotes the Laurent polynomial obtained from $\langle \varGamma_{\alpha }\rangle $ by replacing $A$ with $A^{-1}$. 
Furthermore, 
\begin{enumerate}
\item[$(1)$] If $q$ is odd, then 
\begin{align*}
V\bigl( i(\alpha )\bigr) &=V\bigl( (ir)(\alpha )\bigr) =\overline{V(\alpha )},\\ 
V\bigl( r(\alpha )\bigr) &=V(\alpha ).
\end{align*}
\item[$(2)$] If $q$ and $x$ are even, then 
\begin{align*}
V\bigl( i(\alpha )\bigr) &=V\bigl( (ir)(\alpha )\bigr) 
=(-A^3)^{-\mathrm{wr}(\alpha )-\mathrm{wr}(i(\alpha ))}\overline{V(\alpha )},\\ 
V\bigl( r(\alpha )\bigr) &=V(\alpha ).
\end{align*}
\item[$(3)$] If $q$ and $y$ are even, then 
\begin{align*}
V\bigl( i(\alpha )\bigr) &=(-A^3)^{-\mathrm{wr}(\alpha )-\mathrm{wr}(i(\alpha ))}\overline{V(\alpha )},\\ 
V\bigl( (ir)(\alpha )\bigr) &=\overline{V(\alpha )},\\ 
V\bigl( r(\alpha )\bigr) &=(-A^3)^{\mathrm{wr}(\alpha )+\mathrm{wr}(i(\alpha ))}V(\alpha ).
\end{align*}
\end{enumerate}
\end{thm}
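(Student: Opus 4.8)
The plan is to reduce everything to the defining formula $V(\alpha)=(-A^3)^{\widetilde{\mathrm{wt}}(\alpha)}\langle \varGamma_{\alpha}\rangle$ of Proposition~\ref{4-3}, so that the three cases become bookkeeping of exponents of $(-A^3)$ once the bracket identity \eqref{eq4.16} is in hand.

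First I would establish \eqref{eq4.16}. Combining \eqref{eq4.11} with the regular isotopies collected in Remark~\ref{3-4}, namely $D(T(r(\alpha)))\sim D(T(\alpha))$ and $D(T(i(\alpha)))\sim D(T((ir)(\alpha)))=D(\overline{T(\alpha)})$, together with the fact that passing to the mirror image replaces $A$ by $A^{-1}$ in the Kauffman bracket, gives $\langle D(T(r(\alpha)))\rangle=\langle D(T(\alpha))\rangle$ and $\langle D(T(i(\alpha)))\rangle=\langle D(T((ir)(\alpha)))\rangle=\overline{\langle D(T(\alpha))\rangle}$. It then remains to absorb the prefactors $(-A^3)^{\mathrm{wt}}$: using the weight identities $\mathrm{wt}(r(\alpha))=\mathrm{wt}(\alpha)$ and $\mathrm{wt}(i(\alpha))=\mathrm{wt}((ir)(\alpha))=-\mathrm{wt}(\alpha)$ established inside the proof of Lemma~\ref{4-5}, together with the elementary observation $\overline{(-A^3)^k}=(-A^3)^{-k}$, one reads off \eqref{eq4.16} directly.

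Next I would turn to the Jones polynomials. Writing $\overline{V(\alpha)}=(-A^3)^{-\widetilde{\mathrm{wt}}(\alpha)}\,\overline{\langle \varGamma_{\alpha}\rangle}$ via Proposition~\ref{4-3} and the bar relation just noted, each claimed identity becomes a comparison of $(-A^3)$-exponents after substituting \eqref{eq4.16} into $V(\beta)=(-A^3)^{\widetilde{\mathrm{wt}}(\beta)}\langle \varGamma_{\beta}\rangle$ for $\beta\in\{i(\alpha),r(\alpha),(ir)(\alpha)\}$. The extended-weight relations of Lemma~\ref{4-5} supply the needed exponents: in case $(1)$ ($q$ odd) the relations $\widetilde{\mathrm{wt}}(i(\alpha))=\widetilde{\mathrm{wt}}((ir)(\alpha))=-\widetilde{\mathrm{wt}}(\alpha)$ and $\widetilde{\mathrm{wt}}(r(\alpha))=\widetilde{\mathrm{wt}}(\alpha)$ give the stated formulas at once, while in cases $(2)$ and $(3)$ (here $q$ is even, so both $\alpha$ and $i(\alpha)$ are of type $\frac{1}{0}$) the relations involving $\widetilde{\mathrm{wt}}_{+-}(\alpha)$ must be rewritten in terms of the writhes $\mathrm{wr}(\alpha)$ and $\mathrm{wr}(i(\alpha))$.

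The step where care is required is exactly this rewriting. Unwinding the definitions $\widetilde{\mathrm{wt}}=-\mathrm{wr}-\mathrm{wt}$ and $\widetilde{\mathrm{wt}}_{+-}=-\mathrm{wr}_{+-}-\mathrm{wt}$, the target exponents $\mp(\mathrm{wr}(\alpha)+\mathrm{wr}(i(\alpha)))$ appearing in $(2)$ and $(3)$ match the exponents produced by Lemma~\ref{4-5} precisely when $\mathrm{wr}_{+-}(\alpha)=-\mathrm{wr}(i(\alpha))$. I would obtain this identity by applying Lemma~\ref{4-1}(3) not to $\alpha$ but to $i(\alpha)$, which is legitimate since $i(\alpha)$ is again of type $\frac{1}{0}$; this yields $\mathrm{wr}(i(\alpha))=-\mathrm{wr}_{+-}(i(i(\alpha)))$, and since $i$ is an involution we have $i(i(\alpha))=\alpha$, giving the claim. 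This involution trick, rather than any further geometry, is the only nonroutine point; everything else is exponent bookkeeping, carried out with constant vigilance that $\overline{(-A^3)^k}=(-A^3)^{-k}$ and that the bar on $\langle\varGamma_\alpha\rangle$ is tracked consistently.
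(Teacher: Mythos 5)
Your proposal is correct and follows essentially the same route as the paper, which simply cites \eqref{eq4.16} from the earlier work of the authors and then derives the Jones polynomial identities from \eqref{eq4.13} together with Lemmas~\ref{4-1} and \ref{4-5}; you merely fill in the exponent bookkeeping the paper leaves implicit. The one detail worth noting is that your identity $\mathrm{wr}_{+-}(\alpha)=-\mathrm{wr}(i(\alpha))$, obtained by applying Lemma~\ref{4-1}(3) to $i(\alpha)$ and using that $i$ is an involution preserving the $\tfrac{1}{0}$-type, is exactly the unstated step needed to convert the $\widetilde{\mathrm{wt}}_{+-}$ terms of Lemma~\ref{4-5} into the writhe exponents $\pm(\mathrm{wr}(\alpha)+\mathrm{wr}(i(\alpha)))$ appearing in cases $(2)$ and $(3)$, and your use of it is legitimate.
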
 
\begin{proof} 
The equation \eqref{eq4.16} has already shown in \cite[Theorems 3.12 and 3.15]{Kogiso-Wakui}. 
The rest of all equations can be easily obtained from \eqref{eq4.13}, Lemmas~\ref{4-1} and \ref{4-5}. 
\end{proof} 

\begin{rem}\label{4-7}
In the case where $q$ is even, we may consider the Jones polynomial $V_{+-}(\alpha )$ of the rational link $D_{+-}(T(\alpha ))$. 
By Lemma~\ref{4-1}(3), 
$D_{+-}(T(\alpha ))$ is isotopic to $D_{--}\bigl( \overline{T\bigl( i(\alpha )\bigr)}\bigr)$ as an oriented link. 
This implies that 
\begin{equation}
V_{+-}(\alpha )=\overline{V(i(\alpha ))}.
\end{equation}
\end{rem}

\begin{cor}\label{4-8}
For a Conway-Coxeter frieze $\varGamma$ of zigzag-type, 
we choose a rational number $\alpha $ in the open interval $(0,1)$ such that $\varGamma =\varGamma _{\alpha}$, and $( \frac{x}{s}, \frac{y}{r})$ be the pair of parents of $\alpha =\frac{p}{q}$. 
Define the equivalence class $V(\varGamma )$ by
\begin{equation}
V(\varGamma ):=\begin{cases}
V(\alpha )\equiv \overline{V(\alpha )} &  \text{if $q$ is odd}, \\ 
 V(\alpha )\equiv V(i(\alpha )) & 
 \text{if $q$ and $x$ are even}, \\ 
V(\alpha )\equiv V(i(\alpha ))\equiv \overline{V(i(\alpha ))}\equiv \overline{V(\alpha )}  & \text{if $q$ and $y$ are even}. 
\end{cases}
\end{equation}
Then $V(\varGamma )$ is well-defined. 
We treat $V(\varGamma )$ as a Laurent polynomial in variable $t^{\frac{1}{2}}$ by substituting $t=A^{-4}$, 
and call it the Jones polynomial of the Conway-Coxeter frieze $\varGamma $. 
\end{cor}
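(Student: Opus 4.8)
The plan is to reduce the statement to representative-independence. By the analysis of Section~2 the fibre of the surjection $\alpha\mapsto\varGamma_\alpha$ over a fixed $\varGamma$ is precisely the orbit $O:=\{\alpha,i(\alpha),r(\alpha),(ir)(\alpha)\}$ of the Klein four-group $\langle i,r\rangle\cong(\mathbb{Z}/2)^2$ acting on $\mathbb{Q}\cap(0,1)$; thus choosing $\alpha$ with $\varGamma=\varGamma_\alpha$ amounts to choosing a representative of $O$, and ``well-defined'' means that the class produced by the formula is independent of that choice. Since each of $i,r,ir$ fixes the denominator (Lemma~\ref{1-2}), $q$ is constant on $O$, so the top-level alternative ``$q$ odd'' versus ``$q$ even'' is already an invariant of $\varGamma$; moreover, when $q$ is even the numerator $p=x+y$ is odd, so exactly one of $x,y$ is even and the two secondary cases are exhaustive and mutually exclusive. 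Only this secondary split depends a priori on the chosen representative.

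The key step is to show that, in each of the three cases, the defining class coincides with the single set $S:=\{\,V(\beta):\beta\in O\,\}$ of Jones polynomials of the four oriented rational links attached to the orbit. This is exactly what Theorem~\ref{4-6} delivers, case by case. When $q$ is odd, part~(1) gives $V(i(\alpha))=V((ir)(\alpha))=\overline{V(\alpha)}$ and $V(r(\alpha))=V(\alpha)$, whence $\{V(\alpha),\overline{V(\alpha)}\}=S$. When $q$ and $x$ are even, part~(2) gives $V(r(\alpha))=V(\alpha)$ and $V((ir)(\alpha))=V(i(\alpha))$, whence $\{V(\alpha),V(i(\alpha))\}=S$. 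When $q$ and $y$ are even, part~(3) gives $V((ir)(\alpha))=\overline{V(\alpha)}$ together with $\overline{V(i(\alpha))}=(-A^3)^{\mathrm{wr}(\alpha)+\mathrm{wr}(i(\alpha))}V(\alpha)=V(r(\alpha))$, whence the four listed polynomials are exactly $\{V(\alpha),V(i(\alpha)),V(r(\alpha)),V((ir)(\alpha))\}=S$. Because the definition-case and the corresponding part of Theorem~\ref{4-6} are indexed in the same way by the case of the chosen representative, applying the formula to any $\beta\in O$ (in whichever case $\beta$ falls) returns $\{V(\gamma):\gamma\in O\}=S$. As $S$ depends only on the orbit $O$, hence only on $\varGamma$, well-definedness follows; passing to $t^{1/2}$ via $t=A^{-4}$ is harmless, the bar operation $A\mapsto A^{-1}$ becoming $t\mapsto t^{-1}$.

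I expect the real bookkeeping to sit in the $y$-even case, where one must check that the powers of $-A^3$ in Theorem~\ref{4-6}(3) cancel exactly, so that $\overline{V(i(\alpha))}$ and $\overline{V(\alpha)}$ are genuinely $V(r(\alpha))$ and $V((ir)(\alpha))$ and the four-element class collapses onto $S$ rather than producing extra polynomials; this is the step where the framing correction $\mathrm{wr}(\alpha)+\mathrm{wr}(i(\alpha))$ matters. The argument above makes the secondary split automatically constant on $O$ (a two-element and a four-element value of $S$ cannot both occur), but if one prefers a direct verification it follows from the continued-fraction formulas of Lemma~\ref{1-4} together with Lemma~\ref{1-5}: the parity of $N_0(a_1,\dots,a_n)$, and hence for $q$ even whether $x$ or $y$ is even, is unchanged under $i,r,ir$. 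The only care needed there is to re-expand each transformed continued fraction to even length before invoking Lemma~\ref{1-5}, the compensating changes always entering in pairs at the two ends of the expansion.
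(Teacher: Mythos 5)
Your argument is correct and is essentially the paper's own: Corollary~\ref{4-8} is stated there without a separate proof, as an immediate consequence of Theorem~\ref{4-6}, and your case-by-case check that each defining class coincides with the orbit invariant $S=\{\,V(\beta):\beta\in\{\alpha ,i(\alpha ),r(\alpha ),(ir)(\alpha )\}\,\}$ (including the cancellation of the $(-A^3)^{\mathrm{wr}(\alpha )+\mathrm{wr}(i(\alpha ))}$ factors in the $y$-even case, which shows $\overline{V(i(\alpha ))}=V(r(\alpha ))$ and $\overline{V(\alpha )}=V((ir)(\alpha ))$) is exactly that intended reasoning made explicit. Nothing further is needed.
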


\par \medskip \noindent 
\begin{exam}
\begin{enumerate}
\item[$(1)$] When $\alpha =\frac{1}{4}$, by Lemma~\ref{1-2},  
$i\bigl( \frac{1}{4}\bigr) =\frac{3}{4},\ r\bigl( \frac{1}{4}\bigr) =\frac{1}{4},\ (ir)\bigl( \frac{1}{4}\bigr) =\frac{3}{4}$. 
Since the pair of parents of $\alpha$ is $(\frac{0}{1}, \frac{1}{3})$, by Theorem~\ref{4-6}(2), we see that 
$$\Bigl( V(\alpha ),\  V\bigl(i(\alpha )\bigr) ,\ V\bigl(r(\alpha )\bigr) ,\ V\bigl((ir)(\alpha )\bigr) \Bigr) 
=\Bigl( V(\alpha ),\ \overline{V_{+-}(\alpha ) },\ V(\alpha ) ,\ \overline{V_{+-}(\alpha )} \Bigr) .$$ 
Since $\alpha =[0, 4]$ and $i(\alpha )=[0,1,3]$, $\mathrm{wr}(\alpha )=\mathrm{wr}(i(\alpha ))=4$. 
So, $\mathrm{wr}(\alpha )+\mathrm{wr}(i(\alpha ))=8$ and 
$V(\frac{1}{4})=t^{\frac{3}{2}}(-t^3-t+1-t^{-1})$. 
Thus 
$$V(\varGamma _{\frac{1}{4}})\equiv t^{\frac{3}{2}}(-t^3-t+1-t^{-1})\equiv t^{\frac{9}{2}}(-t^{-3}-t^{-1}+1-t). $$
Indeed, $V(\frac{3}{4})=t^{\frac{9}{2}}(-t^{-3}-t^{-1}+1-t)=t^6\overline{V(\frac{1}{4})}$. 
\item[$(2)$]  When $\alpha =\frac{3}{10}$, by Lemma~\ref{1-2},  
$i\bigl( \frac{3}{10}\bigr) =\frac{7}{10},\ r\bigl( \frac{3}{10}\bigr) =\frac{7}{10},\ (ir)\bigl( \frac{3}{10}\bigr) =\frac{3}{10}$. 
Since the pair of parents of $\alpha$ is $(\frac{2}{7}, \frac{1}{3})$, by Theorem~\ref{4-6}(2), we see that 
$$\Bigl( V(\alpha ),\  V\bigl(i(\alpha )\bigr) ,\ V\bigl(r(\alpha )\bigr) ,\ V\bigl((ir)(\alpha )\bigr) \Bigr) 
=\Bigl( V(\alpha ),\ \overline{V_{+-}(\alpha ) },\ V(\alpha ) ,\ \overline{V_{+-}(\alpha )} \Bigr) .$$ 
Since $\alpha =[0, 3, 3]$ and $i(\alpha )=[0, 1, 2, 3]$, $\mathrm{wr}(\alpha )=\mathrm{wr}(i(\alpha ))=6$. 
So, $\mathrm{wr}(\alpha )+\mathrm{wr}(i(\alpha ))=12$ and 
$V\bigl(\frac{3}{10}\bigr) =t^{\frac{9}{2}}(-t^3+t^2-2t+2-2t^{-1}+t^{-2}-t^{-3})$. 
In this case $V(\frac{7}{10})=t^9\overline{V(\frac{3}{10})}=V(\frac{3}{10})$, and 
hence 
$$V(\varGamma _{\frac{3}{10}})\equiv t^{\frac{9}{2}}(-t^3+t^2-2t+2-2t^{-1}+t^{-2}-t^{-3}). $$
The result $V(\frac{7}{10})=V(\frac{3}{10})$ is confirmed by $3\cdot 7\equiv 1\ (\mathrm{mod}\ 2\cdot 10)$. 
Because, by Schubert's classification theorem for the rational links with orientation \cite{Schubert}, 
the congruent equation implies that two oriented links $D(T(\frac{3}{10}))$ and $D(T(\frac{7}{10}))$ are isotopic. 
\item[$(3)$] When $\alpha =\frac{3}{14}$, by Lemma~\ref{1-2},  
$i\bigl( \frac{3}{14}\bigr) =\frac{11}{14},\ r\bigl( \frac{3}{14}\bigr) =\frac{5}{14},\ (ir)\bigl( \frac{3}{14}\bigr) =\frac{9}{14}$. 
Since the pair of parents of $\alpha$ is $(\frac{1}{5}, \frac{2}{9})$, by Theorem~\ref{4-6}(2), we see that 
$$\Bigl( V(\alpha ),\  V\bigl(i(\alpha )\bigr) ,\ V\bigl(r(\alpha )\bigr) ,\ V\bigl((ir)(\alpha )\bigr) \Bigr) 
=\Bigl( V(\alpha ),\ \overline{V_{+-}(\alpha ) },\ V_{+-}(\alpha ) ,\ \overline{V(\alpha )} \Bigr) .$$ 
Since $\alpha =[0, 4, 1, 2]$ and $i(\alpha )=[0, 1, 3, 1, 2]$, $\mathrm{wr}(\alpha )=1,\ \mathrm{wr}(i(\alpha ))=3$. 
So, $\mathrm{wr}(\alpha )+\mathrm{wr}(i(\alpha ))=4$. 
By computation we have 
\begin{align*}
V\Bigl(\frac{3}{14}\Bigr) &=t^{-\frac{3}{2}}(-t^5+t^4-2t^3+2t^2-3t+2-2t^{-1}+t^{-2}), \\ 
V\Bigl(\frac{11}{14}\Bigr) &=t^{\frac{9}{2}}(t^2-2t+2-3t^{-1}+2t^{-2}-2t^{-3}+t^{-4}-t^{-5}). 
\end{align*}
Thus 
$$V\Bigl(\frac{11}{14}\Bigr) =t^{3}\overline{V\Bigl(\frac{3}{14}\Bigr) },\quad 
V\Bigl(\frac{5}{14}\Bigr) =t^{-3}V\Bigl(\frac{3}{14}\Bigr) , \quad 
V\Bigl(\frac{9}{14}\Bigr) =\overline{V\Bigl(\frac{3}{14}\Bigr) }, $$
and 
$$V(\varGamma _{\frac{3}{14}})
\equiv V\Bigl(\frac{3}{14}\Bigr) 
\equiv t^3\overline{V\Bigl(\frac{3}{14}\Bigr) }
\equiv t^{-3}V\Bigl(\frac{3}{14}\Bigr) 
\equiv \overline{V\Bigl(\frac{3}{14}\Bigr) }.$$
\end{enumerate}
\end{exam} 

\par \medskip 
Via the CCFs of zigzag type, we can recognize the following phenomena on the Jones polynomials for rational links. 

\begin{rem}
It is known that there are four pairs of rational knots with less than or equal to $12$ crossings such that their Jones polynomials are the same up to replacing $t$ with $t^{-1}$. 
The pairs are given as follows. 
\begin{enumerate}
\item[$(1)$] $\{ D(T(\frac{29}{49})),\ D(T(\frac{36}{49})) \}$, 
\item[$(2)$] $\{ D(T(\frac{19}{81})),\ D(T(\frac{37}{81}))\}$, 
\item[$(3)$] $\{ D(T(\frac{32}{121})),\ D(T(\frac{43}{121}))\}$, 
\item[$(4)$] $\{ D(T(\frac{64}{147})),\ D(T(\frac{104}{147})) \}$. 
\end{enumerate}

Viewing the corresponding CCFs, we notice that any pair $\{ D(T(\alpha )),\ D(T(\beta ))\}$ of them  has a common characteristic such as $n(\alpha )-n((ir)(\alpha ))=n(r(\alpha ))-n(i(\alpha ))=n(\beta )-n((ir)(\beta ))=n(r(\beta ))-n(i(\beta ))=\pm 2$, and 
$n(\alpha )-n(r(\alpha ))$, $n(\beta )-n(r(\beta ))$ can be divided by any prime factor of $d(\alpha )=d(\beta )$, 
where $n(\alpha )$ and $d(\alpha )$ stand for the numerator and the denominator of $\alpha$. 

\par \medskip \centerline 
{$\begin{matrix}\begin{array}{ccc}
29  &     & 22  \\[0.15cm]
  & 49 &      \\[0.15cm]
27 &      &  20  
\end{array}
\end{matrix}$ \ $\left  | \vbox to 30pt{ }\right. $\ 
$\begin{matrix}\begin{array}{ccc}
36  &     & 15 \\[0.15cm]
     & 49 &      \\[0.15cm]
34 &      &  13
\end{array}
\end{matrix}$\ $\left  | \vbox to 30pt{ }\right. $\ 
$\begin{matrix}\begin{array}{ccc}
19 &     & 64\\[0.15cm]
     & 81 &      \\[0.15cm]
17 &      &  62
\end{array}
\end{matrix}$\ $\left  | \vbox to 30pt{ }\right. $\ 
$\begin{matrix}\begin{array}{ccc}
37  &     & 46\\[0.15cm]
     & 81 &      \\[0.15cm]
35 &      & 44
\end{array}
\end{matrix}$}
\par \medskip \centerline 
{$\begin{matrix}\begin{array}{ccc}
32  &     & 87  \\[0.15cm]
  & 121 &      \\[0.15cm]
34 &      &  89  
\end{array}
\end{matrix}$ \ $\left  | \vbox to 30pt{ }\right. $\ 
$\begin{matrix}\begin{array}{ccc}
43  &     & 76 \\[0.15cm]
     & 121 &      \\[0.15cm]
45 &      &  78
\end{array}
\end{matrix}$\ $\left  | \vbox to 30pt{ }\right. $\ 
$\begin{matrix}\begin{array}{ccc}
64  &     & 85\\[0.15cm]
     & 147 &      \\[0.15cm]
62 &      &  83
\end{array}
\end{matrix}$\ $\left  | \vbox to 30pt{ }\right. $\ 
$\begin{matrix}\begin{array}{ccc}
106  &     & 43\\[0.15cm]
     & 147 &      \\[0.15cm]
104 &      & 41
\end{array}
\end{matrix}$}
\par \medskip 
Further development will be appeared in a forthcoming paper. 
\end{rem}

\section{A recurrence formula of the writhe of a rational link diagram in terms of continued fractions}
\par 
Let us consider a rational number $\alpha $ in $(0,1)$, and its Yamada's ancestor triangle $\text{YAT}(\alpha )$ (see \cite{Kogiso-Wakui, Yamada-Proceeding} for the precise definition and details). 
We write $\alpha $ in the continued fraction form $\alpha =[0, a_1, \ldots , a_n]$. 
Then, there is a unique downward path in $\text{YAT}(\alpha )$, which is started from $0$ to $\alpha $, and 
is passing through the vertices 
$$[0],\ [0, a_1],\ [0, a_1, a_2],\ \ldots ,\ [0, a_1, a_2, \ldots , a_n].$$
We call the path the \textit{continued fraction path} associated with $\alpha $. 
By the continued fraction path $\text{YAT}(\alpha )$ is divided into $n$ triangles, which are named as $\Delta_1, \ldots, \Delta_n$ from the top. 
We note that if $j$ is odd, then the vertex corresponding to $[0, a_1, \ldots , a_j]$ is on the right oblique line, and otherwise it is on the left. 

\par \smallskip 
\begin{exam} 
For $\alpha =\frac{3}{8}=[0,2,1,2]$, $[0]=\frac{0}{1},\ [0,2]=\frac{1}{2},\ [0,2,1]=\frac{1}{3}$. 

\begin{figure}[htbp]
\centering\includegraphics[height=4cm]{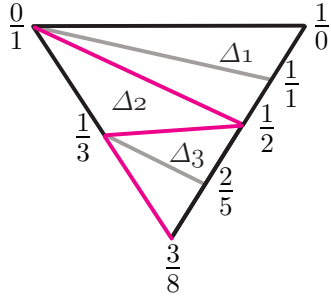}
\caption{the continued fraction path of $\frac{3}{8}$}
\label{fig4}
\end{figure}

Thus, the continued fraction path of 
$\frac{3}{8}$ is the path $\frac{0}{1}\to \frac{1}{2}\to \frac{1}{3}\to \frac{3}{8}$. See Figure~\ref{fig4}. 
\end{exam}

\par \smallskip 
\begin{thm}\label{5-2}
For each $j\in \{ 1, \ldots , n\}$ let $t_{\alpha}(\Delta _j)$ be the sign of $\Delta _j$ defined in Remark~\ref{4-4}, and set $\alpha _j=[0, a_1, \ldots , a_j]$. 
\par 
In the case where $n=2$, 
$$t_{\alpha }(\Delta _2)=(-1)^{a_1},\ t_{\alpha }(\Delta _1)=(-1)^{a_1a_2+a_2+1}. $$
In the case where $n\geq 3$, 
\begin{enumerate}
\item[$(1)$] if $\alpha _{n-2}$ is $\frac{1}{1}$-type and $\alpha _{n-1}$ is $\frac{1}{0}$-type, then 
$$t_{\alpha}(\Delta_j)=\begin{cases}
(-1)^{d(\alpha _j)(a_n-1)}t_{\alpha_{n-1}}(\Delta_j)\  & (j=1,2,\ldots , n-1),\\ 
t_{\alpha_{n-1}}(\Delta _{n-1})\  & (j=n).
\end{cases}$$
\item[$(2)$] if $\alpha _{n-2}$ is $\frac{0}{1}$-type and $\alpha _{n-1}$ is $\frac{1}{0}$-type, then
$$t_{\alpha}(\Delta_j)=\begin{cases}
(-1)^{d(\alpha _j)a_n}t_{\alpha_{n-1}}(\Delta_j)\  & (j=1,2,\ldots , n-1),\\ 
-t_{\alpha_{n-1}}(\Delta _{n-1})\  & (j=n).
\end{cases}$$
\item[$(3)$] if $\alpha _{n-2}$ is $\frac{1}{0}$-type and $\alpha _{n-1}$ is $\frac{0}{1}$-type, then
\begin{enumerate}
\item[$\bullet$] if $a_n$ is even, then 
$$t_{\alpha}(\Delta_j)=\begin{cases}
t_{\alpha_{n-1}}(\Delta_j)\  & (j=1,2,\ldots , n-1),\\ 
-t_{\alpha_{n-1}}(\Delta _{n-1})\  & (j=n).
\end{cases}$$
\item[$\bullet$] if $a_n$ is odd, then
$$t_{\alpha}(\Delta_j)=\begin{cases}
(-1)^{d(\alpha _j)}t_{\alpha_{n-2}}(\Delta_j)\  & (j=1,2,\ldots , n-2),\\ 
(-1)^{a_{n-1}-1}t_{\alpha_{n-2}}(\Delta_{n-2})\  & (j=n-1, n).
\end{cases}$$
\end{enumerate}
\item[$(4)$] if $\alpha _{n-2}$ is $\frac{1}{1}$-type and $\alpha _{n-1}$ is $\frac{0}{1}$-type, then 
\begin{enumerate}
\item[$\bullet$] if $a_n$ is even, then 
$$t_{\alpha}(\Delta_j)=\begin{cases}
t_{\alpha_{n-2}}(\Delta_j)\  & (j=1,2,\ldots , n-2),\\ 
(-1)^{a_{n-1}}t_{\alpha_{n-2}}(\Delta_{l-2})\  & (j=n-1, n).
\end{cases}$$
\item[$\bullet$] if $a_n$ is odd, then 
$$t_{\alpha}(\Delta_j)=\begin{cases}
t_{\alpha_{n-1}}(\Delta_j)\  & (j=1,2,\ldots , n-1),\\ 
t_{\alpha_{n-1}}(\Delta _{n-1})\  & (j=n).
\end{cases}$$
\end{enumerate}
\item[$(5)$] if $\alpha _{n-2}$ is $\frac{0}{1}$-type and $\alpha _{n-1}$ is $\frac{1}{1}$-type, then 
$$t_{\alpha}(\Delta_j)=\begin{cases}
t_{\alpha_{n-2}}(\Delta_j)\  & (j=1,2,\ldots , n-2),\\ 
(-1)^{a_{n-1}}t_{\alpha_{n-2}}(\Delta_{l-2})\  & (j=n-1, n).
\end{cases}$$
\item[$(6)$] if $\alpha _{n-2}$ is $\frac{1}{0}$-type and $\alpha _{n-1}$ is $\frac{1}{1}$-type, then 
$$t_{\alpha}(\Delta_j)=\begin{cases}
t_{\alpha_{n-2}}(\Delta_j)\  & (j=1,2,\ldots , n-2),\\ 
(-1)^{a_{n-1}}t_{\alpha_{n-2}}(\Delta_{n-2})\  & (j=n-1, n).
\end{cases}$$
\end{enumerate} 
Here, $d(\alpha _j)\in \{0,1\}$ is the denominator of the type of $\alpha _j$. 
\end{thm}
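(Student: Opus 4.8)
The plan is to establish the recurrence directly from the recursive definition of the signs recalled in Remark~\ref{4-4}: the initial sign $t_{\alpha}(\Delta_1)$ is fixed by the type of $\alpha$, while for $j\geq 2$ one has $t_{\alpha}(\Delta_j)=-t_{\alpha}(\Delta_{j-1})$ exactly when the Seifert path of $\alpha$ crosses the common edge of $\Delta_{j-1}$ and $\Delta_j$, and $t_{\alpha}(\Delta_j)=t_{\alpha}(\Delta_{j-1})$ otherwise. Hence the whole sign sequence is encoded by (i) the type of $\alpha$ and (ii) the crossing pattern of the Seifert path along the continued fraction path. To prove the stated relations it suffices to compare these two pieces of data for $\alpha=[0,a_1,\ldots,a_n]$ with those for its truncation $\alpha_{n-1}=[0,a_1,\ldots,a_{n-1}]$ (or $\alpha_{n-2}$), which is exactly what the right-hand sides of the formulas use.

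First I would record that the six cases (1)--(6) are exhaustive. Since $\alpha_{n-2}$ and $\alpha_{n-1}$ are consecutive convergents they are Farey neighbours, and by the parity constraint $qr-ps=1$ two fractions of the same type can never be Farey neighbours; therefore $\alpha_{n-2}$ and $\alpha_{n-1}$ always have distinct types, and the six ordered pairs of distinct types among $\frac{0}{1},\frac{1}{0},\frac{1}{1}$ are precisely the listed cases. Using the convergent parity recurrence $p_n\equiv a_np_{n-1}+p_{n-2}$, $q_n\equiv a_nq_{n-1}+q_{n-2}\ (\mathrm{mod}\ 2)$ (the same recurrence stated in Section~1 for $n(\alpha_i),d(\alpha_i)$), the type of $\alpha$ is determined in each case by the types of $\alpha_{n-1},\alpha_{n-2}$ and the parity of $a_n$; reading off $t_{\alpha}(\Delta_1)$ from Remark~\ref{4-4} then settles the relation at $j=1$.

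The core of the argument is the comparison of the Seifert paths. The portion of $\mathrm{YAT}(\alpha)$ swept out by the initial segment $[0]\to\cdots\to\alpha_{n-1}$ of the continued fraction path is naturally identified with $\mathrm{YAT}(\alpha_{n-1})$, so its triangles $\Delta_1,\ldots,\Delta_{n-1}$ match those of the truncation; the only new feature is the fan of triangles produced by the final $a_n$ steps. I would show that, at the edge separating $\Delta_{j-1}$ from $\Delta_j$, whether the Seifert path of $\alpha$ crosses differs from the corresponding decision for the truncation by a factor controlled by the denominator parity $d(\alpha_j)$ and the parity of $a_n$; this is exactly the source of the factor $(-1)^{d(\alpha_j)(a_n-1)}$ in case (1), and of the analogous factors elsewhere. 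Lemma~\ref{1-5}, together with the parity recurrences for $n(\alpha_j)$ and $d(\alpha_j)$, provides the bookkeeping that converts statements about the parities of $N_0(a_1,\ldots,a_n)$ and of the parents $x,y,r,s$ into statements about these crossings.

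The hardest part will be the behaviour of the Seifert path inside the newly attached $a_n$-fan, that is, in the two bottom triangles $\Delta_{n-1}$ and $\Delta_n$, because this is where the path either closes up compatibly with the convergent $\alpha_{n-1}$ or is forced to refer back to $\alpha_{n-2}$. This dichotomy is precisely what decides whether the formula is expressed through $t_{\alpha_{n-1}}$ or $t_{\alpha_{n-2}}$, and the even/odd split of $a_n$ in cases (3) and (4) reflects whether the terminal vertex $\alpha$ lands on the same oblique line as $\alpha_{n-1}$ or on the opposite one, which governs the side on which the path leaves the fan. I would close by checking the base case $n=2$ by direct inspection of $\mathrm{YAT}(\alpha)$ --- a single fan over one preceding triangle --- reading off $t_{\alpha}(\Delta_2)=(-1)^{a_1}$ and $t_{\alpha}(\Delta_1)=(-1)^{a_1a_2+a_2+1}$ from the Seifert path, and then assembling the general statement from the initial-sign comparison and the crossing-flip count established above.
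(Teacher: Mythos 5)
Your proposal follows essentially the same route as the paper: verify $n=2$ directly, observe that consecutive convergents are Farey neighbours of distinct types so the six cases are exhaustive, and then in each case compare the Seifert path of $\alpha$ with that of the truncation $\alpha_{n-1}$ (or $\alpha_{n-2}$), the $d(\alpha_j)$-dependent sign corrections arising precisely from the two-Seifert-path ambiguity when a convergent has even denominator --- which the paper formalizes via the auxiliary sign $t'_{\alpha}$ and the correction factors $\epsilon_j$. The level of detail you defer to the case analysis is comparable to the paper's, which itself only works out case (1) and declares the remaining cases analogous.
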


To prove the theorem let us recall the definition of a Seifert path, which is introduced by Nagai and Terashima \cite{Nagai_Terashima}. 
Let $\alpha $ be a rational number in $(0,1)$. 
Every vertex in the Yamada's ancestor triangle $\text{YAT}(\alpha )$ is one of the $\frac{1}{1}, \frac{1}{0}, \frac{0}{1}$-types. 
A \textit{Seifert path} of $\alpha $ is a downward path in $\text{YAT}(\alpha )$, which is started from $\frac{1}{0}$ to $\alpha $ satisfying the following condition: 
The end points of any edge in the path consist of $\frac{1}{1}$- and $\frac{1}{0}$-types, or consist of $\frac{1}{0}$- and $\frac{0}{1}$-types. 
If the denominator of $\alpha $ is odd, then a Seifert path is uniquely determined. 
We denote the Seifert path by $\gamma _{\alpha }$. 
If the denominator of $\alpha $ is even, namely $\alpha $ is of type $\frac{1}{0}$, then there are exactly two Seifert paths. 
In this case we denote by $\gamma _{\alpha }$ the Seifert path whose vertices consist of $\frac{1}{0}$- and $\frac{0}{1}$-types, 
and denote by $\gamma _{\alpha }^{\prime}$ the remaining Seifert path. 
As a similar to $t_{\alpha }(\Delta_j)$, we define a sign $t_{\alpha }^{\prime}(\Delta_j)$ by the following inductive rules. 
\begin{enumerate}
\item[$\bullet$] at first, set $t_{\alpha }^{\prime}(\Delta_1):=-1$, and 
\item[$\bullet$] after $t_{\alpha }^{\prime}(\Delta_j)$ is defined, set 
$$t_{\alpha }^{\prime}(\Delta_{j+1}):=\begin{cases}
t_{\alpha }^{\prime}(\Delta_j) & \text{if there is no edge in $\gamma _{\alpha}^{\prime}$ between $\Delta _j$ and $\Delta _{j+1}$}, \\ 
-t_{\alpha }^{\prime}(\Delta_j) & \text{otherwise}. 
\end{cases}$$
\end{enumerate}

We remark that 
$t_{\alpha }^{\prime}(\Delta_j)=\epsilon _jt_{\alpha }(\Delta_j)$ holds for $j=1,2,\ldots ,n$, where 
$$\epsilon _1=1,\quad \ \  
\epsilon _j=\begin{cases}
1 & \text{if the denominator of $\alpha_{j-1}$ is odd},\\ 
-1 & \text{otherwise}. 
\end{cases}$$

\medskip \noindent 
{\bf Proof of Theorem~\ref{5-2}}
\par 
It can be easily verified in the case where $n=2$. 
So, we consider the case where $n\geq 3$. 
In this case, the statement can be shown by case-by-case argument. 
We only demonstrate the proof of Part (1) since other cases are verified by a quite similar argument. 
\par 
Consider the case where $a_n$ is even. 
Then the Seifert path $\gamma_{\alpha}$ is obtained by connecting $\gamma_{\alpha_{n-1}}^{\prime}$ with the edges between $\alpha_{n-1}$ and $\alpha$.  

\begin{align*}
\includegraphics[height=4.5cm]{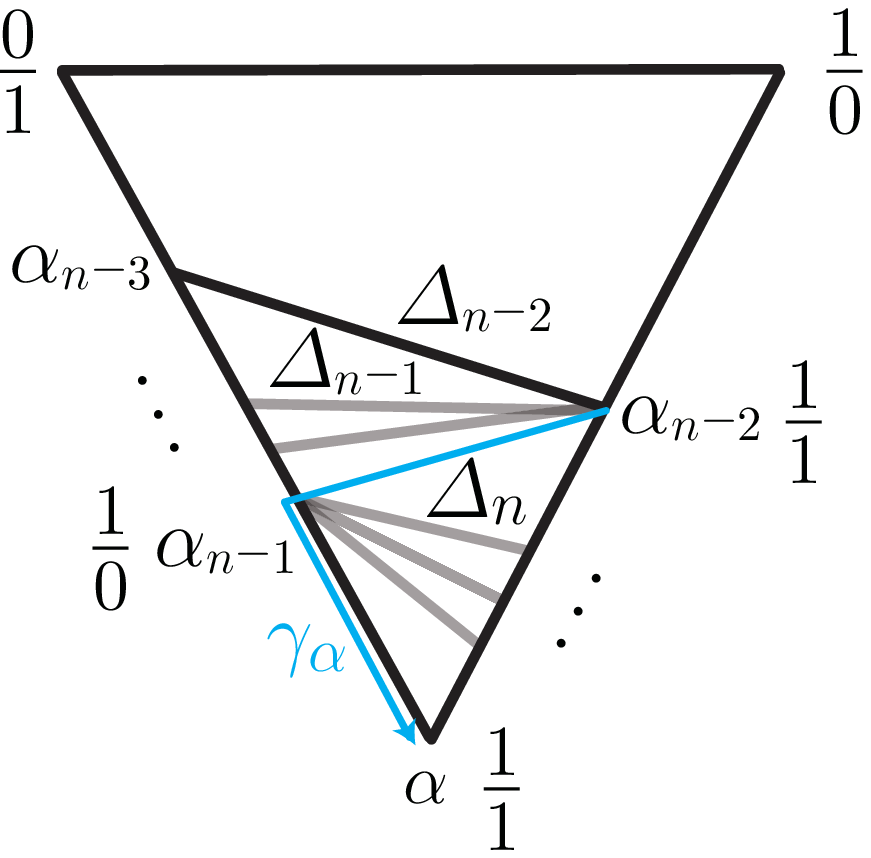} & \hspace{1cm}
\includegraphics[height=4.5cm]{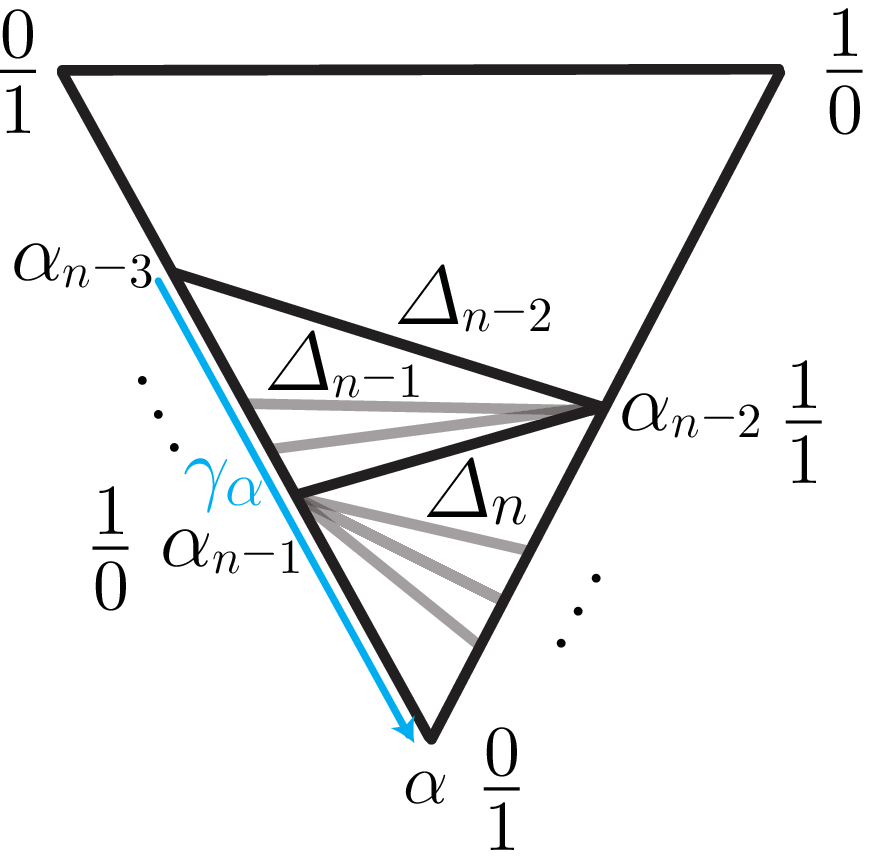} \\ 
\hspace{1cm} \text{$c_l$ is even} \hspace{1.5cm} & \hspace{2.5cm} \text{$c_l$ is odd}
\end{align*}

Since there is an edge in $\gamma_{\alpha}$ between $\Delta_{n-1}$ and $\Delta_n$, 
we have $t_{\alpha }(\Delta _n)=-t_{\alpha }(\Delta _{n-1})$. 
Moreover, for all $j=1,2,\ldots , n-2$, we see that 
$$t_{\alpha}(\Delta_j)=-t_{\alpha}(\Delta_{j+1})\ \ \Longleftrightarrow \ \ t_{\alpha_{n-1}}^{\prime}(\Delta_j)=-t_{\alpha_{n-1}}^{\prime}(\Delta_{j+1}).$$
Since $\alpha$ is $\frac{1}{1}$-type whereas $\alpha_{n-1}$ is $\frac{1}{0}$-type, 
$t_{\alpha}(\Delta_1)=1$ and $t_{\alpha_{n-1}}^{\prime}(\Delta_1)=-1$. 
Thus, 
$$t_{\alpha}(\Delta_j)=-t_{\alpha_{n-1}}^{\prime}(\Delta_j)=-\epsilon_jt_{\alpha_{n-1}}(\Delta_j)
$$
for all $j=1,2,\ldots , n-1$. 
Since $\alpha _{n-2}$ is $\frac{1}{1}$-type, 
we have 
$t_{\alpha}(\Delta_{n-1})=-t_{\alpha_{n-1}}(\Delta _{n-1})$, and hence 
$t_{\alpha }(\Delta _n)=-t_{\alpha }(\Delta _{n-1})=t_{\alpha_{n-1}}(\Delta _{n-1})$. 
It follows that 
$$t_{\alpha}(\Delta_j)=\begin{cases}
-\epsilon_jt_{\alpha_{n-1}}(\Delta_j)\  & (j=1,2,\ldots , n-1),\\ 
t_{\alpha_{n-1}}(\Delta _{n-1})\  & (j=n).
\end{cases}$$
\par 
Consider the case where $a_n$ is odd. 
Then $\gamma_{\alpha}$ is obtained by connecting $\gamma_{\alpha_{n-1}}$ with the edges between $\alpha_{n-1}$ and $\alpha$. 
In this case, since there is no edge in $\gamma_{\alpha}$ between $\Delta_{n-1}$ and $\Delta_n$, we have $t_{\alpha }(\Delta _n)=t_{\alpha }(\Delta _{n-1})$. 
Since $\alpha$ is $\frac{0}{1}$-type and $\alpha_{n-1}$ is $\frac{1}{0}$-type, 
it follows that 
$t_{\alpha}(\Delta_1)=-1=t_{\alpha_{n-1}}(\Delta_1)$. 
So, by the same argument above, we see that 
$$t_{\alpha}(\Delta_j)=t_{\alpha_{n-1}}(\Delta_j)
$$
for all $j=1,2,\ldots , n-1$. 
In particular, 
$t_{\alpha}(\Delta_{n-1})=t_{\alpha_{n-1}}(\Delta _{n-1})$. Thus 
$t_{\alpha }(\Delta _n)=t_{\alpha }(\Delta _{n-1})=t_{\alpha_{n-1}}(\Delta _{n-1})$, and therefore, 
$$t_{\alpha}(\Delta_j)=\begin{cases}
t_{\alpha_{n-1}}(\Delta_j)\  & (j=1,2,\ldots , n-1),\\ 
t_{\alpha_{n-1}}(\Delta _{n-1})\  & (j=n).
\end{cases}$$
\qed 

\par \medskip 
By induction argument we have the following corollary from Theorem~\ref{5-2}. 

\par \smallskip 
\begin{cor}\label{5-3}
Under the same notation with Theorem~\ref{5-2}, 
the sign $t_{\alpha }(\Delta _n)$ of the top triangle $\Delta _n$ is given by the formula: 
$$t_{\alpha }(\Delta _n)=(-1)^{(d(\alpha _n)+1)n(\alpha_{n-1})+d(\alpha_n)n(\alpha _{n-1})+n}, $$
where, $d(\alpha _j), n(\alpha _j)\in \{0,1\}$ are the denominator and the numerator of the type of $\alpha _j$, respectively. 
\end{cor}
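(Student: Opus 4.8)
The plan is to prove the closed formula by induction on the length $n$ of the continued fraction expansion $\alpha=[0,a_1,\ldots,a_n]$, feeding the recursive rules of Theorem~\ref{5-2} into the induction. The quantity to control is the sign of the \emph{top} triangle, i.e.\ the case $j=n$ of each rule. Inspecting those rules, the top sign $t_{\alpha}(\Delta_n)$ is always expressed either through $t_{\alpha_{n-1}}(\Delta_{n-1})$, the top sign of the shorter fraction $\alpha_{n-1}$ (as in Parts (1), (2), (3a) and (4b)), or through $t_{\alpha_{n-2}}(\Delta_{n-2})$, the top sign of $\alpha_{n-2}$ (as in Parts (3b), (4a), (5) and (6)), possibly multiplied by a sign of the form $(-1)^{a_{n-1}}$ or $(-1)^{a_{n-1}-1}$. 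Since the recursion drops the length by one or by two, I would run a two-step induction, taking as base cases $n=1$, where $t_{\alpha}(\Delta_1)$ is read off directly from its definition in Remark~\ref{4-4} according to the type of $\alpha=\tfrac1{a_1}$, and $n=2$, where the explicit values $t_{\alpha}(\Delta_2)=(-1)^{a_1}$ and $t_{\alpha}(\Delta_1)=(-1)^{a_1a_2+a_2+1}$ from Theorem~\ref{5-2} are available.

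For the inductive step I would treat the six type configurations of $(\alpha_{n-2},\alpha_{n-1})$ separately, exactly as they are organized in Theorem~\ref{5-2}. In each configuration I substitute the induction hypothesis for $t_{\alpha_{n-1}}(\Delta_{n-1})$ or $t_{\alpha_{n-2}}(\Delta_{n-2})$ and then reconcile the exponents modulo $2$. The essential tool here is the pair of parity recurrences stated just before Lemma~\ref{1-5}, namely
\[
n(\alpha_n)=n(\alpha_{n-2})+\tfrac{1-(-1)^{a_n}}{2}\,n(\alpha_{n-1}),\qquad
d(\alpha_n)=d(\alpha_{n-2})+\tfrac{1-(-1)^{a_n}}{2}\,d(\alpha_{n-1})\pmod 2,
\]
which let me rewrite $n(\alpha_n)$ and $d(\alpha_n)$ in terms of the data of $\alpha_{n-1}$ and $\alpha_{n-2}$ together with the parity of $a_n$. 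I would also exploit the fact that within the ancestor triangle the types of three consecutive vertices $\alpha_{n-2},\alpha_{n-1},\alpha_n$ are heavily constrained (each of the six configurations forces a definite type for $\alpha_n$), so that in every case the difference between the target exponent for $\alpha$ and the inductively known exponent for $\alpha_{n-1}$ or $\alpha_{n-2}$ collapses to a single controllable parity.

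The main obstacle will be purely the parity bookkeeping in the inductive step: one must check, case by case, that the explicit extra factors $(-1)^{a_{n-1}}$ and $(-1)^{a_{n-1}-1}$ and the length shift by $1$ or $2$ combine with the changes in $n(\cdot)$ and $d(\cdot)$ so as to reproduce the stated exponent, with no leftover sign. The cases where the length drops by two (Parts (3b), (4a), (5) and (6)) are the delicate ones, since there the induction hypothesis is applied to $\alpha_{n-2}$ and one must simultaneously account for the intermediate vertex $\alpha_{n-1}$ both through its contribution $a_{n-1}$ to the sign and through the parity recurrences above. Once each of the six configurations is verified, collecting them yields the single uniform closed formula for $t_{\alpha}(\Delta_n)$, completing the induction.
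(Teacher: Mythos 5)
Your plan coincides with the paper's own proof, which consists solely of the remark that the corollary follows ``by induction argument'' from Theorem~\ref{5-2}; the two-step induction on $n$ through the six type configurations, anchored at the explicit $n=1,2$ values and reconciled via the parity recurrences for $n(\alpha_j)$ and $d(\alpha_j)$, is exactly the elaboration the paper leaves implicit. No divergence in approach.
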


\par \smallskip 
\begin{exam}\label{5-4}
\begin{enumerate}
\item[$(1)$] Consider the case where $\alpha =\frac{3}{8}=[0, 2, 1, 2]$. 
We set 
$$\alpha _1:=[0, 2]=\dfrac{1}{2},\qquad \alpha _2:=[0, 2, 1]=\dfrac{1}{3}\qquad \alpha _3:=[0, 2, 1, 2]=\dfrac{3}{8},$$
and let $\{ \Delta _1, \Delta _2, \Delta _3\} $ be the corresponding triangle sequence for $\alpha $. 
Since $\alpha _1$ is $\frac{1}{0}$-type and $\alpha _2$ is $\frac{1}{1}$-type, it follows from Theorem~\ref{5-2}(6) that 
\begin{align*}
t_{\alpha }(\Delta _3)&=t_{\alpha }(\Delta _2)=(-1)^{2-1}t_{\alpha _1}(\Delta _1)=-t_{\alpha _1}(\Delta _1),\\ 
t_{\alpha }(\Delta _1)&=t_{\alpha _1}(\Delta _1). 
\end{align*}
By Theorem~\ref{5-2} again, we have 
$t_{\alpha _1}(\Delta _1)=(-1)^{2+1}=-1$, and therefore 
$$t_{\alpha }(\Delta _3)=t_{\alpha }(\Delta _2)=1,\ t_{\alpha }(\Delta _1)=-1.$$
By using the formula \eqref{eq4.14} we see that the writhe of the oriented diagram $D(T(\frac{3}{8}))$ is given by 
$\mathrm{wr}\bigl(\frac{3}{8}\bigr) =-\bigl( (-1)\cdot 2+1\cdot 1+1\cdot 2\bigr) =-1$. 
\item[$(2)$] Consider the case where $\alpha =\frac{8}{11}=[0, 1, 2, 1, 2]$. 
We set 
$$\alpha _1:=[0, 1]=\dfrac{1}{1},\quad \alpha _2:=[0, 1, 2]=\dfrac{2}{3}\quad \alpha _3:=[0, 1, 2, 1]=\dfrac{3}{4}\quad \alpha _4:=[0, 1, 2, 1, 2]=\dfrac{8}{11},$$
and let $\{ \Delta _1, \Delta _2, \Delta _3, \Delta _4\} $ be the corresponding triangle sequence for $\alpha$. 
Since $\alpha _2$ is $\frac{0}{1}$-type and $\alpha _3$ is $\frac{1}{0}$-type, it follows from Theorem~\ref{5-2}(2) that 
\begin{align*}
t_{\alpha }(\Delta _4)&=-t_{\alpha _3}(\Delta _3),\\ 
t_{\alpha }(\Delta _j)&=(-\epsilon_j)^2 t_{\alpha _3}(\Delta _j)=t_{\alpha _3}(\Delta _j)
\end{align*}
for $j=1,2,3$. 
In addition, $\alpha _1$ is $\frac{1}{1}$-type and $c_3=1$ is odd. 
Thus, applying Theorem~\ref{5-2}(4) we have 
\begin{align*}
t_{\alpha _3}(\Delta _3)&=t_{\alpha _2}(\Delta _2),\\ 
t_{\alpha _3}(\Delta _j)&=t_{\alpha _2}(\Delta _j)
\end{align*}
for $j=1,2$. Since 
$t_{\alpha _2}(\Delta _2)=(-1)^1=-1,\ t_{\alpha _2}(\Delta _1)=(-1)^{2+3}=-1$, we
see that 
\begin{align*}
t_{\alpha }(\Delta _4)&=-t_{\alpha _3}(\Delta _3)=-t_{\alpha _2}(\Delta _2)=1,\\ 
t_{\alpha }(\Delta _3)&=t_{\alpha _3}(\Delta _3)=t_{\alpha _2}(\Delta _2)=-1,\\ 
t_{\alpha }(\Delta _2)&=t_{\alpha _3}(\Delta _2)=t_{\alpha _2}(\Delta _2)=-1,\\ 
t_{\alpha }(\Delta _1)&=t_{\alpha _3}(\Delta _1)=t_{\alpha _2}(\Delta _1)=-1. 
\end{align*}
By using the formula \eqref{eq4.14} we see that the writhe of the oriented diagram $D(T(\frac{8}{11}))$ is given by 
$\mathrm{wr}\bigl(\frac{8}{11}\bigr) =2$. 
\end{enumerate}
\end{exam}

\par \medskip \noindent 
{\bf Acknowledgments}. 
We would like to thank Professors Mikami Hirasawa, Makoto Sakuma, Yuji Terashima for many helpful comments.

\end{document}